\newtheorem{Definition}{Definition}
\newtheorem{Remark}{Remark}
\newtheorem{pro}{Proposition}
\newtheorem{lem}{Lemma}
\newtheorem{cor}{Corollary}
\newcommand{\G}{\mathbb{G}}
\newcommand{\R}{\mathbb{R}}
\newcommand{\ol}{\overline}
\renewcommand{\cal}{\mathcal}
\renewcommand{\(}{\left (}
\renewcommand{\)}{\right )}
\renewcommand{\diag}{\operatorname{diag}}
\newcommand{\rk}{\operatorname{rank}}
\newcommand{\SE}{{\rm SE}}
\newcommand{\wh}{\widehat}
\newcommand{\qed}{\hfill\blacksquare}
\title{Global Stabilization of Triangulated Formations}
\author{Xudong Chen, 
Mohamed-Ali Belabbas, 
Tamer Ba\c sar\thanks{X. Chen, M.-A. Belabbas, T. Ba\c sar are with the Coordinated Science Laboratory, University of Illinois at Urbana-Champaign, emails: \{xdchen, belabbas, basar1\}@illinois.edu}. Corresponding author: X. Chen. 
T. Ba\c sar was partly supported by the U.S. Air Force Office of Scientific Research (AFOSR) MURI grant FA9550-10-1-0573; M.-A. Belabbas was partly supported by NSF ECCS 13-07791 and NSF ECCS CAREER 13-51586; X. Chen was supported jointly by (AFOSR) MURI grant FA9550-10-1-0573 and by NSF ECCS CAREER 13-51586.
}
\begin{document}

\maketitle

\begin{abstract} Formation control deals with the design of decentralized control laws that stabilize mobile, autonomous agents at prescribed distances from each other. We call any configuration of the agents a target configuration if it  satisfies the inter-agent distance conditions. It is well-known that when the distance conditions are defined by a rigid graph, there is a finite number of target configurations modulo rotations and translations of the entire formation. We can thus recast the objective of formation control as stabilizing one or many of the target configurations. A major issue is that such control laws will also have  equilibria corresponding to configurations which do not meet the desired inter-agent distance conditions; we refer to these as undesirable configurations. The undesirable configurations become problematic if they are also stable. Designing decentralized control laws  whose  stable equilibria are all target configurations in the case of a general rigid graph is still an open problem. We provide here a new point of view on this problem, and propose a partial solution by exhibiting  a  class of rigid graphs and  control laws for which all stable equilibria are  target configurations. 
\end{abstract}

\begin{keywords}Formation control, Laman graphs, Gradient control systems, Global stabilization\end{keywords}

\begin{AMS}93C10, 93C85, 34H05  \end{AMS}

\section{Introduction}

The design of control laws stabilizing a group of mobile autonomous agents has raised a number of issues related to the number of  equilibria, their asymptotic stability properties and the level of decentralization  of the system. In rigidity-based, or distance-constrained, formation control, one assigns agents to the vertices of a  rigid graph 
and specifies the \emph{target distances} between the pairs of agents linked by  edges.  We refer to any configuration of the agents that satisfies these distance requirements as a target configuration. The rigidity of the graph thus ensures that there is a finite number of target configurations up to rotations and translations of the plane.  The goal of a  decentralized formation control law in this context is  to either locally or globally stabilize a subset of the target configurations. However, the decentralization constraints and geometry of the state-space make the appearance of ancillary, undesirable equilibria inevitable~\cite{AB2013TAC}. Thus, global stabilization of formation systems is obviously not achievable. However, if the undesired equilibria are all unstable, global stabilization is \emph{practically} achieved. We thus call a control law {\bf essentially stabilizing} if it stabilizes {\em only} target configurations. 

The literature on formation control has been enriched in the past few years. We mention some relevant results to our study here, and refer to references in these papers for a more extensive list. The relationship between the level of decentralization of the formation and the existence of essentially stabilizing control laws has been studied in~\cite{AB2013TAC}. There, it was shown that a particular information flow in a formation control system implied the existence of undesirable yet stable equilibria, regardless of the control used. In~\cite{AL2014ECC}, it was shown that one could not locally stabilize all target configurations for a class of directed formations.  Amongst positive results, it has been shown in~\cite{anderson2007control} that the triangle formation is essentially stabilizable; in~\cite{dimarogonas2008stability, dimarogonas2009further} that if the formation graph is a {\it tree} (though not rigid), then the target configurations are essentially stabilizable;    
and in~\cite{cao2008control,guo2010adaptive} that a class of \emph{acyclic directed} formations is similarly essentially stabilizable. We further refer to~\cite{chen2015decentralized,tian2013global,lin2014distributed} for other control models which have also addressed the issue of global convergence towards the target configurations. Finally, we mention that there are other notions of formation control, where the desired equilibrium configurations are  characterized not by target distances, but by bearings. This is referred to as \emph{bearing-constrained} formation control, as opposed to distance-constrained formation control. We refer the reader to~\cite{zhao2015bearing} for more information.

From these papers, we conclude that in general, the problem of characterizing the set of essentially stabilizing control laws is challenging; indeed, describing the  rigid graphs for which there exists an essentially stabilizing control law remains an open problem. The contribution of this paper is  to exhibit a class of \emph{undirected graphs}, termed {\it triangulated Laman graphs}, and an associated class of essentially stabilizing control laws, for which all stable equilibria are target configurations.  Moreover, we provide a new approach for investigating this type of problems, which might be of independent interest.  Our main result is to provide a formula for computing the signature of the Hessian of a class of potential functions for formation control. This formula allows us to consider subsystems of the formation independently, and overall makes the evaluation of the signature tractable. 

\paragraph{The model} We now describe the model under study in this paper in precise terms. Let $G=(V,E)$ be an undirected graph with vertex set $V:=\{1,\cdots,n\}$  and edge set $E$. Two vertices are said to be {\it neighbors} if there is an edge joining them. We denote by $\mathcal{N}_i$ the set of neighbors of vertex $i$.  Let $ x_i \in \R^2$, $i=1,\ldots, n$, be the coordinate of agent $i$. With a slight abuse of notation, we refer to agent $i$ as $x_i$. For every edge $(i,j) \in E$, we let $d_{ij}:= \| x_i- x_j\|$ be the Euclidean distance between agents $x_i$ and $x_j$, and denote by $\ol d_{ij}$ the corresponding {\bf target distance}. 
The equations of motion of the $n $ agents $ x_1,\cdots, x_ n$ in $\mathbb{R}^2$ are given by       
\begin{equation}\label{MODEL}
\dot{ x}_{i} = \sum_{j\in \mathcal{N}_i}u_{ij}\(d_{ij},\ol d_{ij}\) ( x_j- x_i),\hspace{10pt} \forall i \in V.
\end{equation}
Each function $u_{ij}(d_{ij},\ol d_{ij})$ is assumed to be continuously differentiable in both arguments. For a fixed $\ol d_{ij}>0$, the function $u_{ij}(d_{ij}, \ol d_{ij})$ takes the distance $d_{ij}$ as the state feedback. 
Often, the feedback control laws $u_{ij}$ are designed so that each $u_{ij}(\cdot, \ol d_{ij})$ has a unique zero at $\ol d_{ij}$, i.e., $u_{ij}(\ol d_{ij},\ol d_{ij}) = 0$. 
In other words, if all pairs of agents $ x_i$ and $ x_j$, with $(i,j)\in E$, reach their target distances $\ol d_{ij}$ (or equivalently, the agents reach a target configuration),  then the entire formation is at an equilibrium. However, the converse may not be true, i.e., given a rigid graph $G$, and a set of control laws $u(\cdot, \ol d_{ij})$, there may exist stable equilibria which are not target configurations; indeed, for a rigid graph $G$, it is not even clear whether there exist such control laws $u_{ij}(\cdot,\ol d_{ij})$ stabilizing only the target configurations.


It is known that system~\eqref{MODEL} is a gradient dynamics (an associated potential function will be given in Section~\S\ref{sec:section2}). We can thus rephrase our goal of obtaining an essentially stabilizing control law as one of designing a potential function whose local minima are all target configurations. 
We note here that this class of gradient formation control systems has also been investigated  from many other perspectives.  Questions concerning the level of interaction laws for organizing such systems~\cite{dimarogonas2008stability,dimarogonas2009further,krick2009, GP, XC2014ACC, chen2016swarm}, questions about system convergence~\cite{dimarogonas2008stability,dimarogonas2009further,XC2014ACC,chen2016swarm}, and questions about local stability~\cite{krick2009} and robustness~\cite{USZB,mou2014CDC} have all been treated to some degree for the case of gradient dynamics. 
These are in general difficult questions. For example, even counting the number of critical formations in system~\eqref{MODEL} is hard. A partial solution to this counting problem was provided in~\cite{BDO2014CT} in the one-dimensional case. We also refer to~\cite{UH2013E} for using Morse theoretic ideas for this purpose. 

The remainder of the paper is organized as follows. In Section~\S\ref{sec:section2}, we recall some known facts on convergence of the system and associated potential functions. We also introduce the class of triangulated Laman graphs and an associated class of triangulated formation systems. We then state the main result of the paper. Specifically, the main theorem  characterizes a set of essentially stabilizing control laws for the formation control system.  Sections~\S\ref{MICF} and~\S\ref{ssec:morsebott} are devoted to establishing the properties that are needed for proving the main theorem. A detailed organization of these two sections will be described following the statement of the main theorem. 
 We provide conclusions in Section~\S\ref{sec:conclusion}. The paper ends with an Appendix which presents a proof for one of the supporting propositions.


\section{Preliminaries and the main theorem}\label{sec:section2}

\subsection{The control laws and the system convergence}
Let $G=(V,E)$ be an undirected graph of~$n$ vertices. We define the  {\it configuration space}  $P_{G}$ associated with $G$ as follows:
\begin{equation*}
P_{G}:=\left\{( x_1,\cdots, x _n)\in \mathbb{R}^{2n} \mid  x_i\neq  x_j, \, \forall (i,j)\in E \right\}. 
\end{equation*} 
Equivalently, $P_{G}$ is the set of embeddings of the graph $G$ in $\mathbb{R}^2$ for which  vertices linked by an edge have distinct positions. We call a pair $(G,p)$, with $p\in P_G$, a {\bf framework}. Of course, the order in which the coordinates of the vertices appear in the vector $p$ is not important. Hence, we  use the notation $p|_i$, for $i\in V$, to refer to the coordinates of the vertex $i$ in the embedding of $G$ given by $p$. Let $G' = \(V',E' \)$ be a sub-graph of $G$. We call an element $p' \in P_{G'}$ a {\bf sub-configuration} of an element $p \in P_G$ induced by $G'$ if $p' |_i=p |_i$ for all $i \in V'$.

We now formalize the type of systems introduced in~\eqref{MODEL} as follows:

\begin{Definition}[Graph induced systems]\label{def:inducedsubsystem}  
Let $G = (V,E)$ be an undirected graph. We call system~\eqref{MODEL} a formation control system {\bf induced by $G$}, with control laws $u_{ij}$ for $(i,j)\in E$. Let $G'=  (V', E')$ be a subgraph of $G$. A formation control system induced by $G'$ with control laws $u'_{ij}$, for $(i,j)\in E'$,  is a {\bf subsystem} of~\eqref{MODEL}  if $u'_{ij} = u_{ij}$ for all $(i,j) \in E'$.
\end{Definition}

An important property of the class of systems introduced in Definition~\ref{def:inducedsubsystem} is that their dynamics can be written as gradients of real-valued functions. The associated potential function $\Phi: P_G\longrightarrow \R$ is given by 
\begin{equation}\label{PHI}
\Phi(p) := \sum_{(i,j)\in E}\displaystyle\int^{\|x_i-x_j\|}_{1} s\, u_{ij}(s,\ol d_{ij})ds.
\end{equation}
A direct computation shows that $\dot x_i =-\partial \Phi/{\partial x_i}$ for all $i\in V$. Note that if
 $G'= (V', E')$ is a subgraph of $G$, then the subsystem of~\eqref{MODEL} induced by $G'$ is also a gradient system. The corresponding potential function $\Phi': P_{G'} \longrightarrow \R$ takes the same expression as~\eqref{PHI}, but this time with the summation over $E'$.   
We call $\Phi'$ the potential function {\it induced by} $G'$.



\paragraph{Monotone attraction/repulsion function} We  now introduce the class of control laws $u_{ij}$ studied in this paper. 
 A typical example of such a control law is 
\begin{equation*}
u_{ij}\(d,\,  \ol d_{ij}\) =  1 - \(\ol d_{ij}/d\)^2. 
\end{equation*}
This law is similar to the gradient control law in~\cite{krick2009} scaled by $1/d^2$. 
Under this control law, two agents $x_i$ and $x_j$ attract (resp. repel) each other if their mutual distance is larger (resp. smaller) than the target distance $\ol d_{ij}$. 
%
We will require a little more for the class of control laws considered in this paper. 
Let $\mathbb{R}_+$ be the set of strictly positive real numbers, and let $\operatorname{C}^1(\mathbb{R}_+,\mathbb{R})$  be the set of continuously differentiable functions from $\mathbb{R}_+$ to $\mathbb{R}$. We have the following definition:

\begin{Definition}[Monotone att./rep. function]\label{def:interactionfunction} We call $f \in \operatorname{C}^1(\R_+,\R)$ a {\bf monotone attraction/repulsion function} if it satisfies the following two conditions:

\begin{enumerate}
\item[C1.]  For any  $x>0$, we have  
$
d(xf(x))/dx>0
$, 
and $f(x)$ has a unique zero. 
\item[C2.]  $\lim_{x\to 0+}\int^1_{x} s f(s)ds=-\infty$. 
\end{enumerate}\,
\end{Definition}
%

Note that if $u_{ij}(\cdot,\ol d_{ij})$ is a monotone att./rep. function, with the unique zero at $\ol d_{ij}$, then, from condition~C1, $u_{ij}(d,\ol d_{ij})<0$ if  $d<\ol d_{ij}$ and $u_{ij}(d,\ol d_{ij})> 0$ if $d> \ol d_{ij}$. 
We also note that condition~C1 implies that if the graph $G$ is connected, then there is no escape of any agent along the evolution: On the one hand, for an edge $(i,j) \in E$, we have
$
\lim_{x\to\infty}\int^{x}_{1} su_{ij}(s,\ol d_{ij}) ds = \infty 
$. 
On the other hand,  the dynamics~\eqref{MODEL} is the gradient flow of $\Phi$, and hence $\Phi$  decreases along trajectories. We thus conclude that any pair of adjacent agents cannot be too far away from each other along the evolution. By the same reason, we note that condition~C2 prevents collisions of adjacent agents along the evolution because C2 implies that $\Phi$ is infinite when the distance of separation between two adjacent agents is zero. 
In fact, we can show that the solutions of system~\eqref{MODEL} exist for all time, and converge to the set of equilibria:


\begin{lem}\label{CONVGG}
If the control laws $u_{ij}(\cdot, \ol d_{ij})$'s are monotone att./rep. functions with $u_{ij}(\ol d_{ij},\ol d_{ij}) = 0$,  then for any initial condition $p(0)\in P_{G}$, the solution $p(t)$ of system \eqref{MODEL} exists for all $t \geq 0$ and converges to the set of equilibria. 
\end{lem}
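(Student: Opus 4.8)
The plan is to establish this as a standard consequence of the gradient structure combined with the coercivity-type properties that conditions~C1 and~C2 provide. The argument splits naturally into two parts: first showing that trajectories remain in a compact subset of the configuration space $P_G$ for all forward time, and second applying LaSalle's invariance principle (or the classical convergence theorem for gradient flows with compact sublevel sets) to conclude convergence to the equilibrium set.

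First I would fix an initial condition $p(0)\in P_G$ and consider the sublevel set $S := \{p \in P_G \mid \Phi(p) \le \Phi(p(0))\}$. Since the system is the gradient flow of $\Phi$, we have $\tfrac{d}{dt}\Phi(p(t)) = -\sum_{i}\|\partial \Phi/\partial x_i\|^2 \le 0$, so the trajectory stays in $S$ for as long as it exists. The crux is to argue that (the connected component of) $S$ containing $p(0)$ is compact. Here I would invoke precisely the two observations the authors have already spelled out. Condition~C1 gives $\lim_{x\to\infty}\int_1^x s\,u_{ij}(s,\ol d_{ij})\,ds = +\infty$, so boundedness of $\Phi$ forbids any edge-length $d_{ij}$ from diverging to $+\infty$; assuming $G$ is connected, this bounds the diameter of the formation and hence keeps the agents in a bounded region (modulo an overall translation, which I would factor out or pin down by fixing, say, the centroid). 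Condition~C2 gives $\int_x^1 s\,u_{ij}(s,\ol d_{ij})\,ds \to +\infty$ as $x\to 0^+$, so boundedness of $\Phi$ also keeps each edge-length $d_{ij}$ bounded away from $0$. Together these show the closure of the relevant component of $S$ is a compact subset of $P_G$ on which no two adjacent agents collide.

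Once the trajectory is trapped in a compact set $K \subset P_G$, forward completeness is immediate: the solution cannot leave $K$ in finite time and hence cannot blow up or reach the boundary of $P_G$, so it exists for all $t\ge 0$. For convergence, I would apply LaSalle's invariance principle with $\Phi$ as a Lyapunov function on $K$. The largest invariant set contained in $\{\dot\Phi = 0\} = \{\nabla\Phi = 0\}$ is exactly the equilibrium set, and LaSalle then forces the $\omega$-limit set of $p(t)$ to lie in this equilibrium set, yielding convergence to the set of equilibria as claimed.

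The main obstacle, and the only genuinely delicate point, is the translation-invariance of $\Phi$: because $\Phi$ depends only on the pairwise distances $\|x_i - x_j\|$, it is invariant under rigid translations of the whole formation, so $S$ is never compact in $\R^{2n}$ as stated. I expect the cleanest way to handle this is to note that along the gradient flow the centroid $\tfrac{1}{n}\sum_i x_i$ is conserved (one checks $\sum_i \dot x_i = 0$ directly from~\eqref{MODEL} by the antisymmetry of the summand under exchanging $i$ and $j$), so one can restrict attention to the affine subspace of fixed centroid. On that subspace the diameter bound from C1 together with the collision-avoidance bound from C2 genuinely yield compactness, and the argument above goes through. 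I would also flag that connectedness of $G$ is what allows the edgewise diameter bounds to control the global diameter; if the authors intend $G$ to be a rigid (Laman) graph, connectedness is automatic.
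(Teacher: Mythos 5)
Your argument is correct and takes essentially the approach the paper intends: the paper gives no in-line proof of Lemma~\ref{CONVGG} (it defers to~\cite{chen2016swarm}), but its discussion immediately preceding the lemma sketches exactly your reasoning---condition C1 rules out escape of adjacent agents, condition C2 rules out collisions since $\Phi$ blows up at zero separation, and the gradient structure plus an invariance-principle argument yields convergence to the set of equilibria. Your explicit treatment of translation invariance via conservation of the centroid (so that the sublevel set becomes genuinely compact on the fixed-centroid slice) is a detail the paper leaves implicit, and it is the right way to make the compactness step rigorous.
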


 We note that Lemma~\ref{CONVGG} holds even if the control laws have fading attractions, i.e., $\lim_{x\to\infty} du_{ij}(d,\ol d_{ij}) = 0$, for all $(i,j) \in E$. We refer to~\cite{chen2016swarm} for more details, including a proof of the lemma above.

\subsection{The potential function and its invariance}\label{ssec:potentialinvariance}

We now derive  some properties of the potential function $\Phi$. Note that $\Phi$ depends only on relative distances between the agents, thus it is invariant if we translate and/or rotate the entire configuration in $\R^2$. We  now describe this property in precise terms.

The special Euclidean group $\SE(2)$ has a natural action  on the configuration space. Recall that  $\gamma$ in $\SE(2)$ can be represented by a pair $(\theta,  v)$ with $\theta$ a rotation matrix, 
and $ v$ a vector in $\mathbb{R}^2$.  With this representation, the  multiplication of two elements $\gamma_1 = (\theta_1, v_1)$ and $\gamma_2 = (\theta_2, v_2)$ of $\SE(2)$ is given by 
$
\gamma_2\cdot \gamma_1  = (\theta_2 \theta_1, \theta_2  v_1 +  v_2) 
$.  
The action of $\SE(2)$  on $P_{G}$ alluded to above is defined as follows: given  $\gamma = (\theta, v)$ in $\SE(2)$ and  $p=( x_1,\cdots,  x _n)$ in $P_{G}$, we let
\begin{equation}\label{def:groupaction}
\gamma\cdot p:=(\theta  x_1+ v,\cdots,\theta  x _n+ v).
\end{equation} 
Now, let 
$
O_p:= \{ \gamma \cdot p  \mid \gamma \in \SE(2)\} 
$. We call $O_p$ the orbit of $\SE(2)$ through  $p \in P_{G}$. 
Then, the potential $\Phi$  keeps the same value over an orbit, i.e., 
$\Phi(p) = \Phi(\gamma\cdot p)$  for all $\gamma \in \SE(2)$  and  for all $p\in P_G$.

To proceed, we let $\nabla \Phi$ be the gradient of $\Phi$, i.e., $\nabla \Phi(p):= \partial \Phi(p) / \partial p$. We call $p \in P_G$ a {\bf critical point} of $\Phi$, or equivalently an {\bf equilibrium} of the gradient system  
$\dot p = - \nabla \Phi(p)$, 
if $\nabla \Phi(p) = 0$.  The invariance property of $\Phi$ over an orbit implies that 
\begin{equation*}
\nabla\Phi(\gamma\cdot p) = \operatorname{diag}(\theta,\cdots, \theta) \nabla\Phi(p),
\end{equation*}
where  $\operatorname{diag}(\theta,\cdots,\theta)$ is a block-diagonal matrix with $n $ copies of $\theta$.  Since $\operatorname{diag}(\theta,\cdots, \theta)$ is invertible, if $p$ is a critical point of~$\Phi$, then so is $p'$ in $O_p$.  We  thus refer to the orbit $O_p$  as a {\bf critical orbit} if $\nabla \Phi(p)=0$. 
Now, let $H_p$ be the {\it Hessian} of $\Phi$ at $p$, i.e., 
$
H_p:= \partial^2 \Phi(p) / \partial p^2 
$.  
The following lemma presents well-known facts about the Hessian matrix of an invariant function:

\begin{lem}\label{LEQUIV}
Let $\Phi$ be  a function invariant  under a Lie-group action over a Euclidean space. Let $k$ be the dimension of a critical orbit $O_p$ under the group action and denote by  $H_{p}$ the Hessian of $\Phi$ at $p$.  Then, for any two configurations $p_1$ and $p_2$ in $O_p$, the two matrices $H_{p_1}$ and $H_{p_2}$ are similar.  In addition, the Hessian $H_{p}$  has at least $k$ zero eigenvalues. The null space of $H_p$ contains  at least the tangent space of $O_p$ at $p$. 
\end{lem}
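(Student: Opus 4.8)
The plan is to derive all three claims from the gradient transformation rule stated just above the lemma by differentiating it once more, in two different directions. Write $A(\gamma):=\diag(\theta,\dots,\theta)$ for the block-diagonal matrix appearing there, so that the rule reads $\nabla\Phi(\gamma\cdot p)=A(\gamma)\,\nabla\Phi(p)$; since the $\SE(2)$-action is affine, $A(\gamma)$ is also the (constant) Jacobian of the map $p\mapsto\gamma\cdot p$, and it is orthogonal because each $\theta$ is a rotation. For the similarity claim I would differentiate this rule a second time with respect to $p$: by the chain rule the left-hand side becomes $H_{\gamma\cdot p}\,A(\gamma)$ and the right-hand side becomes $A(\gamma)\,H_p$, so that $H_{\gamma\cdot p}=A(\gamma)\,H_p\,A(\gamma)^{-1}$ for every $\gamma$ and every $p$. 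Choosing $\gamma$ with $\gamma\cdot p_1=p_2$ then gives $H_{p_2}=A(\gamma)\,H_{p_1}\,A(\gamma)^{-1}$, which is the asserted similarity (in fact an orthogonal one, as $A(\gamma)^{-1}=A(\gamma)^\top$).

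For the statements about the null space I would instead differentiate along the orbit. Fix a smooth curve $\gamma(t)$ in $\SE(2)$ with $\gamma(0)$ the identity, write $c(t):=\gamma(t)\cdot p$, and let $\xi:=\dot c(0)$, an arbitrary tangent vector to $O_p$ at $p$. Differentiating $\nabla\Phi(c(t))=A(\gamma(t))\,\nabla\Phi(p)$ at $t=0$ gives, on the left, $H_p\,\xi$, and, on the right, the $t$-derivative of $A(\gamma(t))$ at $t=0$ applied to $\nabla\Phi(p)$. Because $O_p$ is a critical orbit we have $\nabla\Phi(p)=0$, so the right-hand side vanishes and $H_p\,\xi=0$. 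As $\gamma(t)$ ranges over all such curves, the vectors $\xi$ sweep out a spanning set of $T_pO_p$, giving $T_pO_p\subseteq\ker H_p$, which is the third assertion. Since $\dim T_pO_p=k$ by the definition of the orbit dimension, $\dim\ker H_p\ge k$, and as $H_p$ is symmetric its kernel is spanned by eigenvectors with eigenvalue zero; hence $H_p$ has at least $k$ zero eigenvalues, which is the second assertion.

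None of these computations is hard, and I expect the only real subtlety to be the passage from congruence to similarity. If one wanted the statement for a general Lie-group action one would differentiate the invariance $\Phi\circ L_\gamma=\Phi$ twice directly, use criticality to discard the term carrying the second derivative of the action map $L_\gamma$, and arrive at $[DL_\gamma(p)]^\top H_{\gamma\cdot p}\,DL_\gamma(p)=H_p$, which is a congruence rather than a similarity; upgrading it to the similarity claimed in the lemma is exactly where orthogonality of the linearization is needed. Here that orthogonality is automatic, since the rotational part of $\SE(2)$ acts through the orthogonal blocks $\theta$, so the distinction causes no trouble; but it is the one step I would be careful to justify rather than the routine applications of the chain rule.
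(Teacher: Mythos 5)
Your proposal is correct and follows essentially the same route as the paper, which simply records the conjugation identity $H_{\gamma\cdot p}=\diag(\theta,\ldots,\theta)\,H_p\,\diag(\theta^\top,\ldots,\theta^\top)$ and otherwise cites the equivariant-dynamics literature; your derivation of that identity, together with the orbit-differentiation argument for $T_pO_p\subseteq\ker H_p$, supplies exactly the details the paper leaves as well known. Your closing remark is also well taken: for a general (non-isometric) Lie-group action one only gets a congruence, and the similarity claim genuinely uses that $\SE(2)$ acts through orthogonal blocks.
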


We refer to~\cite{field1980equivariant} for more facts on equivariant dynamical systems. 
In our case, if $p' = \gamma \cdot p$ for $\gamma = \(\theta, v\)$, then 
$$
H_{p'} = \diag(\theta, \ldots, \theta)\, H_{p} \,\diag(\theta^\top,\ldots, \theta^\top ),
$$ which shows that indeed, $H_{p'}$ is similar to $H_p$.
In the case of the group $\SE(2)$, the  orbits $O_p$, for $p\in P_{G}$, are of dimension~$3$.  From Lemma~\ref{LEQUIV}, there are thus at least three zero-eigenvalues of $H_p$.  Moreover, the eigenvalues of $H_p$, for any $p \in O_p$, are the same. The following definition is thus well posed:

\begin{Definition}[Nondegenerate critical orbits]\label{def:critstaborbits}
A critical orbit $O_p$ is  {\bf nondegenerate} if there are exactly three zero eigenvalues of $H_p$. Furthermore, it is   {\bf exponentially stable} if all the other eigenvalues are positive. \end{Definition}

We further need the following definition about the potential function $\Phi$:

\begin{Definition}[Equivariant Morse functions]
A potential function $\Phi$ is said to be an {\bf equivariant Morse function} if there are only \emph{finitely} many critical orbits of $\Phi$, and moreover, each critical orbit  is nondegenerate.   
\end{Definition}

\subsection{Triangulated formations and the main result}
Let $G=(V,E)$ be an undirected graph. Let the distance function $\rho_{G}: P_{G}\longrightarrow \mathbb{R}^{|E|}_+$ be defined by  
\begin{equation}\label{maprho}
\rho_{G}(p) := \(\cdots, \| x_i -  x_j\|^2,\cdots\)_{(i,j)\in E} 
\end{equation} 
where the ordering of the edges in $E$ is irrelevant.
The graph $G$ is called {\bf rigid} in $\mathbb{R}^2$ if for any vector ${\bf d} := (\cdots, d_{ij},\cdots) \in \mathbb{R}^{|E|}_+$, the preimage $\rho^{-1}({\bf d})$ is comprised of {\em finitely many} orbits in $P_G$.  
The graph $G$ is called {\it minimally rigid} if it is not rigid  after taking out any of its edges~\cite{laman1970}. A {\bf Laman graph} is a minimal rigid graph in $\R^2$.  
 
It is well-known that every Laman graph can be obtained via a so-called \emph{Henneberg sequence}; a Henneberg sequence $\{G(l)\}_{l\ge 1}$ is a sequence of minimally rigid graphs obtained via two basic operations: edge-split and vertex-add. 
We refer to~\cite{graver1993combinatorial} for more details about these operations. 
We define {\bf triangulated Laman graphs} as those graphs obtained by imposing constraints on the type of operations allowed in a Henneberg sequence. Precisely, we have the following definition: 

\begin{Definition}[Triangulated Laman graphs]\label{def:triangLaman}
A graph $G$ is a {\bf triangulated Laman graph} if it is an element of any sequence of graphs $\{G(l)\}_{l \geq 1}$ obtained as follows:  Start with $G(1)$, the graph comprised of a single vertex, and then $G(2)$, the graph obtained by linking a new vertex to the vertex in $G(1)$ via an edge.    
For $l \geq 3$, the graph $G(l)$ is obtained from $G(l-1)$ by linking  a  new vertex to two adjacent vertices in $G(l - 1)$ via two new edges.
\end{Definition}

 In other words, only the operation of vertex-add is allowed in the Henneberg sequence, and in addition, the new vertex, appearing in $G(l+1)$ for $l \ge 2$, cannot be adjacent to two arbitrary vertices, but rather to two vertices connected by an existing edge in $G(l)$. See Fig.~\ref{Hcon} for an illustration. Hence, a Henneberg sequence 
$\G =  \{G(l)\}^{n}_{l = 1}$,  with $G(l)=(V(l),E(l))$, 
for a triangulated Laman graph $G$ of $n $ vertices, satisfies the condition that each $G(l)$ is a subgraph of $G(l+1)$. Moreover, the cardinalities of $V(l)$ and $E(l)$, for $l \ge 2$, satisfy the condition that $|V(l)|= l$ and  $|E(l)| = 2l - 3$.

We also note that if $G$ is a triangulated Laman graph of more than two vertices, then there is at least a vertex of {\it degree}~$2$, i.e., a vertex adjacent to two and only two vertices. Indeed, by following a Henneberg sequence of $G$, we have that the last vertex appearing in the sequence is of degree~$2$. 

\begin{figure}[h]
\begin{center}
\includegraphics[width=0.6\textwidth]{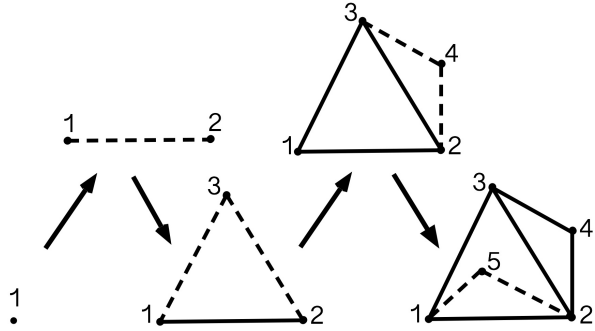}
\caption {An example of a triangulated Laman graph. Starting with vertex~$1$, and edge $(1,2)$, we then subsequently join vertices 3, 4 and 5 to two existing adjacent vertices.}
\label{Hcon}
\end{center}
\end{figure}

Let $G$ be a  triangulated Laman graph. We say that a subgraph $G'$  of $G$ is a {\it $3$-cycle} if $G'$ is a complete graph on three vertices. 
We further introduce the following definition: 

\begin{Definition}[Strongly rigid frameworks]
A framework $(G,p)$ is {\bf strongly rigid} (or simply $p$ is strongly rigid) if $p$ satisfies the following condition: if vertices $i$, $j$ and $k$ of $G$ form a $3$-cycle of $G$, then the triangle formed by agents $ x_i$, $ x_j$ and $ x_k$ is nondegenerate, i.e., $x_j- x_i$ and $x_k - x_i$ are linearly independent.  
\end{Definition}

Note that if $p$ is strongly rigid, then so is any $p'\in O_p$. Hence, there is no ambiguity in saying that an orbit $O_p$ is strongly rigid.  
 Let $\rho_G$ be the distance map defined in~\eqref{maprho}. A framework $(G,p)$ is said to be {\it infinitesimally rigid}~\cite{graver1993combinatorial} (or simply, $p$ is infinitesimally rigid) if the Jacobian matrix $\partial \rho_G(p)/ \partial p$ is of rank $2n - 3$. 
We have  the following fact:

\begin{pro}\label{pro:srcopendense}
Strongly rigid configurations are infinitesimally rigid, Moreover, they form an open dense subset of $P_{G}$.  
\end{pro}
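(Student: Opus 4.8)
The plan is to treat the two assertions separately. For the implication that strong rigidity entails infinitesimal rigidity, I would argue by induction along a Henneberg sequence $\G = \{G(l)\}_{l=1}^{n}$ realizing $G = G(n)$. Writing $R(p) := \partial \rho_G(p)/\partial p$ for the rigidity matrix, its row indexed by an edge $(i,j)$ is supported on the coordinate blocks of $x_i$ and $x_j$, where it equals $2(x_i-x_j)^\top$ and $2(x_j-x_i)^\top$ respectively. Since the tangent space to the $\SE(2)$-orbit through $p$ (of dimension $3$) lies in the kernel of $R(p)$, we always have $\rk R(p)\le 2n-3$; as a triangulated Laman graph has exactly $2n-3$ edges, infinitesimal rigidity is precisely full row rank of $R(p)$.

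For the induction, let $p^{(l)}$ denote the sub-configuration of $p$ induced by $G(l)$. Because every $3$-cycle of $G(l)$ is also a $3$-cycle of $G$, strong rigidity of $p$ descends to each $p^{(l)}$, so the hypothesis is preserved along the sequence. The base case is $G(2)$, where $2l-3 = 1$ and the single edge row is nonzero on $P_G$, giving full rank. For the step from $G(l-1)$ to $G(l)$ with $l\ge 3$, the new vertex $v$ is joined to two adjacent vertices $a,b$, so that $\{a,b,v\}$ is a $3$-cycle of $G$. Ordering the rows so that the two new edge rows $(a,v),(b,v)$ come last, and the columns so that the block of $v$ comes last, the matrix takes the form
\begin{equation*}
R(p^{(l)}) = \begin{pmatrix} R(p^{(l-1)}) & 0 \\ * & B \end{pmatrix},
\end{equation*}
where $B$ is the $2\times 2$ matrix with rows $2(x_v-x_a)^\top$ and $2(x_v-x_b)^\top$. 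Strong rigidity forces the triangle on $\{a,b,v\}$ to be nondegenerate, hence $x_v-x_a$ and $x_v-x_b$ are linearly independent and $B$ is invertible; since the top block vanishes on the columns of $v$, the two new rows are independent of the old ones, yielding $\rk R(p^{(l)}) = \rk R(p^{(l-1)}) + 2 = (2(l-1)-3)+2 = 2l-3$. This closes the induction and establishes the first claim at $l=n$.

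For the second assertion I would note that strong rigidity is exactly the conjunction, over the finite set of $3$-cycles $\{i,j,k\}$ of $G$, of the nonvanishing of the polynomial $\delta_{ijk}(p) := \det[\,x_j-x_i,\; x_k-x_i\,]$. Each set $\{\delta_{ijk}\neq 0\}$ is open, so openness of the strongly rigid set in $P_G$ follows by taking the finite intersection. For density, each $\delta_{ijk}$ is not identically zero (nondegenerate triangles exist), so its zero set is a proper algebraic subvariety of $\R^{2n}$, hence closed with empty interior; the configurations failing strong rigidity form the finite union of these zero sets intersected with $P_G$, which is therefore nowhere dense. Its complement is consequently open and dense in $P_G$, recalling that $P_G$ is itself open and dense in $\R^{2n}$ as the complement of finitely many diagonals $\{x_i=x_j\}$.

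The step I expect to demand the most care is the block-triangular reduction in the first part: one must verify that the Henneberg ordering of rows and columns genuinely produces zeros of $R(p^{(l-1)})$ on the $v$-columns and that the $2\times 2$ corner $B$ is precisely the obstruction controlled by strong rigidity, so that the rank increments by exactly two at each stage. The remaining ingredients—the rank bound from block structure and the nowhere-density of zero sets of nonzero polynomials—are standard.
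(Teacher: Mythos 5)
Your proposal is correct and follows essentially the same route as the paper's own proof: the openness/density claim via the nonvanishing of the finitely many $3$-cycle determinants (whose zero sets are codimension-one subvarieties), and infinitesimal rigidity by induction along a Henneberg sequence, exploiting a block-triangular rigidity matrix whose $2\times 2$ corner block is invertible precisely because strong rigidity makes the newly formed triangle nondegenerate. The only difference is cosmetic: the paper peels off the last-added vertex (induction on $n$) whereas you build up along the sequence, and your preliminary remark that the rank is a priori bounded by $2n-3$ is harmless but not needed.
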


We refer to Appendix~A.1 for a proof of Proposition~\ref{pro:srcopendense}.

Let $p = (x_1,\ldots, x _n)$ be a strongly rigid configuration, and recall that $d_{ij}$ is the distance between $x_i$ and $x_j$. 
Note that if vertices $i$, $j$ and $k$ form a $3$-cycle of $G$, then
\begin{equation*}\label{eq:eatingaclementinewithXudong}
\left\{
\begin{array}{l}
d_{ij} + d_{ik} > d_{jk},\\
d_{ij} + d_{jk} > d_{ik},\\
d_{ik} + d_{jk} > d_{ij}.
\end{array}\right.
\end{equation*}
We then say that the set of distances $\{d_{ij} \mid  (i,j)\in E  \}$ satisfies the {\bf strict triangle inequalities} associated with $G$. 
Conversely, if the target distances $\ol d_{ij}$, for $(i,j)\in E$, satisfy the strict triangle inequalities, then there will be \emph{strongly rigid} configurations $p = (x_1,\ldots, x_n)$, with $d_{ij} = \ol d_{ij}$ for all $(i,j)\in E$. Indeed, by the construction introduced in Definition~\ref{def:triangLaman}, there are $2^{n-2}$ orbits comprised of these configurations. 

We note here that this triangulated formation structure has also been considered in a class of {\it acyclic directed} formation control models (see, for example,~\cite{JB2006cdc,cao2008control,guo2010adaptive}), and the authors there have also addressed the issue of global convergence of target configurations. However, the analysis there can not be applied here; indeed, in an acyclic directed formation control model, there is a {\it leader-follower structure} which implies that the dynamics of the system has a triangular structure, i.e., the dynamics of the leader feeds into the dynamics of the follower but not the other way around. Yet, in an undirected formation control model, there does not exist such a leader-follower structure, and hence system~\eqref{MODEL} does not have a triangular structure.    

We now introduce the class of formation systems addressed in this paper.
\begin{Definition}[Triangulated formation systems]\label{def:triangsys}
Let $G=(V,E)$ be a triangulated Laman graph and let the target distances $\ol d_{ij}$, for $(i,j)\in E$,  satisfy the strict triangle inequalities associated with $G$. Let $u_{ij}(\cdot, \ol d_{ij})$, for $(i, j)\in E$, be monotone att./rep. functions, with $u_{ij}(d_{ij},\ol d_{ij}) = 0$. Further, we let the associated potential function $\Phi$, defined by~\eqref{PHI}, be an equivariant Morse function. Then, a formation system induced by $G$, with the control laws $u_{ij}$'s, is a {\bf triangulated formation system}.
\end{Definition}

\begin{Remark}\normalfont In Definition~\ref{def:triangsys}, there is a technical condition that the potential function $\Phi$ is an equivariant Morse function.  We note here that if the graph $G$ is a triangulated Laman graph as the case here, then $\Phi$ is  {\em generically} an equivariant Morse function. We refer to~\cite{chen2015reciprocal} for a proof of this fact. Yet, whether the result holds for $G$ an arbitrary rigid graph remains an unanswered question.  $\qed$
\end{Remark}

With the preliminaries above, we state the main result of the paper:

\begin{theorem}\label{MAIN}
Let system~\eqref{MODEL} be a triangulated formation system. Then, the following hold:
\begin{enumerate}
\item A critical orbit $O_p$ of $\Phi$ is exponentially stable if and only if  it is strongly rigid. There are $2^{n-2}$ stable critical orbits each of which satisfies the condition that $d_{ij}=\ol d_{ij}$ for all $(i,j)\in E$.
\item For almost all initial conditions $p(0)\in P_{G}$, the trajectory $p(t)$, generated by system~\eqref{MODEL}, converges to one of the $2^{n-2}$ stable critical orbits. 
\end{enumerate}\,
\end{theorem}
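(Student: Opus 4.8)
The plan is to reduce the entire statement to a computation of the Morse index (the number of negative eigenvalues) of the Hessian $H_p$ at each critical orbit. Since $\Phi$ is assumed to be an equivariant Morse function, every critical orbit is nondegenerate, so $H_p$ has exactly three zero eigenvalues; hence, by Definition~\ref{def:critstaborbits}, an orbit is exponentially stable precisely when its index is $0$. Part~1 then splits into two implications: (a) if $O_p$ is strongly rigid then its index is $0$ and $p$ is in fact a target configuration, and (b) if $O_p$ is not strongly rigid then its index is at least $1$. Throughout I would use the decomposition $H_p = R^\top \diag(V_{ij}'')\, R + \sum_{(i,j)\in E} V_{ij}'(d_{ij})\, \nabla^2_p d_{ij}$, where $R$ is the matrix whose rows are the $\nabla_p d_{ij}$, and $V_{ij}'(d)=d\,u_{ij}(d,\ol d_{ij})$, $V_{ij}''(d)=d(d\,u_{ij})/dd>0$ by condition~C1. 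The first (stiffness) term is positive semidefinite, while the second (prestress) term carries the factor $V_{ij}'$, which vanishes exactly at the target distance; this term is what can create negative directions.

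For (a) I would exploit the defining feature of a triangulated Laman graph (Definition~\ref{def:triangLaman}): the last vertex $n$ of a Henneberg sequence has degree~$2$, with neighbours $i,j$ that are themselves adjacent. The critical-point equation at $x_n$ reads $u_{ni}(x_n-x_i)+u_{nj}(x_n-x_j)=0$; if $O_p$ is strongly rigid the triangle $(x_i,x_j,x_n)$ is nondegenerate, so $x_n-x_i$ and $x_n-x_j$ are linearly independent and both control terms must vanish, forcing $d_{ni}=\ol d_{ni}$ and $d_{nj}=\ol d_{nj}$. Because these two terms vanish, the restriction $p'$ of $p$ to $G(n-1)$ is again a critical point of the induced subsystem potential and is still strongly rigid; induction along the Henneberg sequence then gives $d_{ij}=\ol d_{ij}$ on every edge, i.e.\ $p$ is a target configuration. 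At a target configuration the prestress term vanishes identically, leaving $H_p=R^\top\diag(V_{ij}'')\,R\succeq0$, whose kernel equals the kernel of $R$. Since strong rigidity implies infinitesimal rigidity (Proposition~\ref{pro:srcopendense}) and $R$ agrees with the rigidity matrix up to positive row scaling, $R$ has rank $2n-3$, so the kernel is exactly the three-dimensional space of trivial motions; hence the index is $0$ and $O_p$ is exponentially stable. The count $2^{n-2}$ follows from the two circle-intersection placements available at each vertex-add once the strict triangle inequalities hold.

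Direction (b) is the technical core and is where the promised signature formula enters. The idea is to compute the inertia of $H_p$ by eliminating the degree-$2$ vertices one at a time along the reversed Henneberg sequence by Schur complements: writing $H_p$ in block form with apex block $D_n=\partial^2\Phi/\partial x_n^2$, off-diagonal coupling $B$, and remaining block $A$, Sylvester's law of inertia gives $\operatorname{inertia}(H_p)=\operatorname{inertia}(D_n)+\operatorname{inertia}(A-BD_n^{-1}B^\top)$ whenever $D_n$ is invertible. When the apex triangle is nondegenerate one has $D_n=V_{ni}''\,\hat e_{ni}\hat e_{ni}^\top+V_{nj}''\,\hat e_{nj}\hat e_{nj}^\top\succ0$ (with $\hat e_{nk}$ the unit vector along $x_n-x_k$), and the Schur complement reduces to a Hessian of subsystem type on $G(n-1)$, the elimination only modifying the $(i,j)$ interaction, so the induction proceeds; when the triangle is degenerate, i.e.\ the three agents are collinear with line direction $\hat e$ and normal $\hat e^\perp$, one has $D_n=(V_{ni}''+V_{nj}'')\,\hat e\hat e^\top+(u_{ni}+u_{nj})\,\hat e^\perp(\hat e^\perp)^\top$, so the transverse direction is an eigenvector with eigenvalue $u_{ni}+u_{nj}$. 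The obstacle is genuine: the second-order transverse energy shows a collinear apex is destabilising only when $u_{ni}+u_{nj}<0$, so when the apex is transversely stable the required negative direction is a coupled mode spread over several vertices. Making the elimination rigorous — proving that the Schur complements retain enough structure for the additive inertia formula to persist, and that any collinear $3$-cycle contributes a net negative eigenvalue — is the heart of Sections~\S\ref{MICF} and~\S\ref{ssec:morsebott}, and is the step I expect to be hardest.

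For Part~2 the equivariant Morse assumption provides finitely many nondegenerate critical orbits. By Part~1 every non-strongly-rigid orbit has index at least $1$, so its basin is contained in a stable manifold of positive codimension and therefore has Lebesgue measure zero; a finite union of such sets is still null. Since Lemma~\ref{CONVGG} guarantees that every trajectory converges to the equilibrium set, almost every initial condition in $P_G$ lies outside these stable manifolds and must converge to one of the $2^{n-2}$ exponentially stable orbits identified in Part~1. This argument is standard once the index dichotomy of Part~1 is in hand, so I do not expect difficulties beyond verifying the hypotheses of the stable-manifold theorem for the nondegenerate (Morse--Bott) critical orbits.
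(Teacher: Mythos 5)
Your Part~1(a) (strongly rigid $\Rightarrow$ target configuration $\Rightarrow$ index $0$) is correct, and in fact takes a more direct route than the paper: you use Henneberg induction plus the stiffness/stress decomposition of $H_p$, whereas the paper routes this through the independent partition and the signature formula of Theorem~\ref{MBIF} (Corollary~\ref{cor:forstronglyrigidconf}, resting on Proposition~\ref{pro:equilibriumcondition}). Your Part~2 is also fine and matches the paper's stable-manifold remark. The problem is direction (b): \emph{not strongly rigid $\Rightarrow$ index $\geq 1$} is the core of the theorem, and your proposal does not prove it --- you sketch a Schur-complement elimination and then explicitly defer "the step I expect to be hardest." Concretely, the obstructions you half-identify are fatal to that route as stated: (i) when a collinear apex satisfies $u_{ni}+u_{nj}>0$ the instability is a collective mode, and your elimination provides no mechanism for producing it; if $u_{ni}+u_{nj}=0$ the block $D_n$ is singular and the inertia-additivity identity does not even apply; (ii) the Schur complement $A-BD_n^{-1}B^\top$ is not, in general, the Hessian of a formation system of the same class, so the induction does not close; and (iii) your sketch of (b) never invokes the strict triangle inequalities on the target distances, yet the claim is false without them (if the targets are themselves realized by a collinear configuration, that collinear equilibrium is stable), so no argument silent on this hypothesis can be complete.

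The paper resolves (b) with different machinery, and it is worth seeing why it sidesteps your obstacles. First, it proves a \emph{congruence}, not an elimination: using the perturbation maps $\eta_i$ of Theorem~\ref{indpar} adapted to the independent partition, one gets $W^\top H_p W = \Lambda$ with $\Lambda$ built from the sub-framework Hessians (Theorem~\ref{MBIF}); Sylvester's law then makes the signature additive over the line sub-frameworks with no invertibility hypotheses. Second, for a critical line framework with at least three agents, Proposition~\ref{pro:signatureofFp} shows $N_-(F_p)\geq 1$ by induction: a degree-$2$ vertex is removed by \emph{system reduction} (Lemma~\ref{lem:systemreduction}), and --- this is where the strict triangle inequalities enter --- the modified law $f^*_{ij}=f_{ij}+g_{ij}$ can be chosen to be again a monotone att./rep.\ function vanishing at $\ol d_{ij}$, so the reduced system is again a triangulated formation system and the induction hypothesis applies. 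Finally, a negative eigenvector $v$ of the reduced matrix $F_{p^*}$ is lifted explicitly to $u=(v,v_k)$ (with $v_k$ an affine combination of $v_i,v_j$), giving $u^\top F_p u=\lambda\|v\|^2<0$. That lifted vector is precisely the "coupled mode spread over several vertices" you anticipated but did not construct; without it, or an equivalent device, your proof of Part~1 is incomplete.
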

\begin{Remark}
\normalfont 
We characterize here precisely the set of initial conditions for which the solution of~\eqref{MODEL} converges to a stable critical orbit. First, note that there exist critical orbits of $\Phi$ other than the orbits of target configurations. To see this, consider for example a subset of $P_G$ comprised of the {\em line configurations}---these are the configurations $p = (x_1,\ldots, x_n)$ for which $x_1,\ldots, x_n$ are aligned. It is known that such a set is positive invariant under the dynamics~\eqref{MODEL} (see, for example,~\cite{chencontrollability,sun2015rigid}). Since, from Lemma~\eqref{CONVGG}, a solution of~\eqref{MODEL} always converges to the set of equilibria, there exists at least a critical orbit of $\Phi$ comprised only of line configurations. But, from Theorem~\ref{MAIN}, any such critical orbit is unstable. It is in general hard to locate, or even just count the number of, all the unstable critical orbits. However, since the potential $\Phi$ is (generically) an equivariant  Morse function for $G$ a triangulated Laman graph, there are only finitely many unstable critical orbits. We label them as $O_{q_1},\ldots, O_{q_l}$. Now, let $\phi_t(p)$ be the solution of~\eqref{MODEL} at time~$t$ with initial condition $p$.  For each $i = 1,\ldots, l$, the so-called {\bf stable manifold} of $O_{q_i}$ is defined as follows:
$$
W^s(O_{q_i}):=\{ p\in P_G \mid \lim_{t\to \infty}\phi_t(p) \in O_{q_i} \}. 
$$
Because $O_{q_i}$ is unstable, the codimension of $W^s(O_{q_i})$ is at least~$1$, i.e., $\dim P_G - \dim W^s(O_{q_i}) \ge 1$. Further, we let
$$
Q := \cup^l_{i  = 1} W^s(O_{q_i}).
$$
Then, by construction, the set $Q^c:=P_G - Q$, defined as the complement of $Q$ in $P_G$, is open and dense in $P_G$. Moreover, for any initial condition $p(0)\in Q^c$, the solution of~\eqref{MODEL} converges to one of the stable critical orbits.  $\qed$
\end{Remark}
\

The implication of Theorem~\ref{MAIN} is that the control laws considered in this paper are essentially stabilizing. The next two sections are devoted to proving Theorem~\ref{MAIN}. In Section~\S\ref{MICF}, we introduce an edge-set partition (called the independence partition), which decomposes a framework $(G,p)$ into sub-frameworks $\{(G_i,p_i)\}^m_{i=1}$ with each $G_i$ a triangulated Laman graph and $p_i$ a line sub-configuration. We also describe relevant properties associated with the edge-set partition. In Section~\S\ref{ssec:morsebott}, we compute the Hessian of $\Phi$ at critical points. In particular,  we provide a formula for computing the signature of the Hessian, which might be of independent interest.

\section{Independent partitions}\label{MICF}

In this section, we introduce the notion of an {\it independent partition} for a framework $(G,p)$.  This is a partition of the \emph{edge-set} of $G$ such that, roughly speaking, \emph{edges that are aligned (in the embedding of $G$ by $p$)  all belong to the same subset}. Specifically, the independent partition for a framework $(G,p)$ can be defined via a Henneberg sequence of $G$, and we give below a precise definition: 

\begin{Definition}[Independent partition]\label{def:indpar}  Let $G = (V, E)$ be a triangulated Laman graph of $n $ vertices, for $n \ge 2$.  Let $(G,p)$ be a framework, for $p\in P_G$. 
Let $\G = \{G(l) \}^{n }_{l = 1}$, with $G(l)= (V(l), E(l))$, be a Henneberg sequence of $G$. 
We label the vertices of $G$ with respect to the order in which they appear in the sequence.
An {\bf independent partition} for $(G, p)$ is a partition of $E$, which can be defined  recursively along the sequence $\G$ as follows:
\begin{enumerate}
\item {\it Initial step.} Start with $E(2)$, which contains only one edge $(1,2)$, and hence the partition is trivial.
 
\item {\it Inductive step.} Assuming that $E(k-1)$, for $k \ge 3$, has been partitioned as 
$E(k-1) = \sqcup^m_{i=1}E_i$, 
we define the partition of $E(k)$ as follows. 
Suppose that vertex $k$ (the next appearing vertex in $\G$) links to vertices~$i$ and~$j$ via edges $(i,k)$ and $(j,k)$. Without loss of generality, we assume that $(i,j)\in E_1$. There are two cases to consider:

Case I. Suppose that $ x_{i}$, $ x_j$, and $ x_{k}$ are aligned; then, we let 
$E'_1 := E_1 \cup \{(i,k), (j,k)\}$, and define the partition of $E(k)$ by 
$
E(k) =  E'_1 \cup E_2 \cup \ldots \cup E_m
$. 

Case II. Suppose that $ x_i$, $ x_j$, and $ x_{k}$ are not aligned; then, we define the partition of $E(k)$ by
 $
E(k) =  E_1 \cup \ldots \cup E_m \cup \{(i,k)\} \cup \{(j,k)\}
$. 

\end{enumerate}
Following the Henneberg sequence $\G$, an independent partition for  $(G,p)$ is derived. 
\end{Definition}

We illustrate the notion of  an independent partition in Fig.~\ref{DECO}. We show below that the independent partition is unique and, in particular, independent of the particular choice of a Henneberg sequence.

\begin{figure}[h]
\begin{center}
\includegraphics[width=0.6\textwidth]{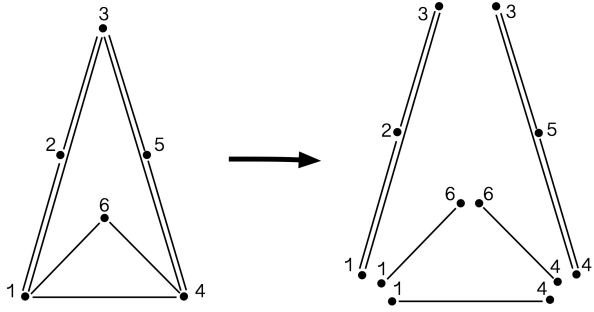}
\caption {Example of an independent partition. We see from the left figure that the graph $G$ is a triangulated Laman graph as we label the vertices with respect to a Henneberg sequence, and $p$ is a planar configuration with $ x_1,  x_2, x_3$ aligned, and $ x_3, x_4, x_5$ aligned. Then, the independent partition for $(G,p)$ is given by the right figure.}
\label{DECO}
\end{center}
\end{figure}


\begin{lem}\label{INDHC} 
Let $(G,p)$ be a framework, with $G$ a triangulated Laman graph and $p\in P_G$. Then, the independent partition for $(G,p)$ does not depend on any particular choice of a Henneberg sequence of~$G$. 
\end{lem}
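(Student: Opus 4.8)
The claim is that the independent partition of $(G,p)$ is intrinsic to the framework, not an artifact of the chosen Henneberg sequence. The natural strategy is to characterize the partition by an equivalence relation on $E$ that is manifestly defined in terms of $p$ alone, and then show that any Henneberg sequence produces exactly the equivalence classes of that relation. First I would define a relation on the edge set: declare two edges $e, e'$ to be \emph{directly aligned} if they share a vertex and the three endpoints involved are collinear in the embedding $p$ (equivalently, the two edge-vectors are parallel). Let $\sim$ be the transitive closure of this relation, so its classes group together maximal sets of edges that are chained together by shared collinear vertices. This relation depends only on $(G,p)$. The goal is then to prove that for \emph{every} Henneberg sequence $\mathcal{G}$, the partition produced by Definition~\ref{def:indpar} coincides with the partition into $\sim$-classes.

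\textbf{Key steps.} I would argue by induction on the Henneberg sequence, but with the inductive hypothesis phrased against the sequence-independent relation $\sim$. The cleaner route is to fix an arbitrary sequence $\mathcal{G} = \{G(l)\}$ and show, by induction on $l$, that the partition of $E(l)$ generated by Definition~\ref{def:indpar} equals the partition of $E(l)$ into the $\sim$-classes \emph{restricted to the subgraph $G(l)$}. The base case $l=2$ is the single-edge trivial partition. For the inductive step, vertex $k$ is added adjacent to $i,j$ with $(i,j)$ lying in some existing block $E_1$. In Case~I ($x_i, x_j, x_k$ collinear), the two new edges $(i,k),(j,k)$ are each directly aligned with $(i,j)$, so they lie in the same $\sim$-class as $(i,j)$; this matches the prescription $E_1' = E_1 \cup \{(i,k),(j,k)\}$. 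In Case~II ($x_i, x_j, x_k$ not collinear), neither new edge is collinear with $(i,j)$ along the shared vertices, so each begins a fresh singleton, again matching the definition. The delicate point is to verify that adding $k$ cannot \emph{merge} two previously distinct blocks, nor retroactively change the classes among the old edges: since the new vertex $k$ has degree exactly $2$ in $G(l)$ and its only edges are the two just added, any alignment chain through $k$ must enter and leave via $(i,k)$ and $(j,k)$, which are collinear with each other precisely in Case~I, and in that case both already join $E_1$. So no spurious merging occurs, and the restriction of $\sim$ to $G(l)$ is stable under the operation. Since both the definition-generated partition and the $\sim$-partition agree at every stage and the final graph is $G = G(n)$, the two partitions of $E$ coincide; as $\sim$ was defined without reference to the sequence, the independent partition is sequence-independent.

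\textbf{Main obstacle.} The routine part is matching Cases~I and~II to the direct-alignment relation. The genuine subtlety lies in ruling out \emph{long-range} interactions: one must ensure that the transitive closure $\sim$ is correctly reproduced step by step, i.e., that a newly added collinear pair does not connect two old blocks that the sequence had kept separate, and conversely that the sequence never separates edges that $\sim$ should unite. The key structural fact that makes this work is that each added vertex has degree $2$ and links to an \emph{adjacent} pair $(i,j)$; this forces every alignment chain through the new vertex to pass through the single block $E_1$ containing $(i,j)$, so merges can only ever consolidate edges into $E_1$ and never bridge distinct blocks. I expect the bulk of the careful writing to go into formalizing this degree-$2$ locality argument and confirming that the transitive closure computed incrementally agrees with the global closure, which is where an incautious proof could hide a gap.
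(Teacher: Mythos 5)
Your strategy---find a sequence-independent characterization of the partition and show every Henneberg sequence reproduces it---is reasonable in principle, but the characterization you chose is false, and this breaks the whole induction. The independent partition is \emph{not} the set of classes of the transitive closure of your ``directly aligned'' relation (share a vertex, collinear endpoints). Concrete counterexample: let $G$ have vertices $\{1,2,3,4\}$ and edges $(1,2),(1,3),(2,3),(2,4),(3,4)$, built by the Henneberg sequence $1$, then $2$, then $3$ joined to $(1,2)$, then $4$ joined to $(2,3)$; take $x_1=(0,0)$, $x_2=(1,0)$, $x_3=(0,1)$, $x_4=(2,0)$. Both $3$-cycles $(1,2,3)$ and $(2,3,4)$ are nondegenerate, so every step of Definition~\ref{def:indpar} falls into Case~II and the independent partition consists of five singletons (indeed $p$ is strongly rigid, cf.\ Remark~\ref{rmk:indepedentpartitionforstronglyconf}). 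Yet the edges $(1,2)$ and $(2,4)$ share vertex $2$ and have collinear endpoints $x_1,x_2,x_4$, so under your relation $(1,2)\sim(2,4)$, giving a strictly coarser partition. Your inductive step hides exactly this error: in Case~II you only check alignment of the two new edges with each other and with $(i,j)$ ``through $k$,'' but a new edge $(i,k)$ can be directly aligned with an \emph{old} edge $(i,l)$ at the old endpoint $i$, which your degree-$2$ locality argument never rules out. This is not a repairable detail: your relation simply computes the wrong object.

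The correct intrinsic characterization, which the paper establishes in Proposition~\ref{PARMAX}, is that the independent partition is the unique coarsest element of $\Sigma(G,p)$, the partitions whose blocks induce sub-frameworks that are \emph{line frameworks on triangulated Laman subgraphs}. The Laman requirement is what kills your counterexample: $\{(1,2),(2,4)\}$ induces a path, not a triangulated Laman graph, so it cannot be a block even though its edges are aligned. (Note also that the paper proves Proposition~\ref{PARMAX} \emph{after}, and using, Lemma~\ref{INDHC}, so one must be careful about circularity if one wants to prove the lemma via an intrinsic characterization.) The paper's actual proof of Lemma~\ref{INDHC} avoids any intrinsic description: it is a direct induction on the number of vertices comparing two arbitrary sequences $\G$ and $\G'$, using the fact that the last vertex $k$ of $\G$ has degree $2$ and lies in a unique $3$-cycle to argue that, up to harmless reordering, $k$ may be taken to be the last vertex of $\G'$ as well; deleting $k$ from both sequences then reduces the claim to the induction hypothesis on $G^*$.
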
 

\begin{proof} The proof is  by induction on the number of vertices of $G$. For the base case $n = 2$, there is only one Henneberg sequence of~$G$, and hence Lemma~\ref{INDHC} is trivially true.  

For the inductive step, assuming that Lemma~\ref{INDHC} holds for $n <k$, with $k \ge 3$, we show its validity for $n = k$.
 Let $\G=\{G(l)\}^{k}_{l = 1}$ and $\G'=\{G'(l)\}^{k}_{l = 1}$ be two distinct Henneberg sequences of $G$. We want to show that the independent partitions, obtained by following $\G$ or $\G'$ (introduced in Definition~\ref{def:indpar}), are the same. Label the vertices of $G$ with respect to the order in which they appear in  $\G$, and assume that the last vertex to appear, vertex~$k$, is adjacent to vertices~$i$ and~$j$.  Hence, there is a $3$-cycle $(i,j,k)$ in $G$ and vertex~$k$ is not in any other $3$-cycle.

First, assume that either $(i,k)$ (or $(j,k)$) is the first edge appearing in $\G'$. Then, vertex~$j$ (or $i$) \emph{has} to be the next vertex appearing in $\G'$ so that (i) the $3$-cycle $(i,j,k)$ is formed and (ii) vertex~$k$ is not in any other $3$-cycle. This sequence $\G'$ yields the same independent partition as the sequence $\G''$ with $(i,j)$ as its first edge, vertex~$k$ the next vertex added, and  all other steps the same. A similar reasoning applies for the case where $(j,k)$ is the first edge of $\G'$. Thus,  we can assume, without loss of generality, that neither $(i,k)$ nor $(j,k)$ is the first edge of $\G'$.

Second, note that vertex~$k$ has degree $2$, and hence it must appear in $\G'$ by linking to vertices~$i$ and~$j$ via edges $(i,k)$ and $(j,k)$. Moreover, once vertex~$k$ joins $G$,  there will be no vertex linking to~$k$. Hence, it does not matter (for the independent partition) when vertex~$k$ appears in $\G'$, and we assume that $k$ is the last vertex joining $G$ in~$\G'$.    

Now, let  $G^*= (V^*, E^*)$ be the subgraph induced by vertices $V^* := \{1,\cdots,k-1\}$, and $\(G^*,p^*\)$ be the corresponding framework. 
Since $k$ is the last vertex joining $G$ in the sequences $\G$ and $\G'$,  we obtain Henneberg sequences $\G^*$ and $\G'^*$  of $G^*$ by omitting the last step of $\G$ and of $\G'$.   
From the induction hypothesis,  the two sequences $\G^*$ and $\G'^*$ give rise to the same independent partition of $E^*$ for $\(G^*,p^*\)$, which completes the proof.
\qquad\end{proof}

We describe below some  properties of the independent partition that will be needed to prove the main results. 
We first have some definitions and notations. 
Let $(G, p)$ be a framework, with $G$ a triangulated Laman graph and $p\in P_G$. Let $\{E_1,\cdots, E_m\}$ be the disjoint subsets of edges associated with the independent partition for $(G, p)$. Let $G_i =(V_i,E_i)$ be a subgraph of $G$, with $V_i$ the set of vertices incident to edges in $E_i$. Let $p_i$ for be a sub-configuration of $p$ induced by $G_i$.  We call $\left \{\(G_i, p_i\) \right\}^m_{i=1}$ {\it the frameworks associated with the independent partition for $(G, p)$}.  

For each $i = 1,\ldots, m$, we let $V_i = \{i_1,\ldots, i_{|V_i|}\}$. Recall that the configuration space $P_{G_i}$ associated with $G_i$ is defined as follows:
$$
P_{G_i} := \left\{  (x_{i_1}, \ldots, x_{i_{|V_i|}}) \in \R^{2|V_i|} \mid x_{i_j} \neq x_{i_k}  \right\}.
$$
We also recall that the group action of $\SE(2)$ on $P_G$ is defined in~\eqref{def:groupaction}.  In the same way, we let $\SE(2)$ act on $P_{G_i}$, and denote by $O_{p_i}$ the orbit of $p_i$ in $P_{G_i}$. The dimension of $O_{p_i}$ is~$3$. We further note that if $p'$ is in $O_p$ and $p'_i$ is the sub-configuration of $p'$ induced by $G_i$, then $p'_i $ is in $O_{p_i}$. 

Further, recall that for a vector $p = (x_1,\ldots, x _n)\in \R^{2n}$, with $x_i\in \R^2$, we have that $p|_{i} := x_i$ for all $i =1,\ldots,  n$. Similarly, for a subset $V' = \{i_1,\ldots, i_k\}$ of $V$, we define 
\begin{equation}\label{eq:defpsubv}
p|_{V'} := \(x_{i_1},\ldots, x_{i_k} \).
\end{equation}  
So, for example, if $\{(G_i,p_i\}^m_{i = 1}$, with $G_i = (V_i, E_i)$, are the frameworks associated with the independent partition for $(G,p)$, then $p |_{V_i} = p_i$ for all $i = 1,\ldots, m$. But the definition~\eqref{eq:defpsubv} is not restricted only to configurations in $P_G$, but rather applied to arbitrary vectors in $\R^{2n}$.

We call a frame $(G, p)$, with $p = (x_1,\ldots, x _n)$, a {\bf line framework} if all $x_i$, for $1 \leq i \leq  n$, are aligned, and $p$ a {line configuration}.  
With the definitions and notations above, we have the following result:

\begin{theorem}\label{indpar}
Let $(G,p)$ be a framework, with $G$ a triangulated Laman graph and $p \in P_G$.
Let $\{(G_i,p_i)\}^m_{i=1}$, with $G_i = (V_i, E_i)$,  be the frameworks associated with the independent partition for $(G,p)$. Then, the following properties hold:
\begin{enumerate}
\item\label{cond1}  Each $G_i$ is a triangulated Laman graph, and each $(G_i, p_i)$ is a line framework.

\item If there is another partition of $E$ satisfying  condition~\ref{cond1}, then it is a refinement of the independent partition.  In other words, the independent partition contains minimal number of sub-frameworks $(G_i,p_i)$  satisfying  condition~\ref{cond1}.


\item  For each $i = 1,\ldots, m$, there is an open neighborhood $\cal{U}_i$ of $p_i$  in $\mathbb{R}^{2|V_i|}$, and a unique smooth map 
\begin{equation}\label{DELTAMAP}
\eta_i : \cal{U}_i \longrightarrow P_G
\end{equation}
such that $\eta_i(p_i) = p$, and moreover, for  all $p'_i$ in $\cal{U}_i$, we have  
$$
 \eta_i(p'_i)|_{V_j} =
\left\{
 \begin{array}{ll}
   p'_i  & \mbox{if } j = i, \vspace{3pt}\\
   \in O_{p_j} & \mbox{otherwise}.
 \end{array}
 \right. 
 $$



\end{enumerate}\, 
\end{theorem}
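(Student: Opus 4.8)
My plan is to prove all three assertions by induction along a Henneberg sequence $\G=\{G(l)\}_{l=1}^{n}$ of $G$, exploiting the recursive rule of Definition~\ref{def:indpar} together with Lemma~\ref{INDHC} (the partition is sequence-independent). For \emph{Part 1}, I would carry an invariant up the sequence: each current block $E_a$ is the edge set of a triangulated Laman graph and the incident vertices $V_a$ are collinear in $p$. The base case is trivial. When vertex $k$ is added to adjacent vertices $i,j$ with $(i,j)\in E_1$: in Case~I the block $E_1$ grows by attaching the new vertex $k$ to the \emph{existing} edge $(i,j)$, which is exactly a triangulated vertex-add, so $G_1$ stays triangulated Laman; moreover $x_i,x_j,x_k$ are aligned and $x_i,x_j$ already lie on the common line of $V_1$, so $x_k$ does too and $V_1\cup\{k\}$ remains collinear. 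In Case~II the two new edges form singleton blocks, each a single edge, hence a (trivially) triangulated Laman line framework. All other blocks are untouched, so the invariant propagates.

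For \emph{Part 2}, I would show the independent partition is the coarsest partition obeying condition~\ref{cond1}, i.e. any partition $\{F_a\}$ meeting condition~\ref{cond1} refines it. Induct on $n$; let $k$ be the last (degree-$2$) vertex, adjacent to $i,j$, and set $G^\ast=G-k$. Two combinatorial facts drive the argument: a triangulated Laman graph on $\ge 3$ vertices has minimum degree $2$, and every degree-$2$ vertex has \emph{adjacent} neighbors. Applying these to how $\{F_a\}$ treats $(i,k),(j,k)$: either both lie in one block $F$, which forces $k$ to have degree $2$ in $F$, hence $(i,j)\in F$ and $x_i,x_j,x_k$ collinear (independent-partition Case~I); or they are forced to be two singletons (otherwise some block would have a degree-$1$ vertex while having $\ge 3$ vertices). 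In both cases, deleting the two new edges produces a valid partition of $G^\ast$, which by induction refines the independent partition of $G^\ast$; re-inserting the edges and checking against Cases~I and~II shows $\{F_a\}$ refines the independent partition of $G$.

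\emph{Part 3} is the heart. I would build $\eta_i$ by an explicit sequential geometric construction: place block $i$ first, exactly at $p'_i$, then follow a Henneberg sequence and locate each remaining vertex either (a) if it is added inside a block $j\neq i$ by a collinear (Case~I) step, by applying that block's already-determined rigid motion $\gamma_j\in\SE(2)$, contributing no new freedom; or (b) if it is a Case~II vertex, by intersecting the two circles centered at its two already-placed neighbors with the prescribed radii. Since at every Case~II joint the three points are non-collinear, the two circles meet transversally, so the implicit function theorem yields a unique smooth local solution on the branch through the original point, and simultaneously fixes the rigid motions of the two incident single-edge blocks. Composing these smooth steps gives a unique smooth $\eta_i$ with $\eta_i(p_i)=p$, with $\eta_i(p'_i)|_{V_i}=p'_i$ by construction, and with $\eta_i(p'_i)|_{V_j}\in O_{p_j}$ for $j\neq i$, because every edge length of block $j$ is preserved and hence block $j$ is an orientation-preserving rigid image of $p_j$. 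I would run the induction by removing a degree-$2$ vertex that does not disturb block $i$ whenever one exists; the residual case, in which block $i$ is a single edge incident to the removed vertex, is dispatched by the same transversal two-circle step, this time solving for the rotation of the adjacent block.

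The main obstacle is precisely that each $(G_j,p_j)$ is \emph{collinear} and therefore infinitesimally flexible: the rigidity matrix of a block drops rank, so the Jacobian of the global system of squared edge-length constraints is singular at $p$, and one \emph{cannot} obtain $\eta_i$ by a naive implicit function theorem on those constraints. The construction must instead be organized so that the only equations solved are the non-degenerate trilateration equations at the Case~II joints, where transversality genuinely holds. The delicate points, where the argument needs care, are showing that a consistent placement order exists starting from block $i$, that vertices shared among several blocks receive mutually consistent positions, and that uniqueness is pinned down on the correct geometric branch near $p$.
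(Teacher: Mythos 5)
Your proposal is correct, and for the heart of the theorem (Part 3) it is essentially the paper's own proof: the paper likewise orders the construction along a Henneberg sequence in which $G_i$ appears first (Lemma~\ref{BASEIR}), solves each non-degenerate trilateration step by the implicit function theorem (Lemma~\ref{SIMPLE}), and places each collinearly-added vertex by a rigid/affine rule (formula~\eqref{ALIGND}), thereby avoiding--exactly as you point out--a naive implicit-function argument on the full, singular system of length constraints. The genuine differences are in the supporting steps. For Part 2 the paper argues top-down: given a competing partition, each of its blocks is realized as a \emph{leading subgraph} of some Henneberg sequence of $G$ (Lemma~\ref{BASEIR}), and sequence-independence (Lemma~\ref{INDHC}) then forces all of that block's edges into a single block of the independent partition; you instead argue bottom-up, peeling off a degree-$2$ vertex and doing a case analysis on how the competing partition treats its two edges. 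Both routes rest on the same combinatorial facts (minimum degree $2$, adjacency of a degree-$2$ vertex's neighbours, reorderability of Henneberg sequences); yours trades the leading-subgraph lemma for a slightly longer case analysis, while the paper's version isolates a reusable lemma that it also needs in Part 3. For the orbit-membership claim in Part 3, the paper checks that all edge lengths within each block $j\neq i$ are preserved and then invokes Lemma~\ref{lem:preimagecontainsasingleorbit} (the length-preimage of a collinear triangulated framework is a single orbit), whereas you maintain the stronger invariant that each block's placed part is a rigid image by construction; these are equivalent in content. One caution: ``applying the block's already-determined rigid motion'' is only well defined up to a reflection fixing the block's line, so to get well-definedness and smoothness you should realize that step concretely by the barycentric formula~\eqref{ALIGND}, as the paper does.
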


The last part of Theorem~\ref{indpar} will be of great use to compute the signature of the Hessian of~$\Phi$ (in the next section). Roughly speaking, it says  that we can freely perturb the positions of agents in a sub-framework $(G_i,p_i)$ while keeping unchanged the inter-agent distances in each of the other frameworks $(G_j,p_j)$,  for $j \neq i$. For example, let $G$ be a complete graph of $3$ vertices, and  $p=(x_1,x_2,x_3)$ a non-degenerate triangle, i.e., $x_1$, $x_2$, and $x_3$ are not aligned. Then, the independent partition for $(G,p)$ yields 
$$\left\{E_1:=\{2,3\},E_2:=\{1,3\},E_3:=\{(1,2)\}\right\}.$$ 
It is intuitively clear in this case that for any small perturbation $p'_3=(x'_1,x'_2)$ of $p_3=(x_1,x_2)$, there exists a unique $x_3'$, close enough to~$x_3$,  such that  
$\|x'_1-x_3'\|=\|x_1-x_3\|$ and   $\|x_2'-x_3'\|=\|x_2-x_3\|$.   
The map $\eta_3$ then yields $x_3'$ as a function of $x'_1$ and $x_2'$, i.e., $\eta_3(p_3')=(x_1',x_2',x_3')$. A precise statement for this fact will be in Lemma~\ref{SIMPLE}, Subsection~\S\ref{Shape}.

With the last part of Theorem~\ref{indpar}, we can easily evaluate the differential of the maps $\eta_i$. 
First, for an arbitrary smooth manifold $M$, denote by $T_pM$ be the {\it tangent space} of $M$ at a point $p\in M$.  Since $\eta_i$ is smooth, the derivative of $\eta_i$ at $p_i$ is well defined. We denote it by 
$$ D_{p_i}\eta_i : T_{p_i} \cal{U}_i \longrightarrow T_{p}P_G,$$ 
 where we have used the property that $\eta_i(p_i) = p$. 
The following fact is then an immediate consequence of the last part of Theorem~\ref{indpar}: 

\begin{cor}\label{cor:derivativeofeta}
Let $(G,p)$ be a framework, with $G$ a triangulated Laman graph and $p \in P_G$.  Let $\left \{ (G_i, p_i) \right \}^m_{i=1}$, with $G_i = (V_i, E_i)$, be frameworks associated with the independent partition for $(G,p)$. Let $\eta_i$ be defined in Theorem~\ref{indpar}, and $D_{p_i}\eta_i$ be the derivative map. Then,  for any $v\in T_{p_i} \cal{U}_i$, we have
$$
D_{p_i}\eta_i(v) |_{V_j} = 
\left \{
\begin{array}{ll} 
v & \mbox{if } j = i,  \vspace{3pt}\\
\in T_{p_j}O_{p_j} &  \mbox{otherwise}.
\end{array}\right. 
$$\,
\end{cor}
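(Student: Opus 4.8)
The plan is to obtain the statement by directly differentiating the defining relations for $\eta_i$ recorded in the third part of Theorem~\ref{indpar}, exploiting that the restriction map $q \mapsto q|_{V_j}$ is linear and hence commutes with differentiation. First I would fix $i$ together with a tangent vector $v \in T_{p_i}\cal{U}_i$, and represent it by a smooth curve $c:(-\epsilon,\epsilon)\to\cal{U}_i$ with $c(0)=p_i$ and $\dot c(0)=v$. By definition of the derivative, $D_{p_i}\eta_i(v)=\frac{d}{dt}\big|_{t=0}\eta_i(c(t))$; and since $q\mapsto q|_{V_j}$ is the restriction of a fixed linear projection $\R^{2n}\to\R^{2|V_j|}$, it passes through the $t$-derivative, so that
$$
D_{p_i}\eta_i(v)|_{V_j}=\frac{d}{dt}\Big|_{t=0}\big(\eta_i(c(t))|_{V_j}\big)
$$
for each $j=1,\ldots,m$.

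Next I would treat the two branches separately. For $j=i$, Theorem~\ref{indpar} gives $\eta_i(c(t))|_{V_i}=c(t)$ identically in $t$, so the right-hand side is $\dot c(0)=v$, yielding the first branch. For $j\neq i$, the same theorem guarantees $\eta_i(c(t))|_{V_j}\in O_{p_j}$ for all $t$, while $\eta_i(p_i)=p$ and $p|_{V_j}=p_j$ show that this curve is based at $p_j$. Thus $t\mapsto\eta_i(c(t))|_{V_j}$ is a smooth curve lying in the orbit $O_{p_j}$ and passing through $p_j$; its velocity at $t=0$ is therefore, by the very definition of the tangent space of a submanifold, an element of $T_{p_j}O_{p_j}$. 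Combined with the displayed identity, this gives the second branch.

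I do not expect a genuine obstacle here, since the argument is a single differentiation. The only two points that need care are verifying that restriction to a vertex subset commutes with differentiation—immediate from its linearity—and invoking that $O_{p_j}$ is an honest embedded submanifold (of dimension $3$, as noted before the statement) so that a curve constrained to $O_{p_j}$ has velocity in $T_{p_j}O_{p_j}$. The differentiability required to form all these derivatives is precisely the smoothness of $\eta_i$ asserted in Theorem~\ref{indpar}, so no additional regularity hypotheses are needed.
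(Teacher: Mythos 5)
Your proof is correct and follows the same route the paper intends: the paper states the corollary as an immediate consequence of the last part of Theorem~\ref{indpar}, and your argument (differentiating the identities $\eta_i(\cdot)|_{V_i}=\mathrm{id}$ and $\eta_i(\cdot)|_{V_j}\in O_{p_j}$ along a curve, using linearity of restriction and that $O_{p_j}$ is an embedded submanifold) is precisely the formalization of that step.
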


In the remainder of this section, we establish properties of independent partitions that are needed to prove Theorem~\ref{indpar}. The first part of Theorem~\ref{indpar} follows from the definition of independent partitions. We prove parts~2 and~3 of Theorem~\ref{indpar} in Subsections~\S\ref{UNIQS} and~\S\ref{Shape}, respectively.

\subsection{On edge-set partitions}\label{UNIQS}
In this subsection, we establish the second part of Theorem~\ref{indpar}. 
Let $G = (V,E)$ be a triangulated Laman graph, and $(G,p)$ be a framework.  Let $\Sigma(G,p)$ be the collection of partitions  of the edge-set $E$ of $G$ that satisfy the condition in part~1 of Theorem~\ref{indpar}. 

There is a natural partial order on $\Sigma(G,p)$ reflecting the granularity of the partition. Specifically, let  $\sigma = \{E_1,\cdots, E_m\}$ and $\sigma' = \left \{E'_1,\cdots, E'_{m'} \right \}$ be two different partitions in $\Sigma(G,p)$; then, we say that $\sigma$ is {\bf coarser} than $\sigma'$, or simply write $\sigma\succ\sigma'$,  if each $E'_i$ is a subset of some $E_j$. 
An element $\sigma$ in $\Sigma(G,p)$ is said to be a {\bf maximal} (resp. {\bf minimal}) element if there does not exist an element $\sigma' \neq \sigma$ in $\Sigma(G,p)$ such that $\sigma' \succ \sigma$ (resp. $\sigma' \prec \sigma$).  
Note that $\Sigma(G,p)$ has a {\it unique} minimal element $\sigma_- := \{(i,j) \mid (i,j)\in E \}$, i.e., each subset $E_k$ of the partition is a singleton $\{(i,j)\}$, for $(i,j)\in E$. The independent partition is by  definition an element of $\Sigma(G,p)$. We now show that it is in fact the unique maximal  element according to the partial order `$\succ$':

\begin{pro}\label{PARMAX} 
Let $(G,p)$ be a framework, with $G$ a triangulated Laman graph and $p\in P_G$, and let $\Sigma(G,p)$ be the (partially ordered) set of partitions of $E$ satisfying the condition in part~1 of Theorem~\ref{indpar}.  Then, the independent partition for $(G,p)$ is the unique maximal element in $\Sigma(G,p)$.      
\end{pro}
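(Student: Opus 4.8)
The plan is to prove the stronger statement that the independent partition $\sigma_{\mathrm{ind}}$ is in fact the \emph{greatest} element of the poset $(\Sigma(G,p),\succ)$, i.e.\ that $\sigma_{\mathrm{ind}}\succ\sigma$ for every $\sigma\in\Sigma(G,p)$; since a greatest element is automatically the unique maximal element, this yields the proposition (and incidentally also establishes part~2 of Theorem~\ref{indpar}). I would argue by induction on the number of vertices $n$. The base case $n=2$ is trivial, as $E$ is a single edge and $\Sigma(G,p)$ is a singleton. For the inductive step, I fix a Henneberg sequence of $G$ and let $n$ be the last vertex to appear; as observed following Definition~\ref{def:triangLaman}, $n$ has degree~$2$, with neighbors $i,j$ forming a $3$-cycle $(i,j,n)$. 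Write $G^*=G-n$ with induced sub-configuration $p^*$; then $G^*$ is a triangulated Laman graph on $n-1$ vertices and $E^*=E\setminus\{(i,n),(j,n)\}$.

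The crux is to understand how an arbitrary $\sigma=\{E_1,\dots,E_m\}\in\Sigma(G,p)$ must treat the two edges $(i,n)$ and $(j,n)$, using the structural facts that a triangulated Laman graph on $\ge 3$ vertices has minimum degree~$2$, that a degree-$2$ vertex of a triangulated Laman graph has \emph{adjacent} neighbors, and that deleting such a degree-$2$ vertex leaves a triangulated Laman graph (all provable from the Henneberg characterization). I would show there are exactly two possibilities. If $(i,n)$ and $(j,n)$ lie in different blocks of $\sigma$, then in each such block $n$ has degree~$1$; since each block is a triangulated Laman graph, the minimum-degree-$2$ property forces these blocks to be the singletons $\{(i,n)\}$ and $\{(j,n)\}$. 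If instead $(i,n),(j,n)$ lie in a common block $E_a$, then $n$ has degree~$2$ in the triangulated Laman graph $G_a$, so its neighbors $i,j$ are adjacent in $G_a$, i.e.\ $(i,j)\in E_a$; moreover $G_a$ being a line framework forces $x_i,x_j,x_n$ to be collinear. This dichotomy matches precisely Cases~II and~I of Definition~\ref{def:indpar}: the common-block possibility can occur only when $x_i,x_j,x_n$ are collinear.

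Given $\sigma$, I would then build its restriction $\sigma^*$ to $G^*$: delete the two singletons $\{(i,n)\},\{(j,n)\}$ in the first possibility, or replace $E_a$ by $E_a\setminus\{(i,n),(j,n)\}$ (which contains $(i,j)$ and, by the deletion fact above, is a triangulated Laman line framework on $V_a\setminus\{n\}$) in the second. In both cases $\sigma^*\in\Sigma(G^*,p^*)$, so by the induction hypothesis $\sigma^*_{\mathrm{ind}}\succ\sigma^*$, where $\sigma^*_{\mathrm{ind}}$ is the independent partition of $(G^*,p^*)$. Finally I would compare $\sigma_{\mathrm{ind}}$, which is obtained from $\sigma^*_{\mathrm{ind}}$ exactly by the inductive step of Definition~\ref{def:indpar} (adjoining $(i,n),(j,n)$ to the block of $(i,j)$ when $x_i,x_j,x_n$ are collinear, and as two singletons otherwise), against $\sigma$. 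A short check in each of the collinear and non-collinear cases shows every block of $\sigma$ is contained in a block of $\sigma_{\mathrm{ind}}$: blocks not meeting $n$ are handled by $\sigma^*_{\mathrm{ind}}\succ\sigma^*$, while $(i,n)$ and $(j,n)$ land in the correct block of $\sigma_{\mathrm{ind}}$ by the matching of cases established above. Hence $\sigma_{\mathrm{ind}}\succ\sigma$, completing the induction.

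The main obstacle I anticipate is the structural lemma that deleting a degree-$2$ vertex with adjacent neighbors preserves the triangulated Laman property (together with the minimum-degree-$2$ claim), since this is what simultaneously guarantees that $\sigma^*\in\Sigma(G^*,p^*)$ and that a block containing both $(i,n)$ and $(j,n)$ must also contain $(i,j)$. These facts are the linchpin that lets the induction reduce cleanly from $(G,p)$ to $(G^*,p^*)$; once they are in hand, the remaining case analysis and the bookkeeping comparing $\sigma$ with $\sigma_{\mathrm{ind}}$ are routine.
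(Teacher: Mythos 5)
Your proposal is correct, but it takes a genuinely different route from the paper. The paper's proof is direct (not inductive): given a competing partition $\sigma'=\{E'_1,\ldots,E'_{m'}\}$, it invokes Lemma~\ref{BASEIR} to produce, for each block $E'_i$, a Henneberg sequence of $G$ in which $G'_i$ is a \emph{leading} subgraph, and then runs Definition~\ref{def:indpar} along that sequence: since $(G'_i,p'_i)$ is a line framework, every vertex-add inside the leading portion falls into Case~I, so all of $E'_i$ is absorbed into a single block of the independent partition; well-definedness (Lemma~\ref{INDHC}) then lets one conclude. You instead induct on $n$, peel off the last (degree-$2$) vertex of a fixed Henneberg sequence, and classify how an arbitrary $\sigma$ can treat the two edges at that vertex, which forces either two singletons or a common block containing $(i,j)$; the reduction $\sigma\mapsto\sigma^*$ then lets the induction hypothesis finish. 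The trade-off is roughly this: the paper's argument is shorter \emph{in context} because Lemma~\ref{BASEIR} is already needed elsewhere (it reappears in the proof of part~3 of Theorem~\ref{indpar}), and it never has to analyze the local structure of a competing partition; your argument avoids Lemma~\ref{BASEIR} entirely but must pay for it with the structural lemmas you flag --- minimum degree~$2$, adjacency of the neighbors of a degree-$2$ vertex, and closure of the class of triangulated Laman graphs under deletion of a degree-$2$ vertex. All three are true and provable by induction along Henneberg sequences (the deletion-closure fact is essentially implicit in the paper's proof of Lemma~\ref{INDHC}, where a degree-$2$ vertex is moved to the end of the sequence), so your plan is sound; note that both proofs, yours and the paper's, actually establish the stronger ``greatest element'' claim, and both rely on Lemma~\ref{INDHC} to make ``the'' independent partition well defined.
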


We establish below Proposition~\ref{PARMAX}.  Let $G = (V,E)$ be a triangulated Laman graph of $n $ vertices, and let $\G = \{G(l)\}^n_{l = 1}$ be a Henneberg sequence of $G$. Let 
$G'  = (V',E')$ be a subgraph of $G$; we say that $G'$ is a {\bf leading subgraph} of $\G$ if $G'=G(|V'|)$. 
We have the following fact:

\begin{lem}\label{BASEIR} 
Let $G = (V, E)$ and $G' = (V',E')$ be triangulated Laman graphs of at least two vertices, with $G'$ a subgraph of $G$. Then,  there is a Henneberg sequence of $G$ with $G'$ a leading subgraph.   
\end{lem}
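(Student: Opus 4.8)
The plan is to argue by induction on the number of vertices $n = |V|$, at each stage peeling off the last vertex of some Henneberg sequence of $G$ while arranging that $G'$ gets built first. The base case $n = 2$ is immediate: a triangulated Laman subgraph $G'$ on at least two vertices of the single-edge graph $G$ must equal $G$, so the trivial sequence has $G'$ leading. For the inductive step, if $V' = V$ then $G$ and $G'$ are triangulated Laman graphs on the same vertices, hence both have $2n-3$ edges, and $E' \subseteq E$ forces $G' = G$; any Henneberg sequence then works. So I may assume $W := V \setminus V' \neq \emptyset$. Fix a Henneberg sequence of $G$ and let $v_N$ be its last vertex. By the construction in Definition~\ref{def:triangLaman}, $v_N$ has degree $2$, its two neighbours are adjacent, and $G - v_N$ is triangulated Laman on $n-1$ vertices.

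If $v_N \in W$, the argument is easy: $G'$ is then a triangulated Laman subgraph of $G - v_N$, so the induction hypothesis gives a Henneberg sequence of $G - v_N$ with $G'$ leading, and appending $v_N$ (to its two neighbours, which are adjacent because $v_N$ is the last vertex of the fixed sequence) yields the desired sequence of $G$.

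The crux is the case $v_N \in V'$, where simply peeling $v_N$ would push it to the end and destroy the ``leading'' property; the idea is instead to re-insert $v_N$ \emph{early}. I would first record two structural facts, both consequences of the vertex-add operation: (B) in a triangulated Laman graph every degree-$2$ vertex has adjacent neighbours, and (C) deleting a degree-$2$ vertex again yields a triangulated Laman graph (the reverse of a vertex-add); I also use that a triangulated Laman graph on $\ge 3$ vertices has minimum degree $\ge 2$. Suppose $|V'| \ge 3$. Since $\deg_G(v_N) = 2$ while $G'$ has minimum degree $\ge 2$, both neighbours $a,b$ of $v_N$ lie in $V'$ and are neighbours of $v_N$ in $G'$; by (B) applied to $G'$ they are adjacent in $G'$, and by (C) the graph $G' - v_N$ is triangulated Laman. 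The induction hypothesis applied to $G - v_N \supseteq G' - v_N$ gives a Henneberg sequence of $G - v_N$ whose first $|V'|-1$ vertices build $G' - v_N$; I then insert $v_N$ at position $|V'|$, attaching it to the already-present adjacent pair $a,b$, which completes $G'$ exactly at step $|V'|$, and keep all later steps unchanged. Because inserting $v_N$ only adds one vertex and two edges, every subsequent attachment is still to an existing adjacent pair, so the result is a valid Henneberg sequence of $G$ with $G'$ leading.

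The last case, $v_N \in V'$ with $|V'| = 2$, is genuinely different: $G'$ is then a single edge $(a',b')$, and the endpoint $v_N$ (say $v_N = a'$) may have its other neighbour $c$ in $W$, so the minimum-degree argument no longer confines both neighbours to $V'$. Here I would reroute through an auxiliary edge. Writing $a'$'s neighbours as $b'$ and $c$ (adjacent by (B), so that $c$ is adjacent to both $a'$ and $b'$), I apply the induction hypothesis to $G - a'$ with the two-vertex target $(b',c)$ to obtain a Henneberg sequence of $G - a'$ starting from that edge; then I build $G$ by starting with $a',b'$ so that $G(2)$ is exactly $G'$, adding $c$ to the pair $(a',b')$ to form the triangle on $\{a',b',c\}$, and thereafter following the sequence of $G - a'$ from its third vertex on. This reconstructs $G$ with $G'$ leading and closes the induction. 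I expect the case $v_N \in V'$ — and specifically checking that each re-insertion or rerouting produces a legitimate Henneberg sequence — to be the delicate part, with the supporting facts (B) and (C) (the latter relying on the observation that for $n \ge 4$ no two adjacent vertices can both have degree $2$) carrying most of the combinatorial weight.
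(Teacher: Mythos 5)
Your proof is correct and follows essentially the same route as the paper's: induction on the number of vertices, peeling off the degree-$2$ last vertex $v_N$ of a Henneberg sequence, splitting on whether $v_N$ lies in $G'$, and handling the two-vertex $G'$ case by routing through the $3$-cycle on $v_N$ and its two neighbours. The only substantive difference is that you state and justify the structural facts (B) and (C) --- that degree-$2$ vertices of a triangulated Laman graph have adjacent neighbours and that deleting one leaves a triangulated Laman graph --- which the paper's proof uses tacitly (e.g.\ when it applies the induction hypothesis to $G''=G'-k$), so your write-up is, if anything, more complete.
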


\begin{proof} 
The proof is carried out by induction on the number of vertices of $G$. For the base case $n = 2$, the lemma is trivially true. For the inductive step, assuming that the lemma holds for $n <k$, we prove it for $n =k$. 
Without loss of generality, we assume that vertex $k$ is of degree~$2$, adjacent to vertices~$i$ and~$j$.  
Let $G^* = (V^*, E^*)$ be the subgraph of $G$ induced by vertices $V^* := \{1,\cdots, k-1\}$. There are two cases to consider: 

{\it Case I}. Suppose that $k$ is  not a vertex of $G'$; then $G'$ is a subgraph of $G^*$. By the induction hypothesis,  we can choose a Henneberg sequence of $G^*$ with $G'$ its leading subgraph.  Then, following the sequence, we construct $G$ by joining the vertex $k$ to vertices~$i$ and~$j$.  

{\it Case II}. Suppose that $k$ is a vertex of $G'$. Let $k'$ be the number of vertices of $G'$. We first assume that $k' \ge 3$. Note that the degree of vertex~$k$ is $2$, and $k$ is adjacent to vertices~$i$ and $j$.  Hence, the graph $G'$ contains all the three vertices~$i$, $j$, and $k$. Let $G'' = (V'', E'')$ be a subgraph of $G'$ induced by $V'': = V' - \{k\}$. Then, $G''$ is a graph of $(k'-1)$ vertices; in particular, it contains vertices~$i$ and~$j$. Since $G''$ is a subgraph of $G^*$, by the induction hypothesis, there is a Henneberg sequence 
$
\G^* = \{G^*(l)\}^{k-1}_{l = 1}
$ 
of $G^*$, 
with $G''$ a leading subgraph, i.e., $G^*(k'-1) = G''$. We further note that for all $l \ge k'$, the graph $G^*(l)$ contains the two vertices~$i$ and~$j$. 
We now define a Henneberg sequence $\G = \{G(l)\}^k_{l = 1}$ of $G$ as follows: 
\begin{enumerate}
\item For $l = 1,\ldots, k'-1$, let  $G(l) := G^*(l)$. 
\item For $l = k'$, let $G(k') := G'$; 
\item For $l = k' + 1, \ldots, k$, let $G(l)$ be defined by attaching vertex~$k$ to vertices~$i$ and~$j$ in $G^*(l - 1)$ via edges $(i,k)$ and $(j,k)$.
\end{enumerate}
From the construction, we conclude that $\G$ is a Henneberg sequence of $G$ with $G'$ a leading subgraph. 

We now assume that $k' = 2$, i.e., $G'$ has only two vertices. Without loss of generality, we assume that $G'$ is comprised of vertices~$i$ and~$k$, with a single edge $(i,k)$. Let $C$ be the $3$-cycle $(i,j,k)$; then clearly $G'$ is a subgraph of $C$. By the arguments above, there exists a Henneberg sequence of $\G = \{G(l)\}^k_{l = 1}$ with $C$ a leading subgraph, i.e., $G(3) = C$. We then modify $G(1)$ and $G(2)$ in the sequence, if necessary, such that $G(1)$ is the graph comprised of a single vertex~$i$, and $G(2) = G'$.  This completes the proof. 
\qquad\end{proof}

With Lemma~\ref{BASEIR} at hand, we prove Proposition~\ref{PARMAX}.

{\em Proof of Proposition~\ref{PARMAX}}. Let $\sigma =\{E_1,\cdots,E_m\}$ be the independent partition for $(G,p)$. Given another arbitrarily chosen partition $\sigma'=\{ E'_1,\cdots,E'_{m'}\}$ in $\Sigma(G,p)$, we show that $\sigma \succ\sigma'$. Denote by $\{G_i = (V_i,E_i), p_i\}^m_{i=1}$ (resp. $\{G'_i = (V'_i,E'_i), p'_i\}^{m'}_{i = 1}$) the frameworks  associated with $\sigma$ (resp. $\sigma'$). It suffices to  show that each $G'_i$ is a subgraph of $G_j$ for some $j$.  Since $G'_i$ is, by assumption, a triangulated Laman graph,  we conclude from Lemma~\ref{BASEIR} that there is a Henneberg sequence $\G$ of $G$ with $G'_i$ a leading subgraph. Now, suppose that $G'_i$ has $k$ vertices, with $V'_i = \{i_1,\cdots, i_k\}$, and that $\(i_1,i_2\)$ is the first edge appearing in $\G$. Let $G_j$ be the subgraph which contains
$\(i_1,i_2\)$ as an edge.  Note that $(G'_i,p'_i)$ is, by assumption, a line framework, and hence, by using $\G$ to construct the independent partition as introduced in Definition~\ref{def:indpar}, we conclude that all the edges of $G'_i$ are edges of $G_j$. This completes the proof.  
\qquad\endproof

\begin{Remark}\label{rmk:indepedentpartitionforstronglyconf}\normalfont
When  $p$ is strongly rigid,  the independent partition for $(G,p)$ is simply given by $\{(i,j) \mid (i,j)\in E \}$. Then, the maximal element 
of $\Sigma(G,p)$ coincides with the minimal element, and hence the set $\Sigma(G,p)$ is a singleton. $\qed$   
\end{Remark}

The second part of Theorem~\ref{indpar}  follows from Proposition~\ref{PARMAX}. 

\subsection{Partition-adapted perturbations}\label{Shape} 

In this subsection, we show that we can perturb the positions of the agents in a sub-framework while keeping the shapes of the other sub-frameworks unchanged. We refer to such perturbations as {\it perturbations adapted to an independent partition}. This will establish the last part of Theorem~\ref{indpar}. We first consider the simple case when $p $ is comprised of only three agents. Let a configuration $p = (x_1, x_2, x_3)\in \R^6$, with $x_i \in \R^2$ for $i = 1,2, 3$,  be a {\it nondegenerate} triangle. The following result formalizes the fact that  the positions of $x_1,x_2$, and $x_3$ can be perturbed while keeping the distances $\|x_3-x_1\|$ and $\|x_3-x_2\|$ fixed:

\begin{lem}\label{SIMPLE} 
Let $ x_1$, $ x_2$, and $ x_3$ be in $\R^2$, which form a nondegenerate triangle. Then, there is an open neighborhood $U$ of $(x_1,x_2)$  in $\R^4$, and a unique smooth map 
$\xi: U \longrightarrow \R^2$ 
such that $\xi(x_1, x_2) = x_3$, and moreover, for any $(x'_1, x'_2)$ in $U$, we have 
$$
\left\{
\begin{array}{l}
\left \|\xi(x'_1,x'_2) - x'_1 \right \|  = \| x_3 - x_1 \|, \vspace{3pt}\\
\left \|\xi(x'_1,x'_2) - x'_2 \right \|  = \| x_3 - x_2 \|.
\end{array}
\right.
$$\,
\end{lem}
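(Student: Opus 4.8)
The plan is to prove Lemma~\ref{SIMPLE} as a direct application of the Implicit Function Theorem. Define the map
\begin{equation*}
F:\R^2\times\R^4\longrightarrow \R^2,\qquad F(y;x_1',x_2'):=\begin{pmatrix}\|y-x_1'\|^2-\|x_3-x_1\|^2\\[2pt]\|y-x_2'\|^2-\|x_3-x_2\|^2\end{pmatrix},
\end{equation*}
where $y\in\R^2$ is the ``unknown'' position we wish to solve for and $(x_1',x_2')\in\R^4$ is the perturbed data. I use squared distances rather than distances themselves so that $F$ is smooth (indeed polynomial) in a full neighborhood of the solution, avoiding the non-differentiability of the Euclidean norm at the origin; since we work near a nondegenerate triangle, all the relevant distances are strictly positive, so solving $\|y-x_i'\|^2=\|x_3-x_i\|^2$ is equivalent to solving the unsquared equations for $y$ near $x_3$. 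By construction $F(x_3;x_1,x_2)=0$, giving the base point required by the theorem.

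The key step is to verify that the partial Jacobian of $F$ with respect to the unknown $y$, evaluated at $(x_3;x_1,x_2)$, is invertible. A direct computation gives $\partial F/\partial y$ with rows $2(y-x_1')^\top$ and $2(y-x_2')^\top$; evaluated at the base point this is the $2\times 2$ matrix whose rows are $2(x_3-x_1)^\top$ and $2(x_3-x_2)^\top$. This matrix is nonsingular precisely when $x_3-x_1$ and $x_3-x_2$ are linearly independent, which is exactly the nondegeneracy hypothesis on the triangle $x_1,x_2,x_3$. Hence the IFT applies: there is an open neighborhood $U$ of $(x_1,x_2)$ in $\R^4$ and a unique smooth map $\xi:U\to\R^2$ with $\xi(x_1,x_2)=x_3$ and $F(\xi(x_1',x_2');x_1',x_2')=0$ for all $(x_1',x_2')\in U$. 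Rewriting the two scalar equations $F=0$ as the two displayed distance equations (using positivity of the distances to pass back from squared to unsquared norms) finishes the proof.

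The main obstacle, such as it is, is bookkeeping rather than conceptual: I must ensure that the nondegeneracy assumption feeds into invertibility of the right block, and that the linear independence of $x_3-x_1$ and $x_3-x_2$ is genuinely equivalent to that of $x_2-x_1$ and $x_3-x_1$ used elsewhere in the paper (it is, by an elementary column operation). I would also note that shrinking $U$ if necessary keeps $\xi(x_1',x_2')$ close enough to $x_3$ that it remains distinct from $x_1'$ and $x_2'$, so the output is a genuine point of the configuration space and the passage from squared to unsquared distances is valid throughout $U$. Uniqueness of $\xi$ is delivered by the IFT directly. This local result is exactly the three-agent prototype for part~3 of Theorem~\ref{indpar}, where $\xi$ plays the role of the single coordinate of the added degree-$2$ vertex and the general map $\eta_i$ is assembled by composing such solved-vertex maps along a Henneberg sequence.
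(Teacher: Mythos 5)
Your proposal is correct and follows essentially the same route as the paper's own proof: the paper defines the map $h(x'_1,x'_2,x'_3)=\tfrac12\bigl(\|x'_3-x'_1\|^2-\|x_3-x_1\|^2,\ \|x'_3-x'_2\|^2-\|x_3-x_2\|^2\bigr)$, notes that its Jacobian in $x'_3$ has rows $(x'_3-x'_1)^\top$ and $(x'_3-x'_2)^\top$, which is nonsingular by nondegeneracy of the triangle, and invokes the implicit/inverse function theorem to obtain the unique smooth $\xi$. Your use of squared distances, the invertibility check, and the passage back to unsquared norms match the paper's argument in all essentials.
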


\begin{proof} 
Choose open neighborhoods  $U$ of $(x_1,x_2)$ in $\mathbb{R}^4$ and  $V$ of $x_3$ in $\R^2$ such that any triangle $p'=( x'_1, x'_2, x'_3)$, for $(x'_1,x'_2)\in U$ and $x'_3 \in V$,   is nondegenerate. Define a smooth function $h:U\times V\longrightarrow \mathbb{R}^2$ by 
\begin{equation*}
h( x'_1, x'_2, x'_3) := \frac{1}{2}
\begin{pmatrix}
\| x'_3- x'_1\|^2-\| x_3- x_1\|^2 \\
\| x'_3- x'_2\|^2 - \| x_3- x_2\|^2
\end{pmatrix}.
\end{equation*}
By computation, the derivative of $h$ with respect to $x'_3$ is given by 
\begin{equation}\label{DPHI}
\frac{\partial h( x'_1, x'_2, x'_3)}{\partial  x'_3}=
\begin{pmatrix}
 x'^\top_3- x'^\top_1\\
 x'^\top_3- x'^\top_2
\end{pmatrix} \in \R^{2\times 2}, 
\end{equation}
which is nonsingular for all $p' = (x'_1,x'_2,x'_3)$ in $U\times V$.  
Then, by the inverse function theorem,  there exists a unique smooth function $\xi: U \to V$ such that $\xi( x_1, x_2) =  x_3$, and moreover,  
\begin{equation*}
h\( x'_1,  x'_2,\xi( x'_1, x'_2)\) = 0.
\end{equation*}
This completes the proof.
\qquad\end{proof}

To establish the last part of Theorem~\ref{indpar}, we further need the following fact. First, recall that the distance function $\rho_{G}: P_{G}\longrightarrow \mathbb{R}^{|E|}_+$ is given by  
\begin{equation*}
\rho_{G}(p) := \(\cdots, \| x_i -  x_j\|^2,\cdots\)_{(i,j)\in E}.  
\end{equation*} 
It should be clear that if $p'\in O_p$, then $\rho_G(p') = \rho_G(p)$.  In other words, the orbit $O_p$ is a subset of the pre-image $\rho^{-1}_G(\rho_G(p))$. In general, the pre-image $\rho^{-1}_G(\rho_G(p))$ contains more than one orbit since there are more than one non-congruent framework with the same edge lengths. When $(G,p)$ is a line framework, however, matters simplify greatly as shown in the next lemma:

\begin{lem}\label{lem:preimagecontainsasingleorbit}
Let $(G, p)$ be a line framework, with $G$ a triangulated Laman graph and $p\in P_G$. Then, 
$
\rho^{-1}_G(\rho_G( p ) ) = O_p.
$ 
\end{lem}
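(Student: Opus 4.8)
The plan is to prove the statement by induction on the number $n$ of vertices of $G$, peeling off the last vertex of a Henneberg sequence. The inclusion $O_p \subseteq \rho_G^{-1}(\rho_G(p))$ is immediate, since every element of $\SE(2)$ preserves all pairwise distances; the content lies in the reverse inclusion, which asserts that a \emph{line} configuration is rigid in the strong sense that its edge lengths determine it up to a single rigid motion. For the base case $n = 2$, the graph $G = G(2)$ consists of a single edge $(1,2)$, and any two pairs of points realizing the same distance differ by an element of $\SE(2)$; hence $\rho_G^{-1}(\rho_G(p)) = O_p$.

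For the inductive step, I would fix a Henneberg sequence $\G = \{G(l)\}^n_{l=1}$ of $G$ and let vertex $n$ be the last (degree-$2$) vertex, adjacent to vertices $i$ and $j$, so that $(i,j,n)$ is a $3$-cycle and $(i,j)\in E$. Writing $G^* = G(n-1)$ for the leading subgraph on $V^* := \{1,\ldots,n-1\}$, this is again a triangulated Laman graph, and the induced sub-configuration $p^* := p|_{V^*}$ is again a line configuration, so $(G^*,p^*)$ is a line framework. Given any $p'$ with $\rho_G(p') = \rho_G(p)$, restricting to the edges of $G^*$ gives $\rho_{G^*}(p'|_{V^*}) = \rho_{G^*}(p^*)$, so by the induction hypothesis $p'|_{V^*} = \gamma \cdot p^*$ for some $\gamma \in \SE(2)$. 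Replacing $p'$ by $\gamma^{-1}\cdot p'$, which preserves all edge lengths, I may assume $p'|_{V^*} = p^*$; it then suffices to show that the remaining coordinate satisfies $p'|_n = p|_n$.

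The crux, and the only place where the line hypothesis is essential, is pinning down $x_n' := p'|_n$. It must satisfy $\|x_n' - x_i\| = \|x_n - x_i\|$ and $\|x_n' - x_j\| = \|x_n - x_j\|$, so it lies in the intersection of the two circles centered at $x_i$ and $x_j$ with radii $\|x_n - x_i\|$ and $\|x_n - x_j\|$. Because $(G,p)$ is a line framework, the three points $x_i$, $x_j$, $x_n$ are collinear, which forces the triangle inequality among $\|x_i - x_j\|$, $\|x_n - x_i\|$, $\|x_n - x_j\|$ to hold with equality; geometrically the two circles are \emph{tangent} and therefore meet in a single point. Since $x_n$ is itself one such intersection point, uniqueness gives $x_n' = x_n$, whence $p' = p$, and undoing the normalization yields $p' = \gamma \cdot p \in O_p$. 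This closes the induction.

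I expect the tangency step to be the main, and essentially the only subtle, obstacle. In a generic non-aligned framework the two circles meet in \emph{two} points, the reflection of $x_n$ across the line through $x_i$ and $x_j$ being the spurious second solution; this is precisely the mechanism that produces the $2^{n-2}$ distinct orbits in $\rho_G^{-1}$ for non-line frameworks. Collinearity collapses this pair of solutions into one at every stage of the Henneberg construction, so no reflection ambiguity ever arises and the preimage reduces to the single orbit $O_p$.
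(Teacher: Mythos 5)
Your proposal is correct and takes essentially the same approach as the paper: induction along a Henneberg sequence, applying the induction hypothesis to the leading subgraph $G^*$ on $\{1,\ldots,n-1\}$, and then using collinearity of $x_i$, $x_j$, $x_n$ to conclude that the degree-$2$ vertex's position is uniquely determined by its two edge lengths. The paper phrases this last step as congruence of triangles, one of which is degenerate (and carries an explicit unit direction vector $v$ through the induction rather than normalizing $p'$ by a group element $\gamma^{-1}$), but the geometric content is identical to your circle-tangency argument.
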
  

\begin{proof}
Let $p = (x_1,\ldots, x _n)$; since $p$ is a line configuration, its orbit $O_p$ can be characterized as follows: a configuration $p' = (x'_1,\ldots, x' _n)$ is in $O_{p}$ if and only if  there exist a vector $v$ in $\R^2$, with $\|v\| = 1$, such that
\begin{equation}\label{eq:lineconfigurationorbit}
x'_{i} - x'_1 =   \|x_{i} - x_{1}  \| \, v, \hspace{10pt} \forall\, i = 2,\ldots, n.
\end{equation}
We now show that if a configuration $p' $ lies in $\rho^{-1}_G(\rho_G(p))$, then there is a vector $v$ of unit length such that $p'$ satisfies~\eqref{eq:lineconfigurationorbit}. 

We prove this fact by induction on the number of vertices of $G$. For the base case $n = 2$, we have $p = (x_1, x_2)$, with $x_1 \neq x_2$. If $p' = (x'_1,x'_2)$ lies in $\rho^{-1}_G(\rho_G(p))$, then
$\|x'_2 - x'_1\| = \|x_2 - x_1\|$, and hence  
$$
x'_2 - x'_1 = \|x_2 - x_1\| \frac{x'_2 - x'_1}{\|x'_2 - x'_1\|}, 
$$
which implies that $p'\in O_p$. For the inductive step, assuming that the statement holds for $n < k-1$, with $k \ge 3$, we prove it for $n = k$. Without loss of generality, we assume that the vertex $k$ is of degree $2$, adjacent to vertices~$i$ and~$j$. Let $G^* = (V^*, E^*)$ be the subgraph induced by $V^* := \{1,\ldots, k-1\}$, and $p^*\in P_{G^*}$ be the sub-configuration of $p$. Let $p' $ be in $\rho^{-1}_{G}(\rho_G(p))$, and $p'^* \in P_{G^*}$ be the sub-configuration of $p'$. Note that $(G^*,p^*)$ is a line framework, with $G^*$ a triangulated Laman graph, and moreover, $\rho_{G^*}(p'^*) = \rho_{G^*}(p^*)$. Hence, by the induction hypothesis, we have a vector~$v$ of unit length such that   
\begin{equation}\label{eq:inductiongetdistanceequal}
x'_i - x'_1 = \|x_i - x_1\|\, v, \hspace{10pt} \forall i = 2,\ldots, k-1. 
\end{equation}
On the other hand, for the $3$-cycle $(i,j,k)$, we have 
\begin{equation}\label{eq:degeneratetriangle}
\left\{
\begin{array}{l}
\|x'_k - x'_i \| = \|x_k - x_i\|, \\
\|x'_k - x'_j\| = \|x_k - x_j\|, \\
\|x'_i - x'_j\| = \|x_i - x_j\|.
\end{array}
\right.
\end{equation}
This, in particular, implies that the two triangles formed by $(x_i,x_j,x_k)$ and by $(x'_i,x'_j,x'_k)$ are congruent.  Since $(x_i,x_j,x_k)$ is aligned, so is $(x'_i,x'_j,x'_k)$. This, in particular, implies that there is a unique solution for $x'_k$ such that~\eqref{eq:degeneratetriangle} holds, which is given by 
$
x'_k = x'_1 + \|x_k - x_1\| \, v 
$. 
Combining this fact with~\eqref{eq:inductiongetdistanceequal}, we then conclude that $p'\in O_p$. 
\qquad\end{proof}

With Lemmas~\ref{SIMPLE} and~\ref{lem:preimagecontainsasingleorbit} at hand, we now prove the last part of Theorem~\ref{indpar}:

{\em Proof of the last part of Theorem~\ref{indpar}}. 
First, note that from Lemma~\ref{lem:preimagecontainsasingleorbit}, it suffices to prove that for each $i = 1,\ldots, m$, there exists an open neighborhood $\cal{U}_i$ of $p_i$ in $P_{G_i}$, and a unique smooth map $\eta_i: \cal{U}_i\longrightarrow P_G$ such that the following hold: 
\begin{enumerate}
\item $\eta_i(p_i) = p$.
\item For all $p'_i \in \cal{U}_i$, we have 
\begin{equation}\label{eq:keepidentityonitself}
\eta_i(p'_i) |_{V_i} = p'_i,
\end{equation} 
and for all $(j,k)\notin E_i$, 
\begin{equation}\label{eq:keepingdistances}
\|\eta_i(p'_i) |_j - \eta_i(p'_i) |_k \|= \|x_j - x_k\|.
\end{equation}
\end{enumerate}\,
To see this, note that if~\eqref{eq:keepingdistances} holds, then for any $j \neq i$, we have 
$
\eta_i(p'_i) |_{V_j} \in \rho^{-1}_{G_j}(\rho_{G_j}(p_j))
$, 
and hence by Lemma~\ref{lem:preimagecontainsasingleorbit}, we have $\eta_i(p'_i) |_{V_j} \in O_{p_j}$. 

We now prove the statement above. In the remainder of the proof, we fix $i = 1$.  From Lemma~\ref{BASEIR}, there is a Henneberg sequence  of $G$:
$$\G = \{G(l)=  (V(l), E(l)) \}^n_{l = 1}$$  such that  $G_1$ is a leading subgraph of $\G$. The map $\eta_1$ is then defined along $\G$.  
Label the vertices of $G$ with respect to the order they appear in the sequence.  
Suppose that $G_1 = (V_1, E_1)$ has $n_1$ vertices, with $V_1 = \{1,\ldots, n_1\}$. We choose an open neighborhood $\cal{U}_1$ of $p_1$  in $\mathbb{R}^{2n}$, and along the proof, we may shrink $\cal{U}_1$ if necessary. Let $p'_1 = (x'_1,\ldots, x'_{n})$ be in $\cal{U}_1$. We define $\eta_1(p'_1)$ by subsequently specifying $\eta_1(p'_1)| _i$ for $i =1,\ldots,  n$.

{\it Initial step}. Starting with $G(n_1) = G_1$, we define $\eta_1(p'_1)|_i = x'_i$, for $i = 1,\ldots, n_1$, so that~\eqref{eq:keepidentityonitself} is satisfied. 

{\it Recursive step}. We assume that  $\eta_{1}(p'_1) |_i$, for $i\le k-1$ (with $k > n_1$),  are smoothly defined such that $\eta_1(p_1) |_i = x_i$, and moreover, 
\begin{equation*}
\|\eta_1(p'_1) |_i - \eta_1(p'_1)|_j\| = \|x_i - x_j\| 
\end{equation*}
for all $(i, j) \in  E(k-1) - E(n_1)$. 
Suppose that in the sequence $\G$, the vertex~$k$ links to vertices~$i$ and~$j$. 
We now show that $\eta_1(p'_1)|_k$ can be smoothly defined such that 
\begin{equation}\label{eq:checkingcondition1}
\eta_1(p_1)|_k = x_k,
\end{equation}
and moreover, 
\begin{equation}\label{eq:checkingcondition2}
\left\{
\begin{array}{l} 
\| \eta_1(p'_1)|_{k} - \eta_1(p'_1) |_{i} \| = \| x_k - x_i \|, \vspace{3pt}\\
\| \eta_1(p'_1)|_{k} - \eta_1(p'_1) |_{j} \| = \| x_k - x_j \|.
\end{array}
\right.
\end{equation}
There are two cases to consider:

\textit{Case I}. Suppose that $ x_i$, $ x_j$, and $ x_k$ are not aligned in $p$; then, the three agents form a nondegenerate triangle. From Lemma~\ref{SIMPLE}, there are open neighborhoods $U_i$, $U_j$, and $U_k$ of $x_i$, $x_j$, and $x_k$ in $\R^2$, and a unique smooth map  
$
\xi: U_i \times U_j \longrightarrow U_k
$ 
such that $\xi(x_i, x_j) = x_k$, and moreover, for any $(x'_i, x'_j)$ in $U_i\times U_j$, we have 
\begin{equation*}\label{eq:xiij}
\left\{
\begin{array}{l}
\|\xi(x'_i,x'_j) - x'_i \|  = \| x_k - x_i \|, \vspace{3pt}\\
\|\xi(x'_i,x'_j) - x'_j \|  = \| x_k - x_j \|.
\end{array}
\right.
\end{equation*}
From the induction hypothesis, both $\eta_{1}(p'_1) |_i$ and $\eta_{1}(p'_1) |_j$ are well defined, with 
$\eta_1(p_1) |_{i} = x_i$  and  $\eta_1(p_1) |_{j} = x_j$.  
Moreover, by shrinking $\cal{U}_1$ if necessary, we have that $\eta_{1}(p'_1) |_i \in U_i$ and  $\eta_{1}(p'_1) |_j \in U_j$ for all $p'_1 \in \cal{U}_1$. 
We can thus define $$\eta_1(p'_1)|_{k} := \xi\(\eta_{1}(p'_1) |_i,\,  \eta_{1}(p'_1) |_j\).$$
By combining Lemma~\ref{SIMPLE} with the induction hypothesis, we have that $\eta_1$ is smooth in~$p'_1\in \cal{U}_1$, and satisfies~\eqref{eq:checkingcondition1} and~\eqref{eq:checkingcondition2}. 


\textit{Case II}. Suppose that $ x_i$, $ x_j$, and $ x_k$ are aligned; we define $\eta_1(p'_1)|_k$ as follows: first, let $c_i$ and $c_j$ be two scalars defined as follows:
$$
c_i := \frac{\langle x_j - x_k, x_j - x_i \rangle}{\|x_j - x_i \|^2} \hspace{5pt} \mbox{ and } \hspace{5pt} c_j := \frac{\langle x_i - x_k, x_i - x_j \rangle}{\|x_i - x_j \|^2},
$$
where $\langle \cdot, \cdot \rangle$ is the standard inner-product in $\R^n$; 
we then set 
\begin{equation}\label{ALIGND}
\eta_1(p'_1)|_k:= c_i\, \eta_1(p'_1)|_i + c_j \, \eta_1(p'_1)|_j. 
\end{equation}  
From the induction hypothesis, we have that $\eta_1(p'_1) |_k$ is smooth in $p'_1\in \cal{U}_1$, and moreover, satisfies~\eqref{eq:checkingcondition1} by the choices of~$c_i$ and~$c_j$. Also, note that $\eta_1(p'_1)|_i $, $\eta_1(p'_1)|_j$, and $\eta_1(p'_1)|_k$ are aligned; indeed, by computation,  we have
\begin{equation}\label{eq:alignedetaijk}
\left\{
\begin{array}{l}
\eta_1(p'_1)|_k - \eta_1(p'_1)|_i = c_j \(\eta_1(p'_1)|_j - \eta_1(p'_1)|_i \),\vspace{3pt}\\
\eta_1(p'_1)|_k - \eta_1(p'_1)|_j = c_i \(\eta_1(p'_1)|_i - \eta_1(p'_1)|_j \).
\end{array}
\right. 
\end{equation}
Appealing again to the induction hypothesis, we have
$$
 \| \eta_1(p'_1)|_j - \eta_1(p'_1)|_i \| = \|x_j - x_i \|. 
$$ 
Combining this fact with~\eqref{eq:alignedetaijk}, we conclude that~\eqref{eq:checkingcondition2} holds.  
\qquad\endproof

\section{Evaluating the signatures of the Hessians}\label{ssec:morsebott}

In this section, we use independent partitions to evaluate the Hessians of the potential function $\Phi$ at the critical points. We also prove Theorem~\ref{MAIN}.  

\subsection{Independent partitions for critical points}\label{OnEC}
Recall that the  dynamics of agents $x_i$, for $i\in V$,  are given by
\begin{equation}\label{eq:relaxeddynamics}
\dot{x}_i = \sum_{j \in \cal{N}_i} u_{ij}(d_{ij},\ol d_{ij}) (x_j - x_i),  \hspace{10pt} \forall i\in V, 
\end{equation}
and the equilibria of this system are the critical points of the potential $\Phi$ defined by~\eqref{PHI}.
We establish the following fact:

\begin{pro}\label{pro:equilibriumcondition}
Let $G$ be a triangulated Laman graph, and $p\in P_G$ be an equilibrium of system~\eqref{eq:relaxeddynamics}. Let $\{(G_i, p_i)\}^m_{i=1}$, with $G_i = (V_i, E_i)$,  be the frameworks associated with the independent partition for $(G, p)$. Then, each $p_i$ is an equilibrium of the subsystem of~\eqref{eq:relaxeddynamics} induced by $G_i$.
\end{pro}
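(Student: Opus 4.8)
The plan is to exploit the gradient structure of the dynamics together with the perturbation map $\eta_i$ furnished by the third part of Theorem~\ref{indpar}. Write $\Phi_i : P_{G_i}\to\R$ for the potential function induced by the subgraph $G_i$, i.e.\ the expression~\eqref{PHI} with the sum restricted to $E_i$. Since the subsystem of~\eqref{eq:relaxeddynamics} induced by $G_i$ is itself a gradient system for $\Phi_i$, its equilibria are precisely the critical points of $\Phi_i$. Thus it suffices to prove that $\nabla\Phi_i(p_i)=0$.

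The key observation I would establish is that the composition $\Phi\circ\eta_i:\cal{U}_i\to\R$ agrees with $\Phi_i$ up to an additive constant. Indeed, since the independent partition is a genuine partition of $E$, every edge of $G$ lies in exactly one block $E_j$. For $p'_i\in\cal{U}_i$ (shrunk if necessary so that $p'_i\in P_{G_i}$) and an edge $(j,k)\in E_i$, Theorem~\ref{indpar} gives $\eta_i(p'_i)|_{V_i}=p'_i$, so these terms of $\Phi(\eta_i(p'_i))$ reproduce exactly $\Phi_i(p'_i)$. For an edge $(j,k)$ lying in a block $E_l$ with $l\neq i$, Theorem~\ref{indpar} gives $\eta_i(p'_i)|_{V_l}\in O_{p_l}$; because the $\SE(2)$-action preserves all pairwise distances within $V_l$, the length $\|\eta_i(p'_i)|_j-\eta_i(p'_i)|_k\|=\|x_j-x_k\|$ is independent of $p'_i$, so these terms contribute a constant. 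Hence $\Phi\circ\eta_i=\Phi_i+c$ on $\cal{U}_i$.

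The conclusion then follows from the chain rule. Since $p$ is an equilibrium of~\eqref{eq:relaxeddynamics}, it is a critical point of $\Phi$, so $\nabla\Phi(p)=0$; moreover $\eta_i(p_i)=p$. Therefore $D_{p_i}(\Phi\circ\eta_i)=D_p\Phi\circ D_{p_i}\eta_i=0$. Combining this with the decomposition $\Phi\circ\eta_i=\Phi_i+c$ yields $\nabla\Phi_i(p_i)=D_{p_i}(\Phi\circ\eta_i)=0$, so $p_i$ is a critical point of $\Phi_i$ and hence an equilibrium of the subsystem induced by $G_i$, as claimed.

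The genuinely substantive content sits in Theorem~\ref{indpar}, which supplies the map $\eta_i$ together with the orbit-invariance property $\eta_i(p'_i)|_{V_l}\in O_{p_l}$; once that is in hand the present argument is essentially a chain-rule computation. The main point requiring care is the clean splitting of the sum in~\eqref{PHI} into the $E_i$-block, which varies with $p'_i$, and the remaining blocks, which stay frozen. This relies on two facts: that the partition is a true partition of $E$, so no edge is double-counted across blocks, and that membership in $O_{p_l}$ preserves not merely the edge lengths but all distances within each $V_l$, guaranteeing the frozen terms are truly constant.
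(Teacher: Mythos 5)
Your proof is correct, but it follows a genuinely different route from the paper's. The paper proves Proposition~\ref{pro:equilibriumcondition} by induction along a Henneberg sequence: for the newly added vertex $k$ adjacent to $i$ and $j$, either $x_i,x_j,x_k$ are not aligned (whence linear independence forces $f_{ik}(d_{ik})=f_{jk}(d_{jk})=0$ and the inductive hypothesis applies to the subgraph on the remaining vertices), or they are aligned, in which case the paper invokes its \emph{system reduction} construction (Lemma~\ref{lem:systemreduction}) to delete $x_k$, absorb its interaction into a modified law $f_{ij}+g_{ij}$, and pass to the reduced system before applying the inductive hypothesis. You instead bypass the induction and the reduction machinery entirely: you use the gradient structure, the maps $\eta_i$ from the last part of Theorem~\ref{indpar}, and the identity $\Phi\circ\eta_i=\Phi_i+c$ (which holds precisely because the blocks $E_l$, $l\neq i$, are frozen on $\SE(2)$-orbits under $\eta_i$), so that the chain rule at the critical point $p$ immediately gives $\nabla\Phi_i(p_i)=0$. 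Your argument is shorter and conceptually cleaner, and it is really the first-order analogue of what the paper does at second order in Proposition~\ref{pro:diagonalizationofH}, where $D_{p_i}\eta_i$ is used to extract the block structure of the Hessian; it also retains the paper's stated generality, since the gradient structure needs only $\operatorname{C}^1$ control laws. What the paper's heavier route buys is the reduction lemma itself, which is not disposable: it is reused crucially in the proof of Proposition~\ref{pro:signatureofFp} to propagate instability of line configurations through the induction, so the paper's proof of Proposition~\ref{pro:equilibriumcondition} doubles as a warm-up for machinery needed later, whereas your argument, while self-contained given Theorem~\ref{indpar}, would not eliminate the need for that lemma elsewhere.
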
     

For Proposition~\ref{pro:equilibriumcondition}, we can actually relax the condition that each control law is a monotone att./rep. function, but require  only that  $u_{ij}(\cdot, \ol d_{ij})$ be a continuously differentiable function. We establish below Proposition~\ref{pro:equilibriumcondition} in this general context. 

\paragraph{System reduction} We introduce below an operation on system~\eqref{eq:relaxeddynamics} which helps to derive a new system of fewer agents. 
We have the following fact: when several agents are aligned, we can remove some of these agents from the formation and modify the interactions between the remaining agents. We call this construction, which we make precise below, \emph{system reduction}. We start with  some definitions and notation. First, for ease of notation, we let 
$$f_{ij}(\cdot):= u_{ij}(\cdot, \ol d_{ij}), \hspace{10pt} \forall (i,j)\in E.$$ 
Let $(G, p)$ be a framework, with $G$ a triangulated Laman graph. 
Choose a vertex~$k$ of $G$ of degree~$2$, and assume that vertex~$k$ is adjacent to vertices~$i$ and~$j$.  
Now, suppose that  $x_i$, $x_j$, and $x_k$ are aligned, and moreover, that the dynamics of $x_k$ (in system~\eqref{eq:relaxeddynamics}) is zero at~$p$, i.e., 
\begin{equation}\label{eq: defeqg1}
\dot x_k = f_{ik}(d_{ik})(x_i - x_k) + f_{jk}(d_{jk})(x_j - x_k) = 0.  
\end{equation}
Introduce a function $g_{ij}\in \operatorname{C}^1(\mathbb{R}_+,  \mathbb{R})$  such that the value of $g_{ij}$ at $d_{ij}$ satisfies
\begin{equation}\label{eq:defgproperty1}
g_{ij}(d_{ij})( x_j -  x_i)  =  f_{ik}(d_{ik}) ( x_k -  x_i),  
\end{equation} 
but is arbitrary otherwise. 
Note that such a function exists because the vectors $(x_j - x_i)$ and $(x_k - x_i)$ are nonzero and moreover linearly dependent. 
Also, note that 
from~\eqref{eq: defeqg1} and~\eqref{eq:defgproperty1}, we have 
\begin{equation}\label{eq:defgproperty2}
g_{ij}(d_{ij}) ( x_j -  x_i)= f_{jk}(d_{jk}) ( x_j -  x_k).
\end{equation}
Let $G^* = (V^*, E^*)$ be the subgraph induced by $V^* := V - \{k\}$, and $(G^*,p^*)$ be the corresponding framework.  
Let $\cal{R}$ be a formation control system induced by $G^*$, with the control laws denoted by $ f^*_{i'j'}$, for $(i',j') \in E^*$. We say that $\cal{R}$ is a {\bf reduction of~\eqref{eq:relaxeddynamics} for $(G,p)$} if  the control laws $f^*_{i'j'}$ are such that 
\begin{equation}\label{eq:defwidetildefij}
f^*_{i'j'} := 
\left\{
\begin{array}{ll}
f_{ij} + g_{ij} & \mbox{if } (i',j') = (i,j) \\
f_{i'j'} &  \mbox{if } (i',j') \in E^* - \{(i,j)\}, 
\end{array}
\right. 
\end{equation}
with $g_{ij}\in \operatorname{C}^1(\R_+,\R)$ defined above. 

The main property of the reduced system is the following:


\begin{lem}\label{lem:systemreduction}
Suppose that $\cal{R}$ is a reduction of system~\eqref{eq:relaxeddynamics} for $(G,p)$;  then, $p^*$ is an equilibrium of $\cal{R}$ \emph{if and only if} $p$ is an equilibrium of system~\eqref{eq:relaxeddynamics}.   
\end{lem}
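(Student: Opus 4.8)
The plan is to reduce the lemma to a single pointwise identity between two vector fields. Write $\dot{x}_v$ for the velocity of agent~$v$ under the original system~\eqref{eq:relaxeddynamics} and $\dot{x}^*_v$ for its velocity under the reduced system~$\cal{R}$, both evaluated at the positions prescribed by $p$ (these coincide with those of $p^*$ on $V^* = V - \{k\}$, since $p^*$ is the induced sub-configuration). I claim it suffices to prove
\begin{equation*}
\dot{x}^*_v = \dot{x}_v \quad \text{for every } v \in V^*.
\end{equation*}
Indeed, vertex~$k$ is assumed to satisfy $\dot{x}_k = 0$ at $p$ (this is precisely~\eqref{eq: defeqg1}, part of the hypothesis under which $\cal{R}$ is defined), so $p$ is an equilibrium of~\eqref{eq:relaxeddynamics} if and only if $\dot{x}_v = 0$ for all $v \in V^*$. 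By the claimed identity this is equivalent to $\dot{x}^*_v = 0$ for all $v \in V^*$, which is exactly the statement that $p^*$ is an equilibrium of $\cal{R}$. Thus both the forward and the converse implications follow at once, and the proof is complete once the identity is established.

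To establish the identity I would argue vertex by vertex. For a vertex $v \in V^* - \{i,j\}$, the neighborhood $\cal{N}_v$ is unaffected by the removal of~$k$ (because $k$, having degree~$2$, is adjacent only to $i$ and $j$), and every control law $f_{vw}$ with $w \in \cal{N}_v$ is left unchanged by~\eqref{eq:defwidetildefij}; hence $\dot{x}^*_v = \dot{x}_v$ trivially. For $v = i$, comparing the two expansions shows that passing to $\cal{R}$ deletes the term $f_{ik}(d_{ik})(x_k - x_i)$ contributed by the removed edge $(i,k)$ and inserts the term $g_{ij}(d_{ij})(x_j - x_i)$ contributed by the modified law $f^*_{ij} = f_{ij} + g_{ij}$ on edge $(i,j)$; by the defining property~\eqref{eq:defgproperty1} of $g_{ij}$ these two terms are equal, so their net contribution cancels and $\dot{x}^*_i = \dot{x}_i$. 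For $v = j$ the same computation yields the difference $g_{ij}(d_{ij})(x_i - x_j) - f_{jk}(d_{jk})(x_k - x_j)$, which vanishes by~\eqref{eq:defgproperty2}; hence $\dot{x}^*_j = \dot{x}_j$.

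The substance of the argument is contained entirely in the two cancellations at $v = i$ and $v = j$, and the only reason they succeed is the alignment hypothesis: because $x_i$, $x_j$, $x_k$ are collinear, both deleted contributions $f_{ik}(d_{ik})(x_k - x_i)$ and $f_{jk}(d_{jk})(x_k - x_j)$ are parallel to $x_j - x_i$, which is precisely what allows a single scalar gain $g_{ij}$ on the edge $(i,j)$ to reproduce them simultaneously. I do not anticipate a genuine obstacle; the main point requiring care is the bookkeeping of which terms appear in $\dot{x}_i$ and $\dot{x}_j$ and applying~\eqref{eq:defgproperty1} and~\eqref{eq:defgproperty2} with the correct signs, together with the observation that equilibrium is a pointwise condition, so that the control laws need only be evaluated at the fixed distances $d_{ij}$, $d_{ik}$, $d_{jk}$ determined by $p$.
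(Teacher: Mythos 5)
Your proof is correct and takes essentially the same route as the paper's: both reduce the lemma to checking that the two vector fields agree at every vertex of $V^*$, with the only nontrivial cancellations occurring at vertices $i$ and $j$ via~\eqref{eq:defgproperty1} and~\eqref{eq:defgproperty2}, respectively. Your write-up merely makes explicit two points the paper leaves implicit --- that $\dot x_k = 0$ at $p$ is part of the hypothesis under which $\cal{R}$ is defined, and that vertices outside $\{i,j,k\}$ have untouched dynamics --- which is a tightening of the exposition, not a different argument.
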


\begin{proof}
It suffices to show that the dynamics of $x_i$ and $x_j$ in system~\eqref{eq:relaxeddynamics} at $p$ are the same as they are in~$\cal{R}$ at $p^*$. The dynamics of $x_i$ in~$\cal{R}$ at $p^*$ is given by
\begin{equation}\label{eq:dynamicsofxiinsstar}
\dot x_i = f^*_{ij}(d_{ij})(x_j - x_i) + \sum_{j'\in \cal{N}^*_i \backslash \{j\}} f_{ij'}(d_{ij'})(x_{j'} - x_i),
\end{equation}
where $\cal{N}^*_i$ is the set of neighbors of vertex~$i$ in $G^*$.  
Combining~\eqref{eq:defgproperty1} and~\eqref{eq:defwidetildefij}, we have 
$$
f^*_{ij}(d_{ij})(x_j - x_i)  = f_{ij}(d_{ij})(x_j - x_i) + f_{ik}(d_{ik})(x_k - x_i),
$$
and hence~\eqref{eq:dynamicsofxiinsstar} is reduced to
$$
\dot x_i = \sum_{j'\in \cal{N}_i} f_{ij'}(d_{ij'})(x_{j'} - x_i),
$$
which is exactly the dynamics of $x_i$ in system~\eqref{eq:relaxeddynamics} at $p$. 
For $x_j$, we apply the same arguments as above (using~\eqref{eq:defgproperty2} instead of~\eqref{eq:defgproperty1}), and conclude that the dynamics of $x_j$ in~\eqref{eq:relaxeddynamics} at $p$ is the same as it is in $\cal{R}$ at $p^*$. This completes the proof. 
\qquad\end{proof}

With Lemma~\ref{lem:systemreduction} at hand,  we now prove Proposition~\ref{pro:equilibriumcondition}:

{\em Proof of Proposition~\ref{pro:equilibriumcondition}}. 
The proof is  by induction on the number of vertices of the graph. For the base case $n = 2$, the proposition is trivially true.  For the inductive step, we assume that the proposition holds for $n <k$ and prove it for $n =k$. Choose a Henneberg sequence $\G$ of $G$, and label vertices of $G$ with respect to the order in which they appear in the sequence. We assume that vertex~$k$ links to vertices~$i$ and~$j$. 

Let $G^*$ be the subgraph induced by vertices $\{1,\cdots,k-1\}$, and $\(G^*, p^*\)$ be the corresponding framework. 
There are two cases to consider: 

\textit{Case I}. Suppose that $ x_i$, $ x_j$, and $ x_k$ are not aligned; then, from the definition of independent partition,  there are two singletons $\{(i,k)\}$ and $\{(j,k)\}$ in the subsets $\{E_l\}^m_{l=1}$. Without loss of generality, we assume that $E_1 = \{(i,k)\}$ and $E_2 = \{(j,k)\}$. Note that $\{E_l\}^m_{l=3}$ is then the independent partition for the sub-framework $(G^*,p^*)$.  Let $\cal{S}_l$ be the subsystem induced by $G_l$, for $l = 1,\ldots, m$; we show that each $p_l$ is an equilibrium of $S_l$. 
Since $p$ is an equilibrium of system~\eqref{eq:relaxeddynamics}, the dynamics of $x_k$ is zero at~$p$, i.e.,
\begin{equation}\label{eq:xkdotzero}
\dot{ x}_k=f_{ik}(d_{ik})( x_i- x_k) +f_{jk}(d_{jk})( x_j- x_k) = 0. 
\end{equation} 
Combining~\eqref{eq:xkdotzero} with the fact that the vectors $( x_i- x_k)$ and $( x_j- x_k)$ are linearly independent, we obtain 
\begin{equation}\label{eq:finfjn0}
f_{ik}(d_{ik}) = f_{jk}(d_{jk}) = 0,
\end{equation} 
which implies that $p_{1}$ and $p_{2}$ are equilibria of the subsystems induced by  $G_{1}$ and of $G_{2}$, respectively.  
Furthermore, following~\eqref{eq:finfjn0}, we have that $p^*$ is an equilibrium of the subsystem induced by $G^*$. Appealing to the induction hypothesis, we have that each $p_l$  is an equilibrium of $\cal{S}_l$ for all $l = 3,\ldots, m$. We have thus established the proposition for the first case.

\textit{Case II}. Suppose  that $ x_i$, $ x_j$, and $ x_k$ are aligned. Then,  there is a subset, say  $E_1$, containing $(i,j)$, $(i,k)$ and $(j,k)$. Let $E^*_l$, for $l = 1,\ldots, m$, be defined such that 
$$
E^*_l = 
\left\{
\begin{array}{ll}
E_1 - (i,k) - (j,k) & \mbox{if } l = 1,\\
E_l & \mbox{otherwise}. 
\end{array}
\right.
$$ 
Note that $E^*_1$ is nonempty because it contains $(i,j)$. We also note that $\{E^*_i\}^m_{i=1}$ is the independent partition for $(G^*,p^*)$. 
Let $\{(G^*_i,p^*_i)\}^m_{i=1}$ be the frameworks associated with that independent partition. It should be clear that $(G^*_1,p^*_1)$ is a sub-framework of $(G_1,p_1)$, and $(G^*_l,p^*_l) = (G_l,p_l)$ for all $l > 1$.   
Because $x_i$, $x_j$, and $x_k$ are aligned and 
the dynamics of $x_k$ is zero at $p$,   
we can define a formation control system $\cal{R}$ which is induced by $G^*$, and is a reduction of system~\eqref{eq:relaxeddynamics} for $(G, p)$ (as in~\eqref{eq:defwidetildefij}). Since $p$ is an equilibrium of system~\eqref{eq:relaxeddynamics}, from Lemma~\ref{lem:systemreduction}, $p^*$ is an equilibrium of $\cal{R}$.   Let $\cal{R}_l$ be the formation control subsystem of $\cal{R}$ induced by $G^*_l$, for  $l = 1,\ldots,m$.  From the induction hypothesis, each $p^*_l$ is an equilibrium of $\cal{R}_l$ for all $l = 1,\ldots,m$.  Note that for $l >1$, $\cal{S}_l = \cal{R}_l$, and hence $p_l$ is an equilibrium of $\cal{S}_l$; for $i = 1$, we have that $\cal{R}_{1}$ is a reduction of $\cal{S}_1$ for $(G_1, p_1)$. Appealing again to Lemma~\ref{lem:systemreduction}, we conclude that $p_1$ is an equilibrium of $\cal{S}_1$. This completes the proof. 
\qquad\endproof


\subsection{Independent partitions for evaluating the signatures  of Hessians}
We evaluate in this subsection the signatures of the Hessians of the potential function~$\Phi$. First, recall that the signature of a real symmetric matrix is 
defined as follows:
\begin{Definition}[Signatures of symmetric matrices]\label{def:signature}
Let $H$ be a real symmetric matrix. 
Let $N_+(H)$, $N_-(H)$, and $N_0(H)$ be the numbers of positive, negative, and zero eigenvalues of $H$, respectively. 
We call the triplet  
$$
 N(H) := \(N_+(H), N_-(H), N_0(H)\)
$$  
the  {\bf signature} of  $H$. 
\end{Definition}

We  provide here a formula for computing the signature of the Hessian matrix of the potential function of the formation control system. Note that from Lemma~\ref{LEQUIV}, the signature of $H_{p'}$ is invariant as long as $p' \in O_p$. 
Also, note that in terms of the signature,  a critical orbit  $O_p$ (which is of dimension~$3$)  
is {exponentially stable} if and only if 
$
 N(H_p) = (2n -3,0,  3)
$. 

Let $(G, p)$ be a framework, with $G$ a triangulated Laman graph and $p\in P_G$. Let $\left \{ \(G_i,p_i\) \right \}^m_{i=1}$, with $G_i = (V_i, E_i)$,  be the frameworks  associated with the independent partition for $(G,p)$. Let $\cal{S}_i$ be the formation subsystem induced by the subgraph $G_i$. We recall that each $\cal{S}_i$ is a gradient system. Specifically, let $V_i = \{i_1,\ldots, i_{|V_i|}\}$. Then, the associated potential function $\Phi_i$ is given by
\begin{equation}\label{eq:inducedpotential}
\Phi_i(x_{i_1},\ldots, x_{i_{|V_i|}}) = \sum_{(i_j,i_k)\in E_i}  \,   \int^{\|x_{i_j} - x_{i_k}\|}_{1} sf_{i_j i_k}(s) ds. 
\end{equation}
Let $H_{p_i}$ be the Hessian of $\Phi_i$ at $p_i$, and  
$
N(H_{p_i})$ be the signature of $H_{p_i}$. 
With the definitions and notations above, we state the following result:

\begin{theorem}\label{MBIF}
Let $(G,p)$ be a framework, with $G$ a triangulated Laman graph and $p\in P_G$. 
Let $\{(G_i,p_i)\}^m_{i=1}$  be the frameworks associated with the independent partition for $(G,p)$. Let $H_p$ be the Hessian of $\Phi$ at $p$, and $H_{p_i}$, for $i = 1,\ldots, m$, be the Hessian of $\Phi_i$ at $p_i$. Then, 
\begin{equation}\label{INDEXF}
\left\{
\begin{array}{l}
N_+(H_p)=\sum^m_{i=1}N_+(H_{p_i}), \vspace{3pt}\\
N_-(H_p)=\sum^m_{i=1}N_-(H_{p_i}).
\end{array}
\right.
\end{equation}\, 
\end{theorem}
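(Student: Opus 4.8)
The plan is to argue by induction on the number $n$ of vertices, peeling off the last vertex $k$ of a Henneberg sequence of $G$; recall that $k$ has degree $2$ and is adjacent to, say, vertices $i$ and $j$, and let $G^\ast$ be the subgraph on $\{1,\dots,k-1\}$ with framework $(G^\ast,p^\ast)$. Since only $N_+$ and $N_-$ are constrained, and these are exactly what governs stability, I would carry out the computation at an equilibrium $p$ of~\eqref{eq:relaxeddynamics}, so that the edgewise criticality relations supplied by Proposition~\ref{pro:equilibriumcondition} are available. The first fact I would record is the shape of each edge Hessian at such a point: writing $\hat u_{ab} := (x_a-x_b)/d_{ab}$, the Hessian of the single-edge potential $\int_1^{\|x_a-x_b\|} s f_{ab}(s)\,ds$ is the block matrix $\left(\begin{smallmatrix} A_{ab} & -A_{ab}\\ -A_{ab} & A_{ab}\end{smallmatrix}\right)$ with $A_{ab}=c_{ab}\,\hat u_{ab}\hat u_{ab}^\top$ and $c_{ab}=d_{ab}f_{ab}'(d_{ab})$, which is \emph{strictly positive} by condition~C1 because $f_{ab}(d_{ab})=0$ at the equilibrium. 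In particular each single-edge block has signature $(1,0,3)$.

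In \textbf{Case I} (the vertices $x_i,x_j,x_k$ are not aligned), the two new edges $(i,k),(j,k)$ are singleton blocks $E_1,E_2$, and $\{E_l\}_{l\ge 3}$ is the independent partition of $(G^\ast,p^\ast)$. Because $\hat u_{ik},\hat u_{jk}$ are linearly independent, the self-block $D:=A_{ik}+A_{jk}$ of $x_k$ is positive definite, so I would eliminate $x_k$ by a Schur complement, giving $N_\pm(H_p)=N_\pm(D)+N_\pm(S)$ with $S$ the Schur complement on the remaining coordinates. A short computation using $A_{ab}D^{-1}A_{ab}=A_{ab}$ and $\hat u_{ik}^\top D^{-1}\hat u_{jk}=0$ (both immediate from writing $D=U\,\operatorname{diag}(c_{ik},c_{jk})\,U^\top$ with $U=[\hat u_{ik}\mid\hat u_{jk}]$) shows that all correction terms cancel and $S=H_{p^\ast}$ exactly. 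Since $N(D)=(2,0,0)$ while $H_{p_1},H_{p_2}$ each have signature $(1,0,3)$, and $N_\pm(H_{p^\ast})=\sum_{l\ge 3}N_\pm(H_{p_l})$ by induction, the formula follows in this case.

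\textbf{Case II} (the vertices are aligned) is the main obstacle. Now $\hat u_{ik},\hat u_{jk}$ are parallel to the common line direction $u$, so $D=(c_{ik}+c_{jk})uu^\top$ has rank one and is not invertible. Decomposing $\delta x_k=s\,u+t\,u^\perp$, the variable $\delta x_k$ enters $H_p$ only through the triangle terms $c_{ik}(u^\top\delta x_i-s)^2+c_{jk}(u^\top\delta x_j-s)^2$, so the perpendicular component $t$ is a genuine null direction (contributing $(0,0)$ to $N_\pm$), while completing the square in $s$ contributes $(1,0)$ and leaves a residual term $c_g\,(u^\top(\delta x_i-\delta x_j))^2$ with $c_g=c_{ik}c_{jk}/(c_{ik}+c_{jk})>0$. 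I would then recognize this residual quadratic form as the Hessian at $p^\ast$ of a \emph{reduced} potential $\Phi^{\mathcal R}$ in the sense of Lemma~\ref{lem:systemreduction}, obtained by deleting $x_k$ and replacing the law on edge $(i,j)$ by $f_{ij}+g_{ij}$, where $g_{ij}$ is chosen with $g_{ij}(d_{ij})=0$ (consistent with the reduction constraint, as $f_{ik}(d_{ik})=0$) and $d_{ij}g_{ij}'(d_{ij})=c_g$. Applying the induction hypothesis to $(G^\ast,\Phi^{\mathcal R})$, whose independent partition is $\{E_1^\ast,E_2,\dots,E_m\}$, gives $N_\pm(\tilde H)=N_\pm(H^{\mathcal R}_{p_1^\ast})+\sum_{l\ge 2}N_\pm(H_{p_l})$, where $\tilde H$ denotes the residual form.

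To close the induction I would reconcile $H^{\mathcal R}_{p_1^\ast}$ with $H_{p_1}$. The sub-framework $(G_1,p_1)$ is itself a line framework in which $k$ has degree $2$, so running the \emph{same} Case II peeling inside $G_1$—crucially with the \emph{same} constant $c_g$, since that computation is local to the triangle $(i,j,k)$—yields $N_+(H_{p_1})=1+N_+(H^{\mathcal R}_{p_1^\ast})$ and $N_-(H_{p_1})=N_-(H^{\mathcal R}_{p_1^\ast})$. Substituting back, the extra $+1$ from the $s$-direction of the ambient system is absorbed exactly into $N_+(H_{p_1})$, and both identities in~\eqref{INDEXF} follow. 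The delicate point throughout is precisely this consistency: eliminating $x_k$ produces a residual coupling $c_g$ that must be \emph{independent of the arbitrary freedom} in $g_{ij}$ away from $d_{ij}$, which is exactly what allows the same $g_{ij}$ to serve both the ambient reduction and the reduction internal to $G_1$; verifying that $c_g$ is intrinsic is where I expect the real work to lie.
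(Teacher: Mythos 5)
Your Case~I is essentially sound: at an equilibrium the two new edges $(i,k)$, $(j,k)$ are singleton blocks of the independent partition, Proposition~\ref{pro:equilibriumcondition} does give $f_{ik}(d_{ik})=f_{jk}(d_{jk})=0$ there, and your Schur-complement identities ($A_{ab}D^{-1}A_{ab}=A_{ab}$, $\hat u_{ik}^\top D^{-1}\hat u_{jk}=0$, hence $S=H_{p^*}$) check out. But the proof breaks at its declared foundation: the claim that \emph{every} edge tension vanishes at an equilibrium, i.e.\ that $f_{ab}(d_{ab})=0$ for all $(a,b)\in E$, so that every edge Hessian block is the rank-one form $d_{ab}f'_{ab}(d_{ab})\,\hat u_{ab}\hat u_{ab}^\top$. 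An equilibrium only forces the \emph{net} force at each agent to vanish, and Proposition~\ref{pro:equilibriumcondition} only transfers this to each sub-framework; edgewise vanishing follows solely for singleton blocks. Inside a line block with three or more vertices it is in fact \emph{impossible} for all tensions to vanish: that would give $d_{ab}=\ol d_{ab}$ on the three edges of an aligned $3$-cycle, contradicting the strict triangle inequalities. The paper's own proof of Proposition~\ref{pro:signatureofFp} makes this quantitative: at a collinear critical triangle, $f_{12}(d_{12})<0$ and $f_{13}(d_{13})<0$. The correct block is $A_{ab}=f_{ab}(d_{ab})\,I_2+d_{ab}f'_{ab}(d_{ab})\,\hat u_{ab}\hat u_{ab}^\top$ (with $I_2$ the $2\times 2$ identity), and the isotropic part is nonzero exactly in the situation of your Case~II.

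This defeats Case~II. With $f_{ik}(d_{ik}),f_{jk}(d_{jk})\neq 0$, the perpendicular component $t$ of $\delta x_k$ enters the quadratic form through $f_{ik}(d_{ik})(t_i-t)^2+f_{jk}(d_{jk})(t_j-t)^2$ (where $t_i,t_j$ are the perpendicular components of $\delta x_i,\delta x_j$) and is \emph{not} a null direction; these perpendicular couplings are precisely what assemble into the matrix $F_p$ of~\eqref{GMAT'} in the block decomposition~\eqref{HESS}, and by Proposition~\ref{pro:signatureofFp} that matrix carries negative eigenvalues---so the directions you discard as null are the very source of the negative inertia that the theorem must account for. Your choice $g_{ij}(d_{ij})=0$ is likewise inconsistent with the reduction constraint~\eqref{eq:defgproperty1}, which forces $g_{ij}(d_{ij})(x_j-x_i)=f_{ik}(d_{ik})(x_k-x_i)\neq 0$ here; consequently the bookkeeping ``$(0,0)$ from $t$, $(1,0)$ from $s$, residual $c_g(\cdot)^2$,'' and the final reconciliation of $H^{\mathcal{R}}_{p^*_1}$ with $H_{p_1}$ built on it, both collapse. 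There is also a scope issue: Theorem~\ref{MBIF} is asserted for \emph{every} $p\in P_G$ (the paper's remark stresses that neither criticality nor monotonicity of the laws is needed), whereas your argument is confined to equilibria from the first line, so at best it would prove a weaker statement. The paper avoids all of these difficulties by a non-inductive route: using the partition-adapted perturbation maps $\eta_i$ of Theorem~\ref{indpar} and Corollary~\ref{cor:derivativeofeta}, it constructs an explicit invertible matrix $W$ with $W^\top H_p W=\Lambda$ block-diagonal (Proposition~\ref{pro:diagonalizationofH}) and concludes by Sylvester's law of inertia; the cross-block interaction is absorbed into tangent directions of the orbits $O_{p_j}$ rather than cancelled by Schur complements, and no equilibrium condition is ever invoked.
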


\begin{Remark}\normalfont
We note here that the configuration $p$ in Theorem~\ref{MBIF} does {\it not} need to be an equilibrium of system~\eqref{MODEL}. The formula~\eqref{INDEXF} holds for all $p\in P_G$. Furthermore, the control laws $u_{ij}$'s that define the potential $\Phi$ do not need to be monotone att./rep. functions, nor the potential $\Phi$ needs to be an equivariant Morse function. Indeed, Theorem~\ref{MBIF} holds as long as $G$ is a triangulated Laman graph and the control laws are continuously differentiable.   $\qed$
\end{Remark}

Note that $\{E_i\}^m_{i=1}$ is a partition of $E$, and hence from~\eqref{eq:inducedpotential}, we have $\Phi(p) = \sum_{i=1}^m \Phi_i(p_i)$. The formula~\eqref{INDEXF} would hold trivially if each $ \Phi_i$ were {\it independent} of $\Phi_{j}$ for $j\neq i$, i.e., the sets of variables of $\Phi_i$'s are mutually distinct. For example, if $\Phi(x_1,\ldots, x _n) = \sum^N_{i=1}\Phi_i(x_i) $, then the resulting gradient system is fully decoupled, and hence Theorem~\ref{MBIF} follows immediately from the fact that the Hessian of $\Phi$ is block-diagonal. However, such a decoupling argument does \emph{not} apply here. Indeed,  consider the following example: let $G$ be a complete graph of three vertices and $p\in P_G$ be a non-degenerate triangle. The independent partition for $(G,p)$ is then given by
$$\left\{E_1=\{(2,3)\}, E_2=\{(1,3)\}, E_3=\{(1,2)\}\right\}.$$ 
The corresponding potential functions $\Phi_i$, for $i = 1, 2, 3$,  are given by  $$\Phi_i(x_j,x_k)= \int_1^{\|x_j-x_k\|} s f_{jk}(s) ds$$ for $i,j,k$ distinct integers in $\{1,2,3\}$. These functions are clearly not independent of others, and hence the Hessian of $\Phi$ is not block-diagonal. Nevertheless, Theorem~\ref{MBIF} shows that formula~\eqref{INDEXF} still holds.

\paragraph{Analysis and proof of Theorem~\ref{MBIF}}  
We first have some definitions and notations.  
Let $N_{p_i}O_{p_i}$ be the normal space of $O_{p_i} $ at $p_i$ in $P_{G_i}$, i.e., $N_{p_i}O_{p_i}$ is the subspace of $\R^{2|V_i|}$ normal to the tangent space $T_{p_i}O_{p_i}$. Note that from Lemma~\ref{LEQUIV},  $T_{p_i}O_{p_i}$ is in the kernel of $H_{p_i}$. Since $H_{p_i}$ is a symmetric matrix, we have that $N _{p_i}O_{p_i}$ is invariant under $H_{p_i}$, i.e., if $v\in N_{p_i}O_{p_i}$, then $H_{p_i}v \in N_{p_i}O_{p_i}$. We also note that $ \dim T_{p_i}O_{p_i} = 3$, and hence by the fact that $|E_i| = 2|V_i| - 3$, we obtain 
$
\dim N_{p_i} O_{p_i}  = |E_i|
$.  

Now, for each $i=1,\ldots, m$,  choose an orthonormal basis  
$\{v_{i_1}, \ldots ,v_{i_{|E_i|}}\}$ of $N _{p_i}O_{p_i}$ such that each $v_{i_j}$ is an eigenvector of $H_{p_i}$, with $\lambda_{i_j}$ the corresponding eigenvalue. 
Recall that  $D_{p_i}\eta_i: T_{p_i}\cal{U}_i \longrightarrow T_pP_G \approx \R^{2n}$ is the derivative map defined in Corollary~\ref{cor:derivativeofeta}. 
With the eigenvectors $v_{i_j}$, for $j = 1,\ldots, |E_i|$, and the derivative map $D_{p_i}\eta_i$,  we define a set of vectors in $\R^{2n}$ as follows:
\begin{equation}\label{eq:definitionofwij}
w_{i_j} := D_{p_i}\eta_i(v_{i_j}), \hspace{10pt} \forall\, j = 1,\ldots,|E_i|.     
\end{equation}
Then, for each $i = 1,\ldots, m$, we define two matrices  as follows: 
$$
\left\{
\begin{array}{l}
\Lambda_i := \diag(\lambda_{i_1},\ldots, \lambda_{i_{|E_i|}}) \in \R^{|E_i|\times |E_i|}, \vspace{3pt}\\
W_i := (w_{i_1}, \ldots, w_{i_{|E_i|}}) \in \R^{2n \times |E_i|}.  
\end{array}
\right. 
$$
Further, we let $\{w_{0_1}, w_{0_2}, w_{0_3}\}$ be an orthonormal basis of $T_{p}O_{p}$, and define  $$W_0 : = \( w_{0_1}, w_{0_2}, w_{0_3} \) \in \R^{2n\times 3}.$$
By the fact that 
$\sum^m_{i=1} |E_i| + 3 = |E| + 3 = 2n$, 
we thus obtain two $2 n\times2 n$ square matrices as follows:
$$
\left\{
\begin{array}{l}
\Lambda : = \diag\(0_{3\times 3}, \Lambda_1, \ldots, \Lambda_m\), \vspace{3pt} \\
W : = \( W_{0}, W_{1}, \ldots, W_{m} \). 
\end{array}
\right. 
$$
In particular, $\Lambda$ is a diagonal matrix. Its diagonal entries are comprised of three zeros and the eigenvalues of $H_{p_i}$ for all $i = 1,\ldots, m$.   
With the notations above, we state the following result: 

\begin{pro}\label{pro:diagonalizationofH}
Let $\Lambda$ and $W$ be the two square matrices defined above. Then, $W$ is nonsingular, and moreover, 
$W^\top H_p W = \Lambda$.  
\end{pro}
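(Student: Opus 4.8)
The plan is to exploit the additive structure of the potential. Since the independent partition $\{E_i\}_{i=1}^m$ partitions $E$, formula~\eqref{PHI} (applied to $\Phi$ and to each $\Phi_i$ as in~\eqref{eq:inducedpotential}) gives $\Phi(p)=\sum_{i=1}^m\Phi_i(p|_{V_i})$. Writing $P_s:\R^{2n}\to\R^{2|V_s|}$ for the coordinate projection $w\mapsto w|_{V_s}$ and differentiating twice, I would first record
\[
H_p=\sum_{s=1}^m P_s^\top H_{p_s}\,P_s .
\]
Consequently, for any two columns $w,w'$ of $W$ one has $w^\top H_p w'=\sum_{s=1}^m (w|_{V_s})^\top H_{p_s}(w'|_{V_s})$, so the entire computation of $W^\top H_p W$ reduces to tracking, block by block, where each restricted vector $w|_{V_s}$ lands and how $H_{p_s}$ acts on it.

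There are two structural inputs I would isolate before computing. First, from Corollary~\ref{cor:derivativeofeta}, the columns $w_{i_j}=D_{p_i}\eta_i(v_{i_j})$ satisfy $w_{i_j}|_{V_i}=v_{i_j}\in N_{p_i}O_{p_i}$ while $w_{i_j}|_{V_s}\in T_{p_s}O_{p_s}$ for every $s\neq i$; moreover, since the $\SE(2)$-action is diagonal, restricting any column of $W_0$ (which lies in $T_pO_p$) to a block $V_s$ yields an element of $T_{p_s}O_{p_s}$. Second, the only genuine structural fact required is that \emph{each orbit tangent space lies in the kernel of its sub-Hessian}, i.e.\ $T_{p_s}O_{p_s}\subseteq\ker H_{p_s}$ for all $s$. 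This is Lemma~\ref{LEQUIV} applied to the subsystem $\cal{S}_s$ at $p_s$, and it is legitimate because Proposition~\ref{pro:equilibriumcondition} guarantees that $p_s$ is an equilibrium of $\cal{S}_s$ (so $p_s$ is a critical point of $\Phi_s$); since $H_{p_s}$ is symmetric, it then annihilates $T_{p_s}O_{p_s}$ on either side of the bilinear form.

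With these in hand the block computation is routine. The columns of $W_0$ restrict to orbit-tangent vectors on \emph{every} block, so every summand in $w_{0_a}^\top H_p w$ contains a factor $H_{p_s}(\text{tangent})=0$; hence $W_0^\top H_pW_0=0_{3\times3}$ and $W_0^\top H_pW_i=0$ for all $i$. For $i\neq k$, in each summand $(w_{i_j}|_{V_s})^\top H_{p_s}(w_{k_l}|_{V_s})$ at least one of the two restricted vectors is orbit-tangent — the $V_s$-restriction of $w_{i_j}$ when $s\neq i$, and the $V_i$-restriction of $w_{k_l}$ when $s=i$ (since then $s\neq k$) — so the term vanishes and $W_i^\top H_pW_k=0$. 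Finally, in $w_{i_j}^\top H_p w_{i_l}$ every $s\neq i$ term dies because $w_{i_l}|_{V_s}$ is tangent, while the $s=i$ term is $v_{i_j}^\top H_{p_i}v_{i_l}=\lambda_{i_j}\delta_{jl}$ by the choice of eigenbasis; thus $W_i^\top H_pW_i=\Lambda_i$. Assembling the blocks yields $W^\top H_pW=\Lambda$. For nonsingularity I would argue independently of $H_p$: suppose $\sum_a\alpha_a w_{0_a}+\sum_{k,l}\beta_{k_l}w_{k_l}=0$ and restrict to a block $V_i$. Using the placement from Corollary~\ref{cor:derivativeofeta}, the $N_{p_i}O_{p_i}\oplus T_{p_i}O_{p_i}$ decomposition of the restricted relation has normal part exactly $\sum_l\beta_{i_l}v_{i_l}$, which must vanish; orthonormality of the $v_{i_l}$ forces $\beta_{i_l}=0$ for all $l$, and running over $i$ kills every $\beta$. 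Then $\sum_a\alpha_a w_{0_a}=0$ gives $\alpha_a=0$ as the $w_{0_a}$ are a basis of $T_pO_p$. Since the column count is $3+\sum_i|E_i|=3+|E|=2n$, $W$ is a nonsingular $2n\times2n$ matrix.

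The main obstacle is not any single calculation but securing and correctly deploying the kernel property $T_{p_s}O_{p_s}\subseteq\ker H_{p_s}$: this is precisely what makes all cross- and off-diagonal blocks collapse, and it is the step that genuinely uses that the $p_s$ are sub-equilibria (Proposition~\ref{pro:equilibriumcondition}) together with the equivariance of Lemma~\ref{LEQUIV}. The secondary delicate point is the bookkeeping "normal on its own block, orbit-tangent on the others," supplied by Corollary~\ref{cor:derivativeofeta}; getting this placement exactly right is what guarantees that the diagonal blocks see only the eigenvalues $\lambda_{i_j}$ and nothing else.
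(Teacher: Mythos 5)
Your proposal is correct where it applies and follows essentially the same route as the paper's proof in Appendix~A.2: your identity $H_p=\sum_{s}P_s^\top H_{p_s}P_s$ is precisely the paper's decomposition $H_p=\sum_i\wh H_{p_i}$ into zero-padded sub-Hessians, the placement facts you draw from Corollary~\ref{cor:derivativeofeta} are the same three cases the paper records, your block-by-block evaluation reproduces the paper's computation of $\langle w_{i'_{j'}},H_p w_{i_j}\rangle$, and your nonsingularity argument (restrict a vanishing combination to each $V_i$, split into normal plus tangent parts, use orthonormality of the $v_{i_j}$) is the paper's argument verbatim.

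The one point where you genuinely depart from the paper is the justification of $T_{p_s}O_{p_s}\subseteq\ker H_{p_s}$: the paper cites Lemma~\ref{LEQUIV} alone, while you first invoke Proposition~\ref{pro:equilibriumcondition} to ensure that each $p_s$ is a critical point of $\Phi_s$, so that Lemma~\ref{LEQUIV} (which is stated only for \emph{critical} orbits) actually applies. This extra step is necessary, not cosmetic: at a non-critical sub-configuration the rotational tangent vector is \emph{not} annihilated by $H_{p_s}$ (for a single-edge sub-framework on a line, $H_{p_s}$ sends the rotation vector to $f_{jk}(d_{jk})$ times a nonzero vector). The price is that your proof requires $p$ to be an equilibrium of~\eqref{MODEL}, whereas the proposition is stated for arbitrary $p\in P_G$, and the Remark following Theorem~\ref{MBIF} insists that no such hypothesis is needed. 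The discrepancy here lies in the paper, not in your argument: the unrestricted statement is false. For a nondegenerate triangle with all three distances strictly larger than their targets (e.g.\ under $u_{ij}=1-(\ol d_{ij}/d)^2$), each sub-Hessian $H_{p_i}$ has signature $(2,0,2)$ and $\Lambda$ has signature $(3,0,3)$, while $H_p$ is a sum of three positive-semidefinite matrices whose kernels intersect in the two-dimensional space of translations, so $N(H_p)=(4,0,2)$; by Sylvester's law of inertia, no invertible $W$ can then satisfy $W^\top H_pW=\Lambda$. (Only the congruence needs criticality; the nonsingularity of $W$ holds for every $p$, in your proof as in the paper's.) Since the paper applies the proposition only at critical points --- in Corollaries~\ref{cor6} and~\ref{cor:forstronglyrigidconf} and in the proof of Theorem~\ref{MAIN} --- nothing downstream is affected, and your version, with the equilibrium hypothesis made explicit, is the correct statement of the result.
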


We refer to Appendix~A.2 for a proof of Proposition~\ref{pro:diagonalizationofH}. Theorem~\ref{MBIF} is then established by appealing to Proposition~\ref{pro:diagonalizationofH} and Sylvester's Law of inertia~\cite{sylvester1852xix}, which says that $N(H_p) = N(\Lambda)$.

\subsection{Proof of Theorem~\ref{MAIN}}
We prove in this subsection Theorem~\ref{MAIN}. Let $p\in P_G$ be a critical point of $\Phi$, and $O_p$ be the corresponding critical orbit. Recall that $O_p$ is said to be nondegenerate if $N_0(H_p) = 3$, and exponentially stable if $N(H_p) = (2n - 3, 0, 3)$. 
We first state a fact as a corollary to Theorem~\ref{MBIF}: 

\begin{cor}\label{cor6}
Let $(G,p)$ be a framework, with $G$ a triangulated Laman graph and $p\in P_G$ a critical point of $\Phi$. Let $\{(G_i,p_i)\}^m_{i=1}$ be the frameworks associated with the independent partition for $(G,p)$. Then,  
\begin{enumerate}
\item The critical orbit $O_p$ is nondegenerate if and only if each $O_{p_i}$ is nondegenerate. 
\item The critical orbit $O_p$ is exponentially stable if and only if each $O_{p_i}$ is exponentially stable. 
 \end{enumerate}\,
\end{cor}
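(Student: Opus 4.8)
The plan is to deduce everything from the signature formula of Theorem~\ref{MBIF} together with the dimension bookkeeping forced by the Laman condition, using the fact (Lemma~\ref{LEQUIV}) that each orbit contributes at least three zero eigenvalues. The only quantity that~\eqref{INDEXF} does not directly supply is the number of zero eigenvalues $N_0(H_p)$, so the first task is to express $N_0(H_p)$ in terms of the $N_0(H_{p_i})$, after which both parts follow by elementary counting.

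To obtain that expression I would start from the total-dimension identities $N_+(H_p)+N_-(H_p)+N_0(H_p)=2n$ and $N_+(H_{p_i})+N_-(H_{p_i})+N_0(H_{p_i})=2|V_i|$, substitute the two equalities of~\eqref{INDEXF}, and use the Laman counts $|E_i|=2|V_i|-3$ (each $G_i$ is a triangulated Laman graph by part~\ref{cond1} of Theorem~\ref{indpar}) together with $|E|=\sum_i|E_i|=2n-3$. Writing $2|V_i|=|E_i|+3$ and summing gives $\sum_i 2|V_i|=|E|+3m=2n-3+3m$, whence
\begin{equation}
N_0(H_p)=2n-\sum_i\big(2|V_i|-N_0(H_{p_i})\big)=3+\sum_{i=1}^m\big(N_0(H_{p_i})-3\big).
\end{equation}
Crucially, since each $O_{p_i}$ has dimension $3$, Lemma~\ref{LEQUIV} guarantees $N_0(H_{p_i})\ge 3$, so every summand above is nonnegative.

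With this identity in hand the two parts are immediate. For part~1, $O_p$ is nondegenerate iff $N_0(H_p)=3$, which by the displayed identity holds iff $\sum_i\big(N_0(H_{p_i})-3\big)=0$; nonnegativity of the summands forces each $N_0(H_{p_i})=3$, i.e. each $O_{p_i}$ nondegenerate, and the converse is read off the same identity. For part~2, exponential stability of $O_p$ means $N(H_p)=(2n-3,0,3)$, equivalently $N_-(H_p)=0$ and $N_0(H_p)=3$. If every $O_{p_i}$ is exponentially stable then $N_-(H_{p_i})=0$ and $N_0(H_{p_i})=3$ for all $i$, so~\eqref{INDEXF} gives $N_-(H_p)=0$ and part~1 gives $N_0(H_p)=3$; conversely, $N_-(H_p)=\sum_i N_-(H_{p_i})=0$ with each $N_-(H_{p_i})\ge 0$ forces every $N_-(H_{p_i})=0$, while $N_0(H_p)=3$ forces every $N_0(H_{p_i})=3$ by part~1, so each $O_{p_i}$ is exponentially stable.

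The argument is short, and the only place needing care — really its crux — is the descent from the aggregate quantities to the individual ones. This works precisely because the ``excess'' zero counts $N_0(H_{p_i})-3$ are nonnegative (Lemma~\ref{LEQUIV}) and sum to a value that attains its minimum exactly when $O_p$ is nondegenerate; the identical nonnegative-summand structure is what lets $N_-(H_p)=0$ propagate to each $N_-(H_{p_i})=0$. No analytic estimates or further geometry are required beyond Theorem~\ref{MBIF}; alternatively one could read the entire statement directly off $W^\top H_p W=\Lambda$ from Proposition~\ref{pro:diagonalizationofH}, since $\Lambda$ is block diagonal with the tangent directions supplying exactly the three mandatory zeros.
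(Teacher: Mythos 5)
Your proposal is correct and follows essentially the same route as the paper: both rest on Theorem~\ref{MBIF}, the Laman counts $|E_i|=2|V_i|-3$, $|E|=2n-3$, and the bound $N_0(H_{p_i})\ge 3$ from Lemma~\ref{LEQUIV}; your identity $N_0(H_p)=3+\sum_{i=1}^m\bigl(N_0(H_{p_i})-3\bigr)$ is just an algebraic repackaging of the paper's inequality chain $N_+(H_p)+N_-(H_p)\le\sum_i|E_i|=|E|$ with equality iff equality holds in each summand. Part~2 is handled identically in both arguments, as an immediate consequence of~\eqref{INDEXF} together with part~1.
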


\begin{proof}
For part 1, we need to show that $N_0(H_p) = 3$ if and only if  $N_0(H_{p_i}) = 3$ for all $i=1,\ldots, m$. Since $G$ is a Laman graph,  we have 
$
|E| = 2|V| - 3 
$. On the other hand,  the dimension of the formation system is $2|V|$, and hence  
$
N_+(H_p) + N_-(H_p) + N_0(H_p) = 2|V|
$. 
Using the fact that  $N_0(H_p) \ge 3$ (from Lemma~\ref{LEQUIV}), we have
$
N_+(H_p) + N_-(H_p) \le |E|
$, 
and the equality holds if and only if  $N_0(H_p) = 3$. 
Similarly, since each $G_i$ is also a Laman graph,  we obtain
\begin{equation}\label{eq:n+n-i}
N_+(H_{p_i}) + N_-(H_{p_i}) \le |E_i|,
\end{equation}
and the equality holds if and only if  $N_0(H_{p_i}) = 3$. Then, from Theorem~\ref{MBIF}, we have
\begin{equation}\label{eq:sumn+n-}
N_+(H_p) + N_-(H_p)   \le  \sum^m_{i=1} | E_i | = |E|.
\end{equation}
The last equality holds because $\{E_i\}^m_{i=1}$ is a partition of $E$. Following~\eqref{eq:n+n-i} and~\eqref{eq:sumn+n-}, we see that $N_+(H_p) + N_-(H_p) = |E|$ if and only if  $N_+(H_{p_i}) + N_-(H_{p_i}) = |E_i|$ for all $i = 1,\ldots, m$.  
In other words, $N_0(H_p) = 3$ if and only if  $N_0(H_{p_i}) = 3$ for all $i =1, \ldots, m$. For part~2,  it suffices to show that $N_-(H_p) = 0$ if and only if $N_-(H_{p_i}) = 0$ for all $i = 1,\ldots, m$, which is an immediate consequence of~\eqref{INDEXF}.
\qquad\end{proof}

With the use of Theorem~\ref{MBIF} and Corollary \ref{cor6}, establishing Theorem~\ref{MAIN} can be reduced to determining the  signature of $H_p$ for $p$ either a strongly rigid configuration, or a line configuration. 

\paragraph{Critical configurations that are strongly rigid} We deal with the first case in the following corollary to Theorem~\ref{MBIF}:

\begin{cor}\label{cor:forstronglyrigidconf}
Let system~\eqref{MODEL} be a triangulated formation system. Let $p$ be a critical orbit of $\Phi$. If $O_p$ is strongly rigid, then it is exponentially stable, and moreover, the distance between $ x_i$ and $ x_j$ in $p$ is the target distance $\ol d_{ij}$ for all $(i,j)\in E$. 
\end{cor}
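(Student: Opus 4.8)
The plan is to exploit the fact that, when $p$ is strongly rigid, the independent partition for $(G,p)$ degenerates to the finest possible one. By Remark~\ref{rmk:indepedentpartitionforstronglyconf}, we have $\{E_i\}^m_{i=1} = \{\{(i,j)\} \mid (i,j)\in E\}$, so each subgraph $G_i$ consists of a single edge on two vertices. This reduces the whole statement to an analysis of the two-agent subsystem on a single edge, which can be done by hand, after which Corollary~\ref{cor6} lifts the conclusion back to $O_p$: it suffices to show that each single-edge orbit $O_{p_i}$ is exponentially stable, and in addition that each edge length equals its target value.

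First I would establish the claim about distances. Since $p$ is a critical point of $\Phi$, Proposition~\ref{pro:equilibriumcondition} guarantees that each $p_i$ is an equilibrium of the subsystem induced by $G_i$. For a single edge $(i,j)$ this subsystem is $\dot x_i = f_{ij}(d_{ij})(x_j - x_i)$ and $\dot x_j = f_{ij}(d_{ij})(x_i - x_j)$, where $f_{ij}(\cdot) := u_{ij}(\cdot,\ol d_{ij})$. Because $x_i \neq x_j$ on $P_{G_i}$, the equilibrium condition forces $f_{ij}(d_{ij}) = 0$; since by condition~C1 of Definition~\ref{def:interactionfunction} the function $f_{ij}$ has a unique zero, which by hypothesis sits at $\ol d_{ij}$, we conclude $d_{ij} = \ol d_{ij}$ for every $(i,j)\in E$.

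Next I would compute the signature of the single-edge Hessian. Writing $z := x_i - x_j$ and $\psi(z) := \int_1^{\|z\|} s f_{ij}(s)\,ds$, the potential $\Phi_i$ equals $\psi(x_i - x_j)$, so its Hessian has the block form $H_{p_i} = \bigl(\begin{smallmatrix} A & -A \\ -A & A \end{smallmatrix}\bigr)$ with $A := \nabla^2_z\psi$. A direct computation gives $A = f_{ij}(\|z\|)\,I + f_{ij}'(\|z\|)\,zz^\top/\|z\|$, and at the critical point $f_{ij}(d_{ij}) = 0$, so $A = \bigl(f_{ij}'(d_{ij})/d_{ij}\bigr)\,zz^\top$, a rank-one matrix whose unique nonzero eigenvalue is $f_{ij}'(d_{ij})\,d_{ij}$. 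The key point is that this eigenvalue is strictly positive: differentiating $s\mapsto s f_{ij}(s)$ at $s = d_{ij}$ and using $f_{ij}(d_{ij}) = 0$ yields $d_{ij} f_{ij}'(d_{ij}) > 0$ by condition~C1, whence $f_{ij}'(d_{ij}) > 0$. Decomposing $\R^4$ into the translation subspace $\{(v,v)\}$, on which $H_{p_i}$ vanishes, and the subspace $\{(v,-v)\}$, on which $H_{p_i}$ acts as $2A$, I obtain $N(H_{p_i}) = (1,0,3)$, so $O_{p_i}$ is exponentially stable; Corollary~\ref{cor6} then gives that $O_p$ is exponentially stable.

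Most of the difficulty has already been absorbed into Theorem~\ref{MBIF} and Corollary~\ref{cor6}, so the only genuine content left is the single-edge computation. The one place where care is needed is the sign of the nonzero eigenvalue, and this is precisely where the monotonicity hypothesis~C1 enters: without $d\bigl(xf_{ij}(x)\bigr)/dx > 0$ the two-agent equilibrium at $\ol d_{ij}$ need not be stable, so I expect this step, rather than the bookkeeping, to be the crux.
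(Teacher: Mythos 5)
Your proposal is correct and follows essentially the same route as the paper's proof: the finest independent partition for a strongly rigid configuration, Proposition~\ref{pro:equilibriumcondition} to force $f_{ij}(d_{ij})=0$ and hence $d_{ij}=\ol d_{ij}$, the single-edge Hessian computation with condition~C1 giving $N(H_{p_i})=(1,0,3)$, and then lifting via the partition signature formula. The only differences are cosmetic: you compute the edge Hessian coordinate-free (via $z=x_i-x_j$ and the translation/anti-translation splitting) where the paper rotates the edge onto the first axis, and you invoke Corollary~\ref{cor6} where the paper sums signatures directly with Theorem~\ref{MBIF}.
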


\begin{proof}
Let $\{(G_i,p_i)\}^{m}_{i=1}$ be the frameworks associated with the independent partition for $(G,p)$.  Since $p$ is strongly rigid, the independent partition for $(G,p)$ has each edge of $G$ in a distinct subset, i.e. each subgraph $G_i = (V_i, E_i)$ has only two vertices and one edge and we  have $m = 2n - 3$. Also,  each $p_i$ is a sub-configuration of two agents, and  by Proposition~\ref{pro:equilibriumcondition}, it is an equilibrium of the subsystem induced by $G_i$. Hence, for all $(i,j) \in E$, we have $f_{ij}(d_{ij})=0$, which implies that $d_{ij} = \ol d_{ij}$.

 We now compute the signature of $H_{p_i}$. Suppose that $p_i$ consists of agents $ x_{j}$ and $ x_{k}$; then, the potential function $\Phi_i$ induced by $G_i$ is given by
 $$
 \Phi_i(x_j, x_k) = \int^{\|x_j - x_k \|}_{1} s f_{jk}(s) ds.  
 $$
 From Lemma~\ref{LEQUIV}, we may rotate and/or translate $p$ so that both $ x_{j}$ and $ x_{k}$ are on the first coordinate. 
A direct computation of $H_{p_i}$ yields that 
\begin{equation*}
H_{p_i} = 
\ol d_{jk}\, f'_{jk}\(\ol d_{jk}\)
\begin{pmatrix}
1 & 0 & -1 & 0\\
0 & 0 & 0 & 0\\
-1 & 0 & 1 & 0\\
0 & 0 & 0 & 0
\end{pmatrix}. 
\end{equation*} 
Moreover, from condition~C1 in Definition~\ref{def:interactionfunction}, we have 
\begin{equation*}
\frac{d}{dx}\(xf_{jk}(x) \) \left |_{x = \ol d_{jk}} \right.= \ol d_{jk} \, f'_{jk}\(\ol d_{jk} \) >0,  
\end{equation*}
and hence 
$ N(H_{p_i}) = (1, 0, 3)$. 
Now, appealing to Theorem~\ref{MBIF},  we obtain
\begin{equation*}
\left\{
\begin{array}{l}
N_+(H_p) = \sum^m_{i=1}N_+(H_{p_i}) = m, \vspace{3pt}\\
N_-(H_p) =  \sum^m_{i=1}N_-(H_{p_i}) = 0.
\end{array}
\right.
\end{equation*} 
Using the fact that $m = 2n - 3$, we conclude that 
$
 N(H_p) = (2n-3,0,3)
$,  
which implies that $O_p$ is exponentially stable. 
\qquad\end{proof}

\paragraph{Critical line configurations} We now  focus on the case where $p\in P_{G}$ is a critical line configuration. 
 It is computationally convenient to stack the first-coordinates (resp. the second-coordinates) of the agents in vectors $a$ and $b$. Precisely, we re-arrange the entries of $p=(x_1,\ldots,x _n)$ as follows: write $x_i = (a_i, b_i)$, for $a_i$ and $b_i$ scalars, and define
$a:= (a_1,\ldots, a _n)$  and $b: = \(b_1, \ldots, b_n \)$. 
We then redefine the vector $p$ as
$
p := \(a, b\) \in \R^{2n}
$.

An advantage of the re-arrangement is that the dynamics~\eqref{MODEL} can be re-cast into a matrix form. Specifically, for a configuration $p\in P_G$, let $F_p$ be an $n \times n $ symmetric, zero-row/column-sum matrix. The off-diagonal entries of $F_p$ are given by
\begin{equation}\label{GMAT'}
F_{p, ij}:=
\left\{
\begin{array}{ll}
-f_{ij}(d_{ij}) & \text{if }(i,j)\in E, \\
0 & \text{otherwise}. 
\end{array}
\right.
\end{equation}
The diagonal entries are then obtained by using the condition that the rows/columns  of $F_p$ sum to zero. So then, with the re-arrangement, system~\eqref{MODEL} can be expressed as follows:
$$
\begin{pmatrix}
\dot a \\
\dot b
\end{pmatrix} = -
\begin{pmatrix}
F_p & 0\\
0 & F_p
\end{pmatrix}
\begin{pmatrix}
a \\
b
\end{pmatrix}
$$

We compute below the Hessian matrix $H_p$. 
By Lemma~\ref{LEQUIV}, we can assume, without loss of generality, that the line configuration $p$ is \emph{on the first coordinate}, i.e., we assume that $ b =  0$. So then, by computation, the Hessian $H_p$ is a block-diagonal matrix given by
\begin{equation}\label{HESS}
H_p =
\begin{pmatrix}
D_p & 0\\
0 & F_p
\end{pmatrix}
\end{equation}
where $D_p$ is again an $n\times n $ symmetric, zero-row/column-sum matrix. The off-diagonal entries of $D_p$ are given by  
\begin{equation}\label{GMAT}
D_{p, ij}:=
\left\{
\begin{array}{ll}
\left. -\dfrac{d}{dx}\right|_{x = d_{ij}} \(xf_{ij}(x)\)   & \text{if }(i,j)\in E, \\
0 & \text{otherwise}.
\end{array}
\right.
\end{equation}
The diagonal entries of $D_p$ are again determined by the condition that rows/columns of $D_p$ sum to zero. 
We now show that if $n \ge 3$, then $N_-(H_p) \ge 1$. First, note that from~\eqref{HESS}, we have
$
N_-(H_p) = N_-(D_p) + N_-(F_p)
$. 
For the matrix $D_p$, we note that from~\eqref{GMAT} and condition~C1 in Definition~\ref{def:interactionfunction}, all off-diagonal entries of $D_p$ are non-positive. Hence, $D_p$ is the negative of an infinitesimally stochastic matrix; in particular, by the Gershgorin circle theorem, all eigenvalues of $D_p$ are non-negative, which implies that $N_-(p) = 0$.  For the matrix $F_p$, we have the following fact: 

\begin{pro}\label{pro:signatureofFp} Let system~\eqref{MODEL} be a triangulated formation system of at least three agents. Let $(G,p)$ be a line framework, with $p\in P_G$ a critical point of $\Phi$ on the first coordinate. Then, $N_-(F_p) \ge 1$, for $F_p$ defined by~\eqref{GMAT'}.  
\end{pro}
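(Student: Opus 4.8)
The plan is to peel the problem down to the single block $F_p$ and then induct on the number $n$ of vertices, using the \emph{system reduction} of Lemma~\ref{lem:systemreduction} to remove one agent at a time. Since $H_p$ is block diagonal by~\eqref{HESS}, we have $N_-(H_p)=N_-(D_p)+N_-(F_p)$, and it has just been shown that $N_-(D_p)=0$ (by condition~C1 the matrix $D_p$ is the negative of an infinitesimally stochastic matrix). Thus everything reduces to proving $N_-(F_p)\ge 1$, which is what I would establish by induction.

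For the base case $n=3$, the graph is a single $3$-cycle with the three agents collinear, so exactly one distance is the sum of the other two; say $d_{13}=d_{12}+d_{23}$. Writing the equilibrium conditions $\sum_j f_{ij}(d_{ij})(a_j-a_i)=0$ on the line gives $f_{12}(d_{12})d_{12}=f_{23}(d_{23})d_{23}=\mu$ and $f_{13}(d_{13})d_{13}=-\mu$ for a common scalar $\mu$. I would then rule out $\mu\ge 0$: if $\mu>0$ the two short edges are in tension and the long one in compression, forcing $d_{13}=d_{12}+d_{23}>\ol d_{12}+\ol d_{23}>\ol d_{13}$ while simultaneously $d_{13}<\ol d_{13}$, contradicting the strict triangle inequality; and $\mu=0$ would make the degenerate triangle a target configuration, again violating it. Hence $\mu<0$, so $\operatorname{tr}(F_p)=2(f_{12}+f_{13}+f_{23})=2\mu\bigl(\tfrac{1}{d_{12}}+\tfrac{1}{d_{23}}-\tfrac{1}{d_{13}}\bigr)<0$, the parenthesis being positive since $d_{13}>d_{12},d_{23}$. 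Because $F_p$ has zero row sums, a negative trace forces $N_-(F_p)\ge 1$.

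For the inductive step with $n\ge 4$, I would pick a degree-$2$ vertex $k$ adjacent to (collinear) vertices $i,j$ and order it last, so that
$$
F_p=\begin{pmatrix} A & c\\ c^\top & \delta\end{pmatrix},\qquad \delta=f_{ik}(d_{ik})+f_{jk}(d_{jk}),
$$
with $c$ supported on $\{i,j\}$, $c_i=-f_{ik}(d_{ik})$, $c_j=-f_{jk}(d_{jk})$. The central computation is that the Schur complement $S:=A-\delta^{-1}cc^\top$ is \emph{exactly} the matrix $F_{p^*}$ of~\eqref{GMAT'} for a reduction $\cal{R}$ of the system onto $G^*=G-k$ in the sense of Lemma~\ref{lem:systemreduction}: matching entries yields $S_{ij}=-\bigl(f_{ij}(d_{ij})+f_{ik}f_{jk}/\delta\bigr)$, and the quantity $f_{ik}f_{jk}/\delta$ is precisely the value $g_{ij}(d_{ij})$ of~\eqref{eq:defgproperty1} when $x_i,x_j,x_k$ are aligned. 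By the Haynsworth inertia refinement of Sylvester's law, $N_-(F_p)=N_-(\delta)+N_-(F_{p^*})$ whenever $\delta\ne 0$. If $\delta<0$ we are done at once. If $\delta=0$ with $c\ne 0$, a $2\times2$ principal minor equal to $-f_{ik}^2<0$ together with Cauchy interlacing already gives $N_-(F_p)\ge 1$. If $\delta=0$ and $c=0$ (that is, $f_{ik}=f_{jk}=0$), then $g_{ij}=0$, the reduced laws are unchanged, $F_p=\diag(F_{p^*},0)$, and $\cal{R}$ is a genuine triangulated formation system on $G^*$, so the inductive hypothesis applies directly.

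The remaining case $\delta>0$, where $N_-(F_p)=N_-(F_{p^*})$ and one must obtain $N_-(F_{p^*})\ge 1$ from the inductive hypothesis, is where I expect the real difficulty. Only the single value $f^*_{ij}(d_{ij})=f_{ij}(d_{ij})+f_{ik}f_{jk}/\delta$ enters $F_{p^*}$, and since Lemma~\ref{lem:systemreduction} leaves $g_{ij}$ arbitrary away from $d_{ij}$, I am free to realize the reduced law $f^*_{ij}$ as a monotone attraction/repulsion function in the sense of Definition~\ref{def:interactionfunction}, with a new target $\tilde d_{ij}$ placed on the side of $d_{ij}$ prescribed by the sign of that value. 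If $\tilde d_{ij}$ can be chosen so that the strict triangle inequalities persist for every $3$-cycle of $G^*$ through the edge $(i,j)$, then $\cal{R}$ is again a triangulated formation system on $n-1$ vertices with collinear critical point $p^*$, and the induction closes. \emph{Establishing this persistence is the main obstacle}: I would argue that the admissible window for $\tilde d_{ij}$ forced by the triangle inequalities always reaches the required side of $d_{ij}$, by combining the sign of $\delta$ and of $f_{ij}(d_{ij})$ (which is positive precisely when $k$ lies between $i$ and $j$) with the degenerate collinear relations among $d_{ij},d_{im},d_{jm}$ at each such triangle. A useful simplification here is that the equivariant Morse hypothesis plays no role in any signature computation, so it may be dropped from the inductive statement, leaving only monotonicity of the control laws and the strict triangle inequalities to be propagated through the reduction.
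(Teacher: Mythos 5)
Your proposal contains a genuine gap, and it is exactly the case you yourself flag: $\delta>0$. Everything up to that point is correct, and in places cleaner than the paper's own argument: the base case via $\operatorname{tr}(F_p)=2\mu\bigl(\tfrac{1}{d_{12}}+\tfrac{1}{d_{23}}-\tfrac{1}{d_{13}}\bigr)<0$ is a nice shortcut (the paper instead shows a diagonal entry is negative); the identification of the Schur complement $A-\delta^{-1}cc^\top$ with $F_{p^*}$ of the reduced system is correct (one checks $g_{ij}(d_{ij})=f_{ik}(d_{ik})(a_k-a_i)/(a_j-a_i)=f_{ik}f_{jk}/\delta$ using the equilibrium of $x_k$), and Haynsworth additivity replaces the paper's explicit eigenvector-extension computation $F_p u=\lambda(v,0)$; the cases $\delta<0$ and $\delta=0$ are handled correctly; and your remark that the equivariant Morse hypothesis must be dropped from the inductive statement is a bookkeeping point the paper itself relies on implicitly. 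But the inductive step is only complete if, when $\delta>0$, the reduced system $\cal{R}$ can be certified to satisfy the hypotheses of the proposition on $n-1$ vertices, and this is precisely what you leave as "the main obstacle," with only a sketch about an "admissible window" for a new target $\tilde d_{ij}$.

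The missing idea — which is how the paper closes the induction — is that \emph{no new target is needed}: the sign analysis in the surviving case forces $\ol d_{ij}$ itself to lie on the correct side of $d_{ij}$. Concretely, if $k$ lies between $i$ and $j$, the equilibrium of $x_k$ makes $f_{ik}(d_{ik})$ and $f_{jk}(d_{jk})$ have the same sign, so $\delta>0$ forces both positive, hence $d_{ik}>\ol d_{ik}$ and $d_{jk}>\ol d_{jk}$ by C1, hence $d_{ij}=d_{ik}+d_{jk}>\ol d_{ik}+\ol d_{jk}>\ol d_{ij}$ by the strict triangle inequality for the $3$-cycle $(i,j,k)$; moreover $f_{ij}(d_{ij})>0$ and $g_{ij}(d_{ij})=f_{ik}f_{jk}/\delta>0$, so $f^*_{ij}(d_{ij})>0$ with $d_{ij}>\ol d_{ij}$. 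If $k$ lies outside, say $a_i<a_j<a_k$, then $\delta>0$ forces $f_{ik}<0<f_{jk}$, hence $d_{ij}=d_{ik}-d_{jk}<\ol d_{ik}-\ol d_{jk}<\ol d_{ij}$ and $f^*_{ij}(d_{ij})=f_{ij}(d_{ij})+f_{ik}f_{jk}/\delta<0$. In both cases the value of $f^*_{ij}$ at $d_{ij}$ and the position of $d_{ij}$ relative to $\ol d_{ij}$ are compatible, so $g_{ij}$ can be chosen (with $dg_{ij}(d)$ monotone and $g_{ij}(\ol d_{ij})=0$) to make $f^*_{ij}$ a monotone attraction/repulsion function with the \emph{unchanged} zero $\ol d_{ij}$. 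The reduced system then has the same target distances as the original restricted to $G^*$, the strict triangle inequalities for $G^*$ hold verbatim, and the induction hypothesis applies with no "persistence" question ever arising. Without this step your argument does not establish $N_-(F_{p^*})\ge 1$ in the main case, so the proof as written is incomplete.
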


We refer to Appendix~A.3 for a proof of Proposition~\ref{pro:signatureofFp}. 
We also refer to~\cite{sun2015rigid} for a similar result about instability of degenerate critical formations.  
Equipped with the results above, we now prove Theorem~\ref{MAIN}. 

{\em Proof of Theorem~\ref{MAIN}}.
Let $p$ be an equilibrium of system \eqref{MODEL}. From Corollary~\ref{cor:forstronglyrigidconf}, if $p$ is strongly rigid, then $O_p$ is exponentially stable.  We now  assume that $p$ is not strongly rigid, and show that $O_p$ is unstable.  
Let $\{(G_i,p_i)\}^m_{i=1}$ be the frameworks  associated with the independent partition for $(G,p)$; without loss of generality, we assume that  $p_1$ contains at least three agents. 
From Proposition~\ref{pro:signatureofFp}, we have $N_-(H_{p_1}) \ge 1$. Appealing to Theorem~\ref{MBIF}, we obtain 
$
N_-(H_p) \ge N_-(H_{p_1})\ge 1
$, 
which implies that the orbit $O_p$ is unstable. We have thus proved that  a critical orbit of system~\eqref{MODEL} is stable if and only if  it is strongly rigid. From Corollary~\ref{cor:forstronglyrigidconf}, the set of stable critical orbits is characterized by the condition that $d_{ij}=\ol d_{ij}$ for all $(i,j)\in E$, and hence there are as many as $2^{n-2}$ stable critical orbits  in total. The convergence of system \eqref{MODEL} follows from Lemma~\ref{CONVGG}.
\qquad\endproof

\section{Future work and conclusions}\label{sec:conclusion}

To conclude, designing control laws that stabilize only the target configurations of a formation is known to be a challenging problem. Indeed, the conjunction of the decentralization constraints and the nonlinear nature of the dynamics lead to the appearance of undesirable equilibria in the system. Counting these equilibria is  in general a difficult and open problem, let alone characterizing them. Some progress in this direction has been made (see, for example, \cite{baillieul2007combinatorial,BDO2014CT,UH2013E}), yet no complete characterization is known yet. In this paper,  we have provided a partial solution to the problem by exhibiting a class of undirected graphs and control laws for which only desired configurations are stable.  We have furthermore derived a formula~\eqref{INDEXF} in Theorem~\ref{MBIF}, evaluating the signatures of the Hessians at critical orbits, which may be of independent interest. 

Amongst the topics not addressed in this paper, but of clear practical interest, we single out  imprecision or noise in the distance measurements.  Recall that the control laws $u_{ij}(\cdot, \ol d_{ij})$, for $(i,j)\in E$, takes the relative distance $\|x_j(t) - x_i(t)\|$ as information for its feedback. However, the measurement of the relative distance, taken by agents $x_i$ and $x_j$, may be corrupted by noise. Taking this into account, we propose the following model: 
\begin{equation}\label{eq:stochasticpath}
\dot x_i = \sum_{j\in \cal{N}_i} u_{ij}(\|x_j(t) - x_i(t)\| +\delta_{ij}(t),\ol d_{ij}) (x_j - x_i), \hspace{10pt} \forall i\in V, 
\end{equation} 
where each $\delta_{ij}(t)$  is a stochastic process modeling the measurement noise, or a fixed bias of the sensor.  Other sources of uncertainties are in bearing measurements, which entails studying a model where the terms $(x_j-x_i)$ are noisy or have a fixed bias. The effects of the first source of measurement errors on the dynamics has been studied recently in~\cite{mou2014CDC,USZB}. However, a complete understanding of the effect of measurement errors, and the robustness of the dynamics, is still lacking.
\bibliographystyle{unsrt}
\bibliography{FC}

\Appendix\section*{}
We provide here proofs of Propositions~\ref{pro:srcopendense},~\ref{pro:diagonalizationofH} and~\ref{pro:signatureofFp}. 

\subsection{Proof of Proposition~\ref{pro:srcopendense}}
 Let $Q_G\subset P_G$ be the set of strongly rigid configurations in $P_G$. We first show that $Q_G$ is open and dense, and then show that each configuration $p$ in $Q_G$ is infinitesimally rigid. 
Let vertices $i$, $j$, and $k$ form a $3$-cycle of $G$; we then let $T_{(i, j, k)}$ be a proper subspace of $P_G$ as follows:
$$
T_{(i,j,k)} := \left \{ p\in P_G \mid \det\(x_j - x_i, x_k - x_i \) = 0 \right \}.
$$
The codimension of  $T_{(i,j,k)}$ in $P_G $ is one. Further, we define
$
T_G := \cup_{(i,j,k)} T_{(i,j,k)} 
$   
 where the union is taken over all triplets of vertices $(i,j,k)$ such that they form a $3$-cycle of $G$.  Then,  
 $
 Q_G = P_G - T_G
$   
which implies that $Q_G$ is an open dense subset of $P_G$.

Recall that for a graph $G$, the distance map $\rho_G: P_{G}\longrightarrow \R^{|E|}_+$  is defined as
$$
\rho_G(p) = \(\cdots, \| x_i -  x_j\|^2,\cdots\)_{(i,j)\in E}. 
$$
Let $p$ be in $Q_G$; we now show that 
$$
\rk\(\frac{\partial \rho_G(p)}{\partial p}\) = 2n - 3.
$$
The proof will be carried out by induction on the number of vertices of $G$.   For the base case $n = 2$, we have $\rho_G(x_1, x_2)= \|x_2 -x_1\|^2$, and hence 
$$
\frac{\partial \rho_G(p)}{\partial p} = \( x^\top_1 - x^\top_2,\, x^\top_2 - x^\top_1 \).
$$
Since $x_1\neq x_2$, the rank of $\partial \rho_G(p) / \partial p$ is one. 

For the inductive step, assuming that the statement holds for $n < k$ for $k \ge 3$, we prove it for $n = k$. Choose a Henneberg sequence of $G$, and label the vertices of $G$ such that vertex $1$ is the last vertex appearing in the sequence, linking to the vertices~$2$ and~$3$. Let $G^* = \(V^*, E^*\)$ be the  subgraph induced by the vertices $V^* :=\{2, \ldots, k\}$, and $(G^*, p^*)$ be the corresponding framework. Note that $p^*$ is strongly rigid. Hence, from the induction hypothesis, we have
$$
\rk\(\left. \frac{\partial \rho_{G^*}(p')}{\partial p'} \right |_{p' = p^*}\) = 2(k-1) - 3.
$$ 
On the other hand, by computation, we have
$$
\frac{\partial \rho_G(p)}{\partial p} = 
\begin{pmatrix}
A_{11} & A_{12}\\
0 &  \left. \frac{\partial \rho_{G^*}(p')}{\partial p'} \right |_{p' = p^*}
\end{pmatrix}
$$
with $A_{11}$ a $2\times2$ matrix given by $A_{11} = \( x_1 - x_2, x_1 -x_3 \)^\top$. 
Since $p$ is strongly rigid, we have that $(x_1 - x_2)$ and $(x_1 - x_3)$ are linearly independent. So then,
$$
\rk\(\frac{\partial \rho_G(p)}{\partial p}\) = \rk\(A_{11} \) + \rk\(  \left. \frac{\partial \rho_{G^*}(p')}{\partial p'} \right |_{p' = p^*}  \) = 2k - 3.
$$
This completes the proof. 
\qquad \endproof


\subsection{Proof of Proposition~\ref{pro:diagonalizationofH}}
We first show that $W$ is nonsingular. Choose coefficients $\alpha_{i_j}$ in $\R$, and let 
\begin{equation}\label{eq:testeq}
\sum^m_{i=1}\sum^{|E_i|}_{j=1}\alpha_{i_j}w_{i_j} + \sum^3_{j = 1} \alpha_{0_j} w_{0_j} = 0.
\end{equation}
We show that all the coefficients $\alpha_{i_j}$ are zero. First, picking an $i = 1,\ldots, m$, we show that $\alpha_{i_j} = 0$ for all $j = 1,\ldots,|E_i|$. 
To this end, we evaluate $w_{i'_j} |_{V_i}$ for $i' = 0, 1,\ldots, m$. There are three cases to consider:
\begin{enumerate}
\item Suppose that $i' = 0$; since 
$w_{0_j}\in T_{p}O_p$, we have
\begin{equation}\label{eq:w0j}
w_{0_j} |_{V_i} \in T_{p_i}O_{p_i}, \hspace{10pt} \forall \, j = 1, 2, 3. 
\end{equation}
\item Suppose that $i' > 0$ and $i' \neq i$; then, from Corollary~\ref{cor:derivativeofeta}, we obtain
\begin{equation}\label{eq:wi'j}
w_{i'_j} |_{V_{i}} \in T_{p_i}O_{p_i}, \hspace{10pt} \forall \, j  = 1,\ldots,|E_{i'}|. 
\end{equation}
\item Suppose that $i' = i$; then, from Corollary~\ref{cor:derivativeofeta}, we obtain
\begin{equation}\label{eq:wij}
w_{i_j} |_{V_i} = v_{i_j}, \hspace{10pt} \forall \, j  = 1,\ldots,|E_{i}|.
\end{equation}
\end{enumerate}
Combining~\eqref{eq:w0j} and~\eqref{eq:wi'j}, we have
$$
v_{i_0} := (\sum^m_{i' = 1,\\ i'\neq i}\sum^{|E_{i'}|}_{j = 1}\alpha_{i'_j}\, w_{i'j} + \sum^3_{j = 1} \alpha_{0_j}\, w_{0_j}) \mid_{V_i} \in  T_{p_i}O_{p_i}, 
$$ 
Combining this with~\eqref{eq:wij}, we have
\begin{equation}\label{eq:waitingforali}
(\sum^m_{i=1}\sum^{|E_i|}_{j=1}\alpha_{i_j}w_{i_j} + \sum^3_{j = 1} \alpha_{0_j} w_{0_j} ) \mid_{V_i} = \sum^{|E_i|}_{j = 1} \alpha_{i_j}v_{i_j} + v_{i_0} = 0.
\end{equation}
Note that the vectors $v_{i_j}$, for $j = 1,\ldots, |E_i|$, form an orthonormal basis of $N _{p_i}O_{p_i}$, and hence~\eqref{eq:waitingforali} holds if and only if $v_{i_0} = 0$ and $\alpha_{i_j} = 0$ for all $j = 1,\ldots, |E_i|$. It now remains to show that $\alpha_{0_j} = 0$ for all $j = 1, 2, 3$. Note that~\eqref{eq:testeq} is now reduced to
$
\sum^3_{j = 1} \alpha_{0_j} w_{0_j} = 0 
$.   
Since the three vectors $w_{0_1}, w_{0_2}, w_{0_3}$ form a basis of $T_pO_p$, we conclude that $\alpha_{0_j} = 0$ for all $j = 1,2,3$. We have thus proved that the vectors $w_{i_j}$ are linearly independent, and hence $W$ is nonsingular.  


We now show that $W^\top H_{p} W = \Lambda$. 
First, recall that $\Phi_i$ is the potential function induced by $G_i = (V_i, E_i)$, which is defined over $P_{G_i}$. Let $p$ be in $P_{G}$; we define $\wh \Phi_i: P_G\longrightarrow \R$  as follows:
$$
\wh \Phi_i(p) : = \Phi_i(p_i)= \sum_{(j,k)\in E_i} \int^{d_{jk}}_{1} sf_{jk}(s) ds. 
$$
Since $\{E_i\}^m_{i=1}$ is a partition of $E$, we have
$
\Phi(p) = \sum^m_{i=1}\wh \Phi_i(p)  
$.  Hence, if we define $2n\times 2n$ matrices 
$
\wh H_{p_i}:= \partial^2\wh \Phi_i(p)/\partial p^2$  for $i = 1,\ldots, m$,    
then   
$
H_{p} = \sum^m_{i=1} \wh H_{p_i}
$. 
On the other hand, we recall that $H_{p_i}$ is the Hessian of $\Phi_i$ at $p_i$, which is a $|V_i| \times |V_i|$ matrix.  
A relationship between $\wh H_{p_i}$ and $H_{p_i}$ is the following: Each $\wh H_{p_i}$ can be derived by adding zero-rows/columns into $H_{p_i}$. Indeed, from definition,  if $j\notin V_i$, then the $j$-th row/column of $\wh H_{p_i}$ is zero. Moreover, by removing these zero-rows/column out of $\wh H_{p_i}$, we obtain $H_{p_i}$. We now use the relationship to derive some relevant facts. 

We recall that for a fixed $i > 0$, the vectors $v_{i_j}$, for $j = 1,\ldots, |E_i|$, form the orthonormal basis of $N _{p_i}O_{p_i}$. Moreover, they are eigenvectors of $H_{p_i}$, i.e.,  
$
H_{p_i} v_{i_j} = \lambda_{i_j} v_{i_j}
$. We also recall that from Lemma~\ref{LEQUIV}, $T_{p_i}O_{p_i}$ is in the kernel of $H_{p_i}$. 
Then, by combining~\eqref{eq:w0j}, ~\eqref{eq:wi'j}, and~\eqref{eq:wij}, we obtain that for $i' = 1,\ldots,m$, 
\begin{equation}\label{eq:1:04pmdec012015}
H_{p_i} w_{i'_j} |_{V_i} = \delta_{i' i}\, \lambda_{i_j} \, v_{i_j}, \hspace{10pt} \forall\, j = 1, \ldots, |E_i|,
\end{equation}
where $\delta_{i'i}$ is the Kronecker delta. 
Now, for each vector $v_{i_j} \in \R^{2|V_i|}$, we define a vector $\wh v_{i_j} \in \R^{2n}$ by adding zero entries into $v_{i_j}$: the zero entries are added in a way such that 
$
\wh v_{i_j} |_{V_i} = v_{i_j} 
$. Following~\eqref{eq:1:04pmdec012015} and the relationship between $\wh H_{p_i}$ and $H_{p_i}$, we obtain
$$
\wh H_{p_i} \, w_{i'_j} = \delta_{i'i} \, \lambda_{i_j}\, \wh v_{i_j}, \hspace{10pt} \forall\, j = 1, \ldots, |E_i|.
$$
Further, by the fact that $H_{p} = \sum^m_{i=1} \wh H_{p_i}$,   we have
\begin{equation}\label{eq:thestepbeforethelaststep}
H_p w_{i_j} = 
\left\{
\begin{array}{ll}
\lambda_{i_j} \, \wh v_{i_j} & \mbox{if } i \ge 1, \\
0  & \mbox{if } i = 0. 
\end{array}\right. 
\end{equation}
To compute $W^\top H_p W$,  we now evaluate $\langle w_{i'_{j'}}, \, H_p w_{i_j} \rangle$. First, note that if $i > 0$, then
\begin{equation}\label{eq:thelaststepreally}
\langle w_{i'_{j'}}, \,  \wh v_{i_j}  \rangle =   \langle w_{i'_{j'}} |_{V_{i}}, \, v_{i_j}  \rangle = \delta_{i'i} \langle v_{i_{j'}}, \, v_{i_j} \rangle = \delta_{i'i}\delta_{j'j}.  
\end{equation}
The first equality holds because $\wh v_{i_j}$ is defined by adding zero entries to $v_{i_j}$; the second equality holds because $v_{i_j}\in N_{p_i}O_{p_i}$ and 
by~\eqref{eq:w0j},~\eqref{eq:wi'j}, and~\eqref{eq:wij}, 
$$
w_{i'_{j'}} |_{V_{i}} = 
\left\{
\begin{array}{ll}
v_{i_{j'}} & \mbox{if } i' = i, \vspace{3pt}\\
\in T_{p_i}O_{p_i} &  \mbox{if } i' \neq i;
\end{array}
\right.
$$ 
The last equality holds because $v_{i_1},\ldots, v_{i_{|E_i|}}$ are orthonormal. 
Then, combining~\eqref{eq:thestepbeforethelaststep} with~\eqref{eq:thelaststepreally}, we obtain
$$
\langle w_{i'_{j'}}, \, H_p w_{i_j} \rangle = \lambda_{i_j} \delta_{i'i} \delta_{j'j},
$$
which implies that $W^\top H_p W = \Lambda$. 
\qquad \endproof


\subsection{Proof of Proposition~\ref{pro:signatureofFp}}
The proof will be carried out by induction on the number of vertices of~$G$. 
For the base case $n = 3$, we assume, without loss of generality, that agent $x_1$ lies in between $x_2$ and $x_3$, and $a_2 < a_1 < a_3$. First, we show that 
\begin{equation}\label{eq:triangleineq1}
d_{12} = a_{1} - a_2 < \ol d_{12} \hspace{5pt} \mbox{ and } \hspace{5pt} d_{13} = a_{3} - a_1 < \ol d_{13}.
\end{equation}
Suppose that, to the contrary,~\eqref{eq:triangleineq1} does not hold; without loss of generality, we assume that $d_{12} \ge \ol d_{12}$. Note that $p$ is an equilibrium, and hence the dynamics of $x_1$ is zero at $p$:
\begin{equation}\label{eq:dota1equalzero}
 \dot a_1 = f_{12}(d_{12})(a_2 - a_1) + f_{13}(d_{13}) (a_3 - a_1) = 0.
\end{equation}
Since $d_{12}  \ge \ol d_{12}$, from condition C1 in Definition~\ref{def:interactionfunction}, we have $f_{12}(d_{12}) \ge 0$. This, in particular, implies that
$$
f_{13}(d_{13}) = \frac{a_1 -a_2}{a_3 - a_1} f_{12}(d_{12}) \ge 0.
$$
Again, by condition C1 in Definition~\ref{def:interactionfunction}, we have $d_{13} \ge \ol d_{13}$. Then, by the strict triangle inequalities associated with $G$, we obtain
$$
d_{23} = d_{12} + d_{13} \ge \ol d_{12} + \ol d_{13} > \ol d_{23},
$$
and hence, $f_{23}(d_{23}) > 0$. But then, the dynamics of $x_2$ is nonzero at $p$; indeed, we have
$$
 \dot a_2 = f_{12}(d_{12}) (a_1 - a_2) + f_{23}(d_{23}) (a_3 - a_2) > 0,
$$
which is a contradiction. We have thus established~\eqref{eq:triangleineq1}. Following~\eqref{eq:triangleineq1}, we have
$$
F_{p, 11} = f_{12}(d_{12}) + f_{13}(d_{13}) < 0,
$$
which implies that $N_-(F_p) \ge 1$. 

For the inductive step, assuming that Proposition~\ref{pro:signatureofFp} holds for $n < k$, we prove it for $n = k$. Since $G$ is a triangulated Laman graph, there exists a vertex, say vertex~$k$, of degree~$2$. Without loss of generality, we assume that vertex~$k$ is adjacent to vertices~$i$ and~$j$. There are two cases to consider: 

{\it Case I}. Suppose that $x_k$ lies in between $x_i$ and $x_j$; without loss of generality, we can assume that 
\begin{equation}\label{eq:aiakaj}
a_i < a_k < a_j.
\end{equation}
 Since $p$ is an equilibrium, the dynamics of $x_k$ is zero at $p$, i.e., 
\begin{equation}\label{eq:akzerofirst}
\dot a_k = f_{ik}(d_{ik})(a_i - a_k) + f_{jk}(d_{jk}) (a_j - a_k) = 0.
\end{equation}
We may assume that 
\begin{equation}\label{eq:dijoldij}
d_{ik} \ge \ol d_{ik} \hspace{5pt} \mbox{ and } \hspace{5pt} d_{jk} \ge \ol d_{jk}.
\end{equation}
because otherwise, say $d_{ik} < \ol d_{ik}$, then we have $f_{ik}(d_{ik}) < 0$, and hence from~\eqref{eq:akzerofirst}, we obtain $f_{jk}(d_{jk}) < 0$, which implies that  $$F_{p,kk} = f_{ik}(d_{ik}) + f_{jk}(d_{jk})< 0,$$ and hence $N_-(F_p) \ge 1$. 
We thus assume that~\eqref{eq:dijoldij} holds, and state below some implications of it. First, by combining~\eqref{eq:aiakaj},~\eqref{eq:akzerofirst}, and~\eqref{eq:dijoldij}, we obtain 
\begin{equation}\label{eq:d12f12d13f13}
f_{ik}(d_{ik})(a_k -a_i) =  f_{jk}(d_{jk}) (a_j -a_k) \ge 0.
\end{equation}
Also, note that by the strict triangle inequalities associated with $G$, we have
\begin{equation}\label{eq:d23greaterthanold23}
d_{ij} = d_{ik} + d_{ik} \ge \ol d_{ik} + \ol d_{jk} > \ol d_{ij},
\end{equation}
which implies that $f_{ij}(d_{ij}) > 0$. 

Let $G^*= \(V^*, E^*\)$ be the subgraph of $G$ induced by the vertices $V^* := \{1,\ldots, k - 1\}$, and $(G^*, p^*)$ be the corresponding framework.
We recall that  a reduction of system~\eqref{MODEL}, denoted by $\cal{R}$, for $(G, p)$ is defined as follows:  First,  choose a function $g_{ij}$ in $\operatorname{C}^1(\R_+,\R)$ such that the value of $g_{ij}$ at $d_{ij}$ satisfies the following condition:
\begin{equation}\label{eq:recallg23f12}
g_{ij}(d_{ij})(a_j - a_i) = f_{ik}(d_{ik})(a_k - a_i) = f_{jk}(d_{jk}) (a_j -a_k).
\end{equation}
We then obtain $\cal{R}$, from system~\eqref{MODEL}, by first removing agent $x_k$ out of~\eqref{MODEL}, and then setting the control laws $f^*_{i'j'}$, for $(i',j') \in E^*$, as follows:
\begin{equation*}
f^*_{i'j'} = 
\left\{
\begin{array}{ll}
f_{ij} + g_{ij} & \mbox {if } (i',j') = (i,j), \\
f_{i'j'} & \mbox{otherwise}. 
\end{array}
\right.
\end{equation*}
Furthermore, note that by following~\eqref{eq:dijoldij} and its implications, one can choose $g_{ij}$ such that 
$dg_{ij}(d)$ is non-decreasing in~$d$, with $g_{ij}(\ol d_{ij}) = 0$; indeed, from~\eqref{eq:d23greaterthanold23}, we have $d_{ij}> \ol d_{ij}$, and from~\eqref{eq:d12f12d13f13} and~\eqref{eq:recallg23f12}, we have
$
d_{ij}g_{ij}(d_{ij}) \ge 0
$. 
Then, by this choice of $g_{ij}$, the control law $f^*_{ij}$ is a monotone attraction/repulsion function, 
with $f^*_{ij}(\ol d_{ij}) = 0$. 

We now appeal to the induction hypothesis. First, note that $p$ is an equilibrium of system~\eqref{MODEL}, and hence from Lemma~\ref{lem:systemreduction}, $p^*$ is an equilibrium of $\cal{R}$. Let $F_{p^*} \in\R^{(k-1)\times(k-1)}$ be a symmetric matrix defined as follows: the off-diagonal entries of $F_{p^*}$ are given by
\begin{equation}\label{eq:Fstarp}
F^*_{p, ij}:=
\left\{
\begin{array}{ll}
-f^*_{ij}(d_{ij}) & \text{if }(i,j)\in E^*, \\
0 & \text{otherwise}. 
\end{array}
\right.
\end{equation}
and the diagonal entries of $F_{p^*}$ are determined by the condition that the rows/columns of $F_{p^*}$ sum to zero. 
Note that the formation control system $\cal{R}$ satisfies the assumptions of Theorem~\ref{MAIN}; indeed, all the control laws $f^*_{i'j'}$, for $(i',j')\in E^*$, are in $\mathcal{F}$, with $f^*_{i'j'}(\ol d_{i'j'}) = 0$, and moreover, the target distances $\ol d_{i'j'}$, for $(i',j') \in E^*$, satisfy the strict triangle inequalities associated with $G^*$. Hence, we can apply the induction hypothesis, and obtain $N_-(F_{p^*}) \ge 1$. 

We now relate $N_-(F_{p^*})$ to $N_-(F_p)$. Let $v = \(v_1, \ldots, v_{k-1}\)$ be an eigenvector of $F_{p^*}$ corresponding to an eigenvalue $\lambda$ with $\lambda < 0$. Define a vector $u\in \R^k$ by setting $u:=\(v_1,\ldots, v_k \)$, with the scalar $v_k$ given by
\begin{equation}\label{eq:defv0foru}
v_k := 
\frac{(a_j - a_k)v_{i} + (a_k - a_i)v_{j} }{a_j - a_i}.
\end{equation}
We now show  that 
$
F_p \, u = \lambda \(v, 0\)
$. 
Let $\{r_i\}^k_{n=1}$ and $\{r^*_i\}^{k-1}_{n=1}$ be the row vectors of $F_p$ and of $F_{p^*}$, respectively. Note that if $l \in V - \{i,j,k\}$, then $r_l = (r^*_l, 0)$, and hence $r^\top_l u = r^{*\top}_l v$. For the case $l= i$, we have
$$
r^\top_i u = \sum_{j'\in \cal{N}_i} f_{ij'}(d_{ij'}) (v_i - v_{j'}); 
$$ 
using the facts that $\{j,k\}\subseteq \cal{N}_i$ and
$$
f_{ij}(d_{ij})(v_i - v_j) + f_{ik}(d_{ik})(v_i - v_k) = f^*_{ij}(d_{ij})(v_i - v_j),  
$$ 
we obtain
$$
r^\top_i u = \sum_{j' \in \cal{N}^*_i} f^*_{ij'} (v_i - v_j) = r^{*\top}_i v. 
$$
The same arguments above can be used to prove that $r^\top_j u = r^{*\top}_j v$. 
For the case $l = k$, we need to prove that $r^\top_k u = 0$. First, note that
\begin{equation}\label{eq:r1u}
r^\top_k u =  f_{ik}(d_{ik})(v_k - v_i)+f_{jk}(d_{jk})(v_k - v_j);
\end{equation}
by computation, the right hand side of~\eqref{eq:r1u} yields
$$
\frac{v_i - v_j}{a_j - a_i}\(f_{ik}(d_{ik})(a_i-a_k) + f_{jk}(d_{jk})(a_j - a_k)\), 
$$
which is zero by~\eqref{eq:d12f12d13f13}. 
We have thus shown that $F_p \, u = \lambda( v, 0)$, and hence $$u^\top F_p \, u = \lambda \|v\|^2< 0,$$  
 which implies that $N_-(F_p) \ge 1$.

{\it Case II}. Suppose that $x_k$ does not lie in between $x_i$ and $x_j$; without loss of generality, we assume that $ a_i < a_j < a_k$. The analysis in this case will be similar to the one in the previous case. First, note that the dynamics of $x_k$ is zero, and hence 
$$
f_{ik}(d_{ik}) (a_k - a_i) + f_{jk}(d_{jk}) (a_k - a_j) = 0.  
$$ 
Suppose that $$f_{ik}(d_{ik}) (a_k - a_i) = -f_{jk}(d_{jk}) (a_k - a_j) >  0;$$ 
then, by the fact that  $d_{ik} > d_{jk} > 0$, we have 
$$
F_{p, kk} = f_{ik}(d_{ik}) + f_{jk}(d_{jk}) < 0,
$$
and hence $N_-(F_p) \ge 1$. We thus assume that 
\begin{equation}\label{eq:usefulassumption}
 f_{ik}(d_{ik}) (a_k - a_i) = -f_{jk}(d_{jk}) (a_k - a_j) \le  0
\end{equation}
Some implications of~\eqref{eq:usefulassumption} are stated below. First, note that 
$
 d_{ik} \le \ol d_{ik}$ and $d_{jk} \ge \ol d_{jk}$.  
Also, note that by the strict triangle inequalities, we have
\begin{equation}\label{eq:dijlessthanoldij}
d_{ij} = d_{ik} - d_{jk} \le \ol d_{ik} - \ol d_{jk} < \ol d_{ij}, 
\end{equation}
which implies that $f_{ij}(d_{ij}) < 0$. 

Let $G^* = (V^*, E^*)$ be the subgraph induced by $V^* = \{1,\ldots, k-1\}$, and $(G^*, p^*)$ be the corresponding framework. Similarly, let $\cal{R}$ be a reduction of system~\eqref{MODEL} for $(G,p)$ defined as follows: First, remove $x_k$ out of system~\eqref{MODEL}, and choose a $g_{ij} \in \operatorname{C}^1(\R_+,\R)$ such that 
\begin{equation}\label{eq:crying}
g_{ij}(d_{ij})(a_j - a_i) = f_{ik}(d_{ik})(a_k - a_i) = f_{jk}(d_{jk})(a_j - a_k).
\end{equation} 
Then, set the control laws $f^*_{i'j'}$, for $(i',j') \in E^*$, by
\begin{equation*}
f^*_{i'j'} = 
\left\{
\begin{array}{ll}
f_{ij} + g_{ij} & \mbox {if } (i',j') = (i,j), \\
f_{i'j'} & \mbox{otherwise}. 
\end{array}
\right.
\end{equation*}
We again note that from~\eqref{eq:usefulassumption},~\eqref{eq:dijlessthanoldij}, and~\eqref{eq:crying}, the function $g_{ij}$ can be chosen such that $dg_{ij}(d)$ monotonically increases in~$d$, with $\ol d_{ij} g_{ij}(\ol d_{ij}) = 0$; indeed, from~\eqref{eq:dijlessthanoldij}, we have $d_{ij} < \ol d_{ij}$, and from~\eqref{eq:usefulassumption} and~\eqref{eq:crying}, we have
$
d_{ij} g_{ij}(d_{ij}) \le 0 
$. 
 With the choice of $g_{ij}$, the formation control system $\cal{R}$ satisfies the assumptions of Theorem~\ref{MAIN}. We thus appeal again to the induction hypothesis. Specifically,   
 let $F_{p^*}\in \R^{(k-1)\times (k-1)}$ be defined in the same way as in~\eqref{eq:Fstarp}. Since $p^*$ is an equilibrium of $\cal{R}$ (by Lemma~\ref{lem:systemreduction}), we obtain  $N_-(F_{p^*}) \ge 1$.  Let $v $ be an eigenvector of $F_{p^*}$ corresponding to an  eigenvalue $\lambda$ with $\lambda < 0$. Define $u\in \R^k$ by setting $u := (v, v_k)$, with the scalar $v_k$ defined as in~\eqref{eq:defv0foru}. Then, by the same arguments, we have $F_p u= \lambda(v,0)$, and hence  
$$u^\top F_p \, u = \lambda \|v\|^2 < 0,$$ which implies that $N_-(F_p) \ge 1$. This completes the proof. 
\qquad\endproof

\end{document}